\theoremstyle{plain}
\newtheorem{thm}{Theorem}[section]
\newtheorem{lm}[thm]{Lemma}
\newtheorem{prop}[thm]{Proposition}
\newtheorem{cor}[thm]{Corollary}
\newtheorem{fact}[thm]{Fact}
\theoremstyle{definition}
\newtheorem{defn}[thm]{Definition}
\newtheorem{eg}[thm]{Example}
\theoremstyle{remark}
\newtheorem{remark}[thm]{Remark}
\newtheorem*{thank}{Acknowledgments}
\newcommand{\nc}{\newcommand}
\def\makeop#1{\expandafter\def\csname#1\endcsname
  {\mathop{\rm #1}\nolimits}\ignorespaces}
\def\makebb#1{\expandafter\def
  \csname bb#1\endcsname{{\mathbb{#1}}}\ignorespaces}
\def\makebf#1{\expandafter\def\csname bf#1\endcsname{{\bf
      #1}}\ignorespaces} 
\def\makegr#1{\expandafter\def
  \csname gr#1\endcsname{{\mathfrak{#1}}}\ignorespaces}
\def\makescr#1{\expandafter\def
  \csname scr#1\endcsname{{\EuScript{#1}}}\ignorespaces}
\def\makecal#1{\expandafter\def\csname cal#1\endcsname{{\mathcal
      #1}}\ignorespaces} 
\def\doLetters#1{#1A #1B #1C #1D #1E #1F #1G #1H #1I #1J #1K #1L #1M
                 #1N #1O #1P #1Q #1R #1S #1T #1U #1V #1W #1X #1Y #1Z}
\def\doletters#1{#1a #1b #1c #1d #1e #1f #1g #1h #1i #1j #1k #1l #1m
                 #1n #1o #1p #1q #1r #1s #1t #1u #1v #1w #1x #1y #1z}
     \def\qed{\qedmark\medbreak}%
\def\qedmark{{\enspace\vrule height 6pt width 5pt depth 1.5pt}}%
    \def\setminus{\smallsetminus}
\def\Fpbar{\overline{\bbF}_p}
\def\Fp{{\bbF}_p}
\def\Qpbar{\overline{{\bbQ}_p}}
\def\Qp{{\bbQ}_p}
\def\Zp{{\bbZ}_p}
\def\Qbar{\overline{\bbQ}}
\newcommand{\Z}{\mathbb Z}
\newcommand{\C}{\mathbb C}
\newcommand{\A}{\mathbb A}    
\newcommand{\F}{\mathbb F}
\newcommand{\<}{\langle}   
\renewcommand{\>}{\rangle} 
\newcommand{\isoto}{\stackrel{\sim}{\to}}
\nc{\embed}{\hookrightarrow}
\newcommand{\dieu}{Dieudonn\'{e} }
\nc{\ol}{\overline}
\nc{\wt}{\widetilde}
\nc{\opp}{\mathrm{opp}}
\def\ul{\underline}
\newcommand\gfrac[2]{\genfrac{}{}{0pt}{}{#1}{#2}}
\newcommand\Mlin{M^{\rm lin}}
\newcommand\Msympl{M^{\rm sympl}}
\newcommand\Flag{\mathcal Flag}
\newcommand\isomarrow{\stackrel{\cong}{\longrightarrow}}
\newcommand\stdp{\Lambda}
\newcommand\stdt{\lambda}
\newcommand\Iw{\mathcal I}
\begin{document}
\renewcommand{\thefootnote}{\fnsymbol{footnote}}
\setcounter{footnote}{-1}
\numberwithin{equation}{section}

\title[Kottwitz-Rapoport strata and Deligne-Lusztig
varieties]{Supersingular Kottwitz-Rapoport strata\\and Deligne-Lusztig varieties}
\author{Ulrich G\"ortz}
\address[G\"ortz]{
Mathematisches Institut\\
Beringstr.~1\\
53115 Bonn\\
Germany}
\email{ugoertz@math.uni-bonn.de}
\thanks{G\"{o}rtz was partially supported by a Heisenberg grant and by the
SFB/TR 45 ``Periods, Moduli Spaces and Arithmetic of Algebraic Varieties''
of the DFG (German Research Foundation)}. 
\author{Chia-Fu Yu}
\address[Yu]{
Institute of Mathematics \\
Academia Sinica \\
128 Academia Rd.~Sec.~2, Nankang\\ 
Taipei, Taiwan \\ and NCTS (Taipei Office)}
\email{chiafu@math.sinica.edu.tw}
\thanks{Yu was partially supported by a NSC grant 
NSC 96-2115-M-001-001}.

\begin{abstract}
We investigate the special fibers of Siegel modular varieties with Iwahori
level structure. On these spaces, we have the Newton stratification, and
the Kottwitz-Rapoport stratification; one would like to understand how these
stratifications are related to each other. We give a simple description of
all KR strata which are entirely contained in the
supersingular locus as disjoint unions of Deligne-Lusztig varieties. We
also give an explicit numerical description of the KR stratification in
terms of abelian varieties.
\end{abstract}
 
\maketitle


\section{Introduction}
\label{sec:01}

Fix a prime number $p$ and an integer $g \ge 1$. The moduli space $\mathcal
A_g$ of principally polarized abelian varieties is an important variety
which has received a lot of attention over the last decades. In this paper
we are mainly concerned with a variant, the Siegel modular variety
$\mathcal A_{g,I}$ (which we usually abbreviate to $\mathcal A_I$) with
Iwahori level structure at $p$, which is much less well understood.  By
definition, $\mathcal A_I$ is the moduli space of the isomorphism
classes of chains of abelian varieties
\[
(A_0 \rightarrow A_1 \rightarrow \cdots \rightarrow A_g, \lambda_0,
\lambda_g, \eta),
\]
where the $A_i$ are abelian varieties of dimension $g$, the maps $A_i
\rightarrow A_{i+1}$ are isogenies of degree $p$, $\lambda_0$ and
$\lambda_g$ are principal polarizations of $A_0$ and $A_g$, respectively,
such that the pull-back of $\lambda_g$ is $p\lambda_0$, and $\eta$ is a
level structure away from $p$. See Section \ref{sec:02} for the precise
definition. We consider these spaces exclusively in positive
characteristic, i.~e.~over $\mathbb F_p$ or an algebraic closure $k \supset
\mathbb F_p$. The same definition makes sense over the ring $\mathbb Z_p$
of $p$-adic integers, and in particular in characteristic zero. In fact,
the motivation to study these spaces in positive characteristic is to
obtain arithmetic properties of the corresponding spaces over $\mathbb Q$.

We consider the following two stratifications of the space $\mathcal A_I$: The
Newton stratification is given by the isogeny type of the underlying
$p$-divisible groups of the abelian varieties in a chain as above. We are
particularly interested in the supersingular locus, i.~e.~the closed subset
of points $(A_i, \lambda_0, \lambda_g, \eta)$, where all the $A_i$ are
supersingular. Although for a general Newton stratum there is little hope
to achieve an explicit geometric description, in the case of the
supersingular locus
one can be more optimistic. Note that here we use the term
``stratification'' in the very loose sense that $\mathcal A_I$ is the
disjoint union of locally closed subsets; it is not true in general that
the closure of a stratum is a union of strata.

Similarly, one has the Newton stratification of the space $\mathcal A_g$.
In this case, the closure of each stratum is a union of strata. There are a
number of results describing the geometry of the supersingular locus in
$\mathcal A_g$. For instance, it was proved by Li and Oort \cite{li-oort}
that the dimension of the supersingular locus is $[g^2/4]$. There is also a
formula for the number of irreducible components, in terms of a certain
class number. As further references, besides \cite{li-oort} and the
references given there, we mention the articles \cite{koblitz:thesis} by
Koblitz, and \cite{yu:ss_siegel} by the second author.

On the other hand, in the Iwahori case, which is the case considered here,
currently very little is known. Even the dimension of the supersingular
locus is known only for $g\le 3$ (but our results in this paper and in
\cite{goertz-yu:kreo} prove that for even $g$ it is $g^2/2$). Note that
the situation here is definitely more complicated than in the case of good
reduction; as an example, in the case $g=2$, the supersingular locus
coincides with the $p$-rank $0$ locus, but it is not contained in the
closure of the $p$-rank $1$ locus. In addition, it is not
equi-dimensional (see \cite[Prop.~6.3]{yu:prank}).
Nevertheless, a better understanding of the geometric structure of the
supersingular locus seems within reach, and is clearly an interesting goal.

The second stratification is the Kottwitz-Rapoport stratification (KR
stratification)
\[
\mathcal A_I = \coprod_{x\in {\rm Adm}_I(\mu)} \mathcal A_{I,x}
\]
by locally closed subsets, which should be thought of as a stratification
by singularities (see Remark \ref{strat_by_sing}). It corresponds to the
stratification by Schubert cells of the associated local model. In terms of
abelian varieties, we can express this as follows: the strata are the loci
where the relative position of the chain of de Rham cohomology groups
$H^1_{DR}(A_i)$ and the chain of Hodge filtrations
$\omega(A_i) \subset H^1_{DR}(A_i)$ is constant. The KR stratification on the space $\mathcal A_g$
consists of only one stratum, and hence does not provide any interesting
information. See Section \ref{sec:02} for a reminder on the definition of
the KR stratification and on the set of strata, the so-called
$\mu$-admissible set ${\rm Adm}_I(\mu)$.  We give the following explicit
characterization of KR strata (see Corollary \ref{cor_num_char}):

\begin{thm}
Let $(A_i)_i$, $(A'_i)_i$ be $k$-valued points of $\mathcal A_I$.
Denote by $\alpha_{ji}$ the natural map $H^1_{DR}(A_i) \rightarrow
H^1_{DR}(A_j)$ and by $^\perp$ the orthogonal complement inside $H^1(A_0)$
with respect to the pairing induced by the principal polarization on $A_0$,
and similarly for the chain $(A'_i)_i$.

Then the points $(A_i)_i$, $(A'_i)_i$ lie in the same KR stratum if
and only if 
for all $0 \le j< i\le g$, one has
\begin{eqnarray*}
&& \dim \omega(A_j) / \alpha_{ji}(\omega(A_i)) =
 \dim \omega(A'_j) / \alpha'_{ji}(\omega(A_i)),
   \\
&& \dim H^1_{DR}(A_j)/(\omega(A_j) + \alpha_{ji}(H^1_{DR}(A_i))) =
 \dim H^1_{DR}(A'_j)/(\omega(A'_j) + \alpha'_{ji}(H^1_{DR}(A'_i))),
\end{eqnarray*}
and for all all $0 \le i, j\le g$,
\[ \dim \alpha_{0i}(\omega(A_i)) + \alpha_{0j}(H^1_{DR}(A_j))^\perp =
 \dim \alpha'_{0i}(\omega(A'_i)) +
 \alpha'_{0j}(H^1_{DR}(A'_j))^\perp. \]
\end{thm}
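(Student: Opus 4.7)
The plan is to reduce the corollary to a purely linear-algebraic classification problem about the local model, using the preceding main theorem of the paper. Recall that, as mentioned in the introduction, the KR stratum of a point $(A_\bullet,\lambda_0,\lambda_g,\eta)\in\mathcal A_I(k)$ depends only on the relative position of the chain of Hodge filtrations $\omega(A_\bullet)$ inside the chain $H^1_{DR}(A_\bullet)$, equipped with the perfect alternating form on $H^1_{DR}(A_0)$ induced by $\lambda_0$ and the polarization-compatibility conditions at the two ends. Two points therefore lie in the same KR stratum if and only if the two pairs
\[
\bigl(H^1_{DR}(A_\bullet),\,\omega(A_\bullet)\bigr)\quad\text{and}\quad\bigl(H^1_{DR}(A'_\bullet),\,\omega(A'_\bullet)\bigr)
\]
are isomorphic as chains of $k$-vector spaces equipped with a Lagrangian sub-chain, a symplectic form at index $0$, and the analogous datum at index $g$. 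The task is then to show that such chain isomorphism classes are classified by the three lists of dimensions appearing in the statement.

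The first step is to check that the invariants are well-defined on isomorphism classes: this is immediate, since each dimension is computed from the pair by applying $\alpha_{ji}$, taking sums/intersections, quotients, or the $\lambda_0$-orthogonal complement, all of which are preserved by an isomorphism of pairs. The second step, which is the heart of the matter, is completeness: given two pairs with the same invariants, one must exhibit an isomorphism between them. Here it is natural to identify KR strata with the admissible set $\mathrm{Adm}_I(\mu)\subset\widetilde W$ (the extended affine Weyl group of $GSp_{2g}$) and to use the standard representative of each $w\in\mathrm{Adm}_I(\mu)$—a lattice chain in $k((t))^{2g}$ determined by $w$—as a normal form. One then reads off from $w$ the three families of numerical invariants and verifies that $w$ can be reconstructed from them. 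The two families indexed by $0\le j<i\le g$ control the relative position of $\omega(A_j)$ with respect to both $\alpha_{ji}(\omega(A_i))$ and $\alpha_{ji}(H^1_{DR}(A_i))$ inside $H^1_{DR}(A_j)$; together these determine the image of $w$ in the affine Weyl group of $GL_{2g}$. The third family, involving the orthogonal complements $\alpha_{0j}(H^1_{DR}(A_j))^\perp$ in $H^1_{DR}(A_0)$, contributes exactly the additional data needed to refine the $GL_{2g}$-orbit to a $GSp_{2g}$-orbit.

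The main obstacle is this completeness step: one must show that the (evidently necessary) invariants listed are also sufficient, i.e.\ that no finer invariant is needed to distinguish KR strata. The GL-part is comparatively straightforward, since the Bruhat decomposition for the affine flag variety of $GL_{2g}$ reduces the question to counting coincidences of adjacent lattices, which is precisely what the first two families record. The symplectic refinement is more delicate; the natural strategy is to argue by induction on the length of $w$ in $\widetilde W$, using the compatibility between the $^\perp$-invariants indexed by $(i,j)$ and $(j,i)$ (forced by the self-duality of the lattice chain) to reduce to the GL-case after a diagonal normalization. Once the reconstruction of $w$ from the invariants is in hand, the corollary follows directly from the identification of KR strata with elements of $\mathrm{Adm}_I(\mu)$.
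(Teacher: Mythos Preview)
Your setup is fine, but your division of labor among the three families is mistaken, and this leads you to manufacture a difficulty that does not exist. You claim the first two families determine the image of $w$ in $\widetilde W_{GL_{2g}}$ and the third provides the symplectic refinement. In the paper's framework, an extended alcove $x=(x_0,\dots,x_{2g-1})$ is encoded by integers $r_{ab}(x)=\sum_{k=b+1}^{a}(\omega_a(k)-x_a(k)+1)$, and the identity $x_a(b)=r_{ab}-r_{a,b-1}+\omega_a(b)+1$ shows at once that the full array $(r_{ab})$ recovers $x$. Translating through $r_{ab}=\dim(\mathcal L_a+\lambda_{b'})/\lambda_{b'}$, the first two families of the statement give exactly those $r_{ab}$ with \emph{both} $a$ and $b$ in $\{g,\dots,2g\}$, while the third family gives the $r_{ab}$ with $a\in\{g,\dots,2g\}$ and $b\in\{0,\dots,g\}$. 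So the third family is not a symplectic correction at all: it supplies a range of indices (namely $b$ in the lower half) that the first two families never touch, and without which even the $GL_{2g}$-alcove is undetermined. Symplectic duality enters only in the mild form that knowing $r_{ab}$ for $a\ge g$ already suffices, since $x_a$ and $x_{2g-a}$ determine one another.

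Once this is understood, your ``main obstacle'' evaporates: reconstruction of $x$ from the $r_{ab}$ is a one-line formula, and no induction on $\ell(w)$ or separate symplectic argument is needed. The actual work in the paper's proof is elsewhere---the careful translation of the $r_{ab}$ (defined on the standard lattice chain in the affine flag variety) into the de Rham/Hodge quantities of the statement, via the identification $H^1_{DR}(A_i)\cong\lambda_{-i}/t\lambda_{-i}$ together with periodicity and duality. Your proposal omits this translation entirely and expends its effort on the wrong step.
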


There is an explicit formula for these values on the stratum associated
with a given element of the $\mu$-admissible set; see Section~\ref{sec:02}.

The relationship between the Newton stratification and the KR
stratification is complicated.
In general neither of these stratifications is a
refinement of the other one. Nevertheless, there are some relations between
them. For instance, the ordinary Newton stratum (which is open and dense in
$\mathcal A_I$) is precisely the union of the maximal KR strata.  At the
other extreme, the supersingular locus is not in general a union of KR
strata. However, it is our impression that those KR strata which are
entirely contained in the supersingular locus make up a significant part of
it. We call these KR strata \emph{supersingular}. In Section
\ref{sec:superspecial_strata} we specify the subset of \emph{superspecial}
strata inside the set of all KR strata. These strata are supersingular by
definition, and in \cite{goertz-yu:kreo} we prove that the set of
superspecial KR strata coincides with the set of supersingular KR strata
(see Theorem \ref{thm_ss_KR_strata}). In Section
\ref{sec:structure_ssp_strata} we give a very simple geometric description
of the superspecial KR strata in
terms of Deligne-Lusztig varieties. 

According to our definition (Definition~\ref{43}), we define first
what an {\it $i$-superspcial} KR stratum, for $0\le i\le [\frac g2]$, is, 
and then call a KR stratum {\it superspecial} if it is 
$i$-superspecial for some integer $0\le i\le [\frac g2]$.
To give an idea, let us consider the case $i=0$ to simplify the notation.
We call a KR stratum \emph{$0$-superspecial}, if
for one (equivalently: all) chain $(A_i)_i$ lying in this stratum, $A_0$
and $A_g$ are superspecial, and the isogeny $A_0\rightarrow A_g$ given by
the chain is isomorphic to the Frobenius morphism $A_0\rightarrow
A_0^{(p)}$. Fix such a chain of abelian varieties, and denote by $G'$ the
automorphism group scheme of the principally polarized abelian variety
$(A_0,\lambda_0)$, i.~e.~$G'(R) = \{\,x\in (\End(A_0)\otimes R)^\times\ ;\
x' x=1\, \}$, where $'$ is the Rosati involution for $\lambda_0$. We
consider the base change of $G'$ over $\mathbb Q_p$; it is an inner form
of the derived group of $G=GSp_{2g}$. Let $\overline{G}'$ be the  
quasi-split unitary group which arises as the maximal
reductive quotient of the special fiber of the Bruhat-Tits group scheme for
the maximal parahoric subgroup of $G'$ which is the stabilizer of the
``vertex'' $\{0, g\}$ of the base alcove.
As indicated above, we parametrize the KR strata by the admissible set
${\rm Adm}_I(\mu)$, a finite subset of the extended affine Weyl
group. There is a unique 
element $\tau$ of length $0$ such that ${\rm Adm}_I(\mu) \subset W_a\tau$,
where $W_a$ is the affine Weyl group, a Coxeter group generated by simple
reflections $s_0, \dots, s_g$. In particular, $\mathcal A_\tau$ is the
unique $0$-dimensional KR stratum; it is contained in the closure of every
KR stratum. We have that $w \tau \in {\rm Adm}(\mu)$ gives rise to a
$0$-superspecial stratum if and only if $w$ lies in $W_{\{0,g\}}$, 
where $W_{\{0,g\}}$ is the subgroup of $W_a$ generated by $s_1,
\dots, s_{g-1}$. The group $W_{\{0,g\}}$ is isomorphic to the
symmetric group $S_g$ on $g$ letters; we see it as the Weyl group of
the group $\overline{G}'$.
We have, by Proposition
\ref{maxl_ss_kr}, Theorem \ref{sskr_are_dl} and Corollary
\ref{desc_ss_kr_strata}: 

\begin{thm}
Let $w\in W_{\{0,g\}}$, such that $\mathcal A_{w\tau}$ is a
$0$-superspecial KR stratum. We have an isomorphism
\[
\mathcal A_{w\tau}  \isomarrow \coprod_{x \in \pi(\mathcal A_\tau)} X(w^{-1}),
\]
where $X(w^{-1})$ is the Deligne-Lusztig variety associated to $w^{-1}$ in
the flag variety of all Borel subgroups of $\overline{G}'$,
and $\pi$ is the projection $\mathcal A_I \rightarrow \mathcal A_g$, which
maps a chain $(A_i,\lambda_0,\lambda_g,\eta)$ to $(A_0,\lambda_0,\eta)$.
\end{thm}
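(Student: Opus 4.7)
The plan is to establish the stated isomorphism fiberwise over the projection $\pi$, combining a Dieudonn\'e-theoretic description of the fibers at superspecial points with the observation that, because $w\in W_{\{0,g\}}$, the affine Schubert cell indexed by $w\tau$ is ``classical'' once the $\{0,g\}$-vertex of the base alcove is fixed. Concretely I would first prove that $\pi(\mathcal A_{w\tau})\subseteq \pi(\mathcal A_\tau)$, and then that for each $x\in \pi(\mathcal A_\tau)$ the fiber $\pi^{-1}(x)\cap \mathcal A_{w\tau}$ is isomorphic to $X(w^{-1})$; taking the disjoint union over $\pi(\mathcal A_\tau)$ then yields the theorem.

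The first inclusion is immediate from the definition of a $0$-superspecial stratum together with Proposition~\ref{maxl_ss_kr}: for $(A_\bullet,\lambda_0,\lambda_g,\eta)\in \mathcal A_{w\tau}$, both $A_0$ and $A_g$ are superspecial, and the composite isogeny $A_0\to A_g$ is (isomorphic to) a Frobenius, so $(A_0,\lambda_0,\eta)$ arises as the starting point of a chain classified by $\tau$, obtained by taking successive Frobenius quotients. For the fiber, fix $x\in \pi(\mathcal A_\tau)$ and write $M_0$ for the covariant Dieudonn\'e module of $A_0[p^\infty]$, a superspecial lattice in the basic isocrystal $N$ whose $\Qp$-group of self-isogenies preserving $\lambda_0$ is the inner form $G'$. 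A $k$-point of $\pi^{-1}(x)$ then corresponds to a chain $M_0\supset M_1\supset\cdots\supset M_g=FM_0$ of Dieudonn\'e submodules of colength one, compatible with the polarization duality, and the KR condition defining $\mathcal A_{w\tau}$ translates, via the local-model description, into the requirement that the Iwahori-relative position of $(M_i)_i$ and $(FM_i)_i$ be $w\tau$. This identifies the fiber with the subset of an affine Deligne-Lusztig variety cut out by $w\tau$ for the basic element $b$ attached to $x$.

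The main obstacle, and crux of the proof, is to show that this affine piece coincides with the classical Deligne-Lusztig variety $X(w^{-1})$ in $\mathrm{Flag}(\overline{G}')$. Because $w\in W_{\{0,g\}}$, the Iwahori double coset $\mathcal I w\tau \mathcal I$ lies inside the parahoric subgroup $\mathcal P_{\{0,g\}}$ whose reductive quotient is $\overline{G}'$; correspondingly $\mathcal P_{\{0,g\}}/\mathcal I$ specializes to the classical flag variety of Borel subgroups of $\overline{G}'$, and a chain $M_\bullet$ as above is determined by its image $gB\in \mathrm{Flag}(\overline{G}')$. The relative-position condition in the affine flag variety, after absorbing the length-zero twist by $\tau$, becomes the classical Deligne-Lusztig condition cutting out $X(w^{-1})$; the occurrence of $w^{-1}$ rather than $w$ reflects the standard convention that turns ``position of $FM_\bullet$ relative to $M_\bullet$'' into its inverse. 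The technical heart of this step is to identify the Frobenius $\sigma$ induced on $\mathrm{Flag}(\overline{G}')$ by $F$ on $N$ with the $\mathbb F_p$-rational Frobenius of the quasi-split unitary form $\overline{G}'$; I would handle this by a Bruhat-Tits computation at the vertex $\{0,g\}$, matching the inner twist produced by the basic isocrystal at the superspecial point with the one defining $\overline{G}'$. Granting this identification, assembling the fiberwise isomorphisms yields Corollary~\ref{desc_ss_kr_strata}.
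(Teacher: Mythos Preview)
Your overall strategy---fiber the stratum over $\pi(\mathcal A_\tau)$ and identify each fiber with $X(w^{-1})$---is exactly the paper's, and your covering step $\pi(\mathcal A_{w\tau})\subseteq \pi(\mathcal A_\tau)$ is essentially the construction in Lemma~\ref{degeneration_to_ss}.

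The gap is in the fiber identification. You describe the fiber by sending a chain $(B_i)_i$ to the flag of Dieudonn\'e modules $M_0\supset M_1\supset\cdots\supset M_g=FM_0$ inside $M_0/FM_0$, and then interpret the KR condition as an affine Deligne--Lusztig condition. This gives a bijection on $k$-points, but the theorem asserts an isomorphism of \emph{schemes}, and the map you have written down is the wrong one for that purpose: it is purely inseparable. The paper flags exactly this point in the Remark following Proposition~\ref{maxl_ss_kr}: the morphism coming from the de Rham (equivalently, Dieudonn\'e) chain induces the zero map on tangent spaces. The paper's repair is to use the \emph{Hodge filtration} instead, sending $(B_i)_i$ to the flag of images $\alpha(\omega_i)$ inside $\omega_{j_0}/\omega_{j_1}$, and then to invoke Grothendieck--Messing theory to see that this map is an isomorphism on tangent spaces, hence a genuine isomorphism of varieties. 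Your affine-DL framing does not sidestep this: affine Deligne--Lusztig varieties are naturally $k$-point (or perfect-scheme) objects, and passing from ``same $k$-points as $X(w^{-1})$'' to ``isomorphic to $X(w^{-1})$ as a scheme'' requires precisely the crystalline deformation input you omit. Once the correct morphism is in place, the Lang-map computation in Theorem~\ref{sskr_are_dl} (which you sketch correctly, including the appearance of $w^{-1}$ and the identification of the twisted Frobenius with the $\mathbb F_p$-structure on $\overline{G}'$) finishes the argument.
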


We also determine the number of connected components of $\mathcal
A_{w\tau}$ (with $w$ as above); see Corollary \ref{num_conn_comp}.
As indicated above, both results carry over to the case of 
$i$-superspecial strata for any $i\in\{0,\dots, [g/2]\}$.

Note that the union of supersingular KR strata is an interesting subvariety
of $\mathcal A_I$, not only because it has such a nice description, and in
fact a description which links it to representation theory. From the point
of view of the trace formula, it makes sense to restrict to a set of KR
strata (which corresponds to the choice of a particular test function) and,
at the same time, to a certain Newton stratum (the latter corresponds to
making the index set of the sum smaller).
It also becomes apparent through the results of \cite{goertz-yu:kreo} that
among the set of all KR strata, the superspecial ones are singled out in
several ways. For instance, all KR strata which are not superspecial, are
connected. 

Maybe most important, although the union of the
supersingular KR strata is not all of the supersingular locus, we still get
a significant part. The following table backs this up for small $g$.
First, we have the following result on the dimension (Proposition
\ref{dim_sspKR}):

\begin{prop}
The dimension of the union of all superspecial KR strata is $g^2/2$, if $g$
is even, and $g(g-1)/2$, if $g$ is odd. There is a unique superspecial
stratum of this maximal dimension.
\end{prop}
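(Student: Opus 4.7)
The plan is to reduce the statement to a combinatorial maximisation using the earlier structure theorem. By Theorem~\ref{sskr_are_dl} (applied for each $i\in\{0,\dots,[g/2]\}$), any superspecial KR stratum $\mathcal A_{w\tau}$ is a disjoint union of copies of a Deligne--Lusztig variety $X(w^{-1})$ in the flag variety of the unitary group $\overline G'_i$, so
\[
\dim\mathcal A_{w\tau}=\dim X(w^{-1})=\ell(w^{-1})=\ell(w).
\]
Hence it suffices to maximise the length of $w\in W_i$, where $W_i$ is the finite Weyl group of $\overline G'_i$, over those $w$ with $w\tau\in\mathrm{Adm}_I(\mu)$; in particular, once we have checked that the longest element $w_0^{(i)}\in W_i$ yields an admissible element, we reduce to maximising $\ell(w_0^{(i)})$ in~$i$.

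Next I would identify $W_i$ as a parabolic subgroup of the affine Weyl group $W_a$ of type $\widetilde C_g$, extending the case $i=0$ recalled in the introduction (where $W_{\{0,g\}}=\langle s_1,\dots,s_{g-1}\rangle\cong S_g$). Since the two double bonds of $\widetilde C_g$ sit at the end nodes $s_0$ and $s_g$, deleting $s_i$ and $s_{g-i}$ decomposes the remaining diagram as follows: a single component of type $A_{g-1}$ if $i=0$; two components of type $C_i$ together with a central component of type $A_{g-2i-1}$ if $0<i<g/2$; and two components of type $C_{g/2}$ if $i=g/2$ and $g$ is even. Using the standard lengths of longest elements ($\binom{n}{2}$ in type $A_{n-1}$, $n^2$ in type $C_n$), this gives
\[
f(i):=\ell(w_0^{(i)})=\begin{cases} g(g-1)/2, & i=0,\\ 2i^2+\binom{g-2i}{2}, & 0<i<g/2,\\ g^2/2, & i=g/2\ (g\text{ even}).\end{cases}
\]

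A short expansion shows that in all three regimes the formula simplifies to the single strictly convex quadratic $f(i)=4i^2-(2g-1)i+g(g-1)/2$. Therefore $f$ attains its maximum on $\{0,\dots,[g/2]\}$ only at an endpoint. For $g$ even one finds $f(g/2)=g^2/2>g(g-1)/2=f(0)$, so the maximum is $g^2/2$, attained uniquely at $i=g/2$; for $g$ odd, $f((g-1)/2)=(g-1)^2/2<g(g-1)/2=f(0)$, so the maximum is $g(g-1)/2$, attained uniquely at $i=0$. Uniqueness of the top-dimensional stratum then follows from uniqueness of the longest element in a finite Coxeter group: in the maximising case, $W_i$ is either $S_g$ (odd $g$) or a product of two Weyl groups of type $C_{g/2}$ (even $g$), each having a unique longest element.

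The main obstacle I anticipate is verifying that $w_0^{(i)}\tau$ does lie in $\mathrm{Adm}_I(\mu)$ (so that the maximising stratum is nonempty and genuinely $i$-superspecial in the sense of Definition~\ref{43}), and ruling out the possibility that different pairs $(i,w)$ index the same top-dimensional stratum. Both points should follow from the explicit parametrisation of $\mathrm{Adm}_I(\mu)\cap W_i\tau$ and from the definition of $i$-superspecial strata recalled in the introduction, but they are the step that deserves the most care.
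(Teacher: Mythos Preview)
Your argument is correct and follows essentially the same route as the paper: identify $W_{\{i,g-i\}}$ as the parabolic subgroup of $W_a$ obtained by deleting $s_i$ and $s_{g-i}$, compute the length of its longest element as $2i^2+\binom{g-2i}{2}=4i^2-(2g-1)i+g(g-1)/2$, and use convexity to locate the maximum at an endpoint. Two minor simplifications are available. First, you do not need Theorem~\ref{sskr_are_dl} (which in the paper is proved only later, in Section~\ref{sec:structure_ssp_strata}); the equality $\dim\mathcal A_{w\tau}=\ell(w\tau)=\ell(w)$ follows directly from the general fact that every KR stratum $\mathcal A_{I,x}$ has pure dimension $\ell(x)$, together with $\ell(\tau)=0$. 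Second, the admissibility concern you flag is already settled by Lemma~4.2, which shows that $w\mapsto w\tau$ is a bijection from $W_{\{i,g-i\}}$ onto $\{x\in\mathrm{Adm}(\mu):x_i=\tau_i,\ x_{g-i}=\tau_{g-i}\}$; in particular $w_0^{(i)}\tau\in\mathrm{Adm}(\mu)$ automatically, and the strict inequality $f(i_{\max})>f(j)$ for $j\neq i_{\max}$ already rules out the top element appearing in any other $W_{\{j,g-j\}}$.
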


The
dimension of the whole moduli space $\mathcal A_I$ is $g(g+1)/2$. The
dimension of the union of all superspecial KR strata is given in
Proposition \ref{dim_sspKR}; it is $g^2/2$ if $g$ is even, and $g(g-1)/2$
otherwise. The numbers of KR strata, and of KR strata of $p$-rank $0$ can
be obtained from Haines' paper \cite{haines:bernstein}, Prop.~8.2, together
with the results of Ng\^o and Genestier \cite{ngo-genestier:alcoves}.
The dimension of the $p$-rank $0$ locus is $[g^2/2]$
(see \cite{goertz-yu:kreo}, Thm.~8.8).
It follows in particular that for $g$ even, the dimension of the
supersingular locus is $g^2/2$.
Note that for $g=5$ we do not know the dimension of the
supersingular locus; for $g=6$ we know it only because it has to lie
between the dimension of the union of all superspecial KR strata and the
dimension of the $p$-rank $0$ locus. As a word of warning one should say
that neither of these loci is equi-dimensional in general.

\[
\begin{array}{|l|c|c|c|c|c|c|}\hline
g & 1 & 2 & 3 & 4 & 5 & 6 \\\hline
\text{number of KR strata} & 3 & 13 & 79 & 633 & 6331 & 75973 \\\hline
\text{number of KR strata of $p$-rank } 0 & 1 & 5 & 29 & 233 & 2329 &
27949 \\\hline 
\text{dim.~of union of superspecial KR strata} & 0 & 2 & 3 & 8 & 10 &
18 \\\hline 
\text{dim.~of supersingular locus} & 0 & 2 & 3 & 8 & ? & 18 \\\hline
\text{dim.~of $p$-rank $0$ locus} & 0 & 2 & 4 & 8 & 12 & 18 \\\hline
\dim \mathcal A_I & 1 & 3 & 6 & 10 & 15 & 21 \\\hline
\end{array}
\]

Furthermore, it can be shown that any irreducible component of maximal
dimension of the union of all superspecial KR strata is actually an
irreducible component of the $p$-rank $0$ locus, and hence in particular an
irreducible component of the supersingular locus. Also see the remarks
at the end of Section~\ref{sec:superspecial_strata} and in particular
\cite{goertz-yu:kreo}.

Deligne-Lusztig varieties play a prominent role in the representation
theory of finite groups of Lie type. More precisely, one can realize the
representations of these groups in their cohomology (with coefficients in
certain local systems). On the other hand, generally speaking, it is
suggested by the work of Boyer \cite{boyer:llc}, Fargues
\cite{fargues:thesis}, Harris and Taylor
\cite{harris:ecm2000,harris:ihp2000,harris-taylor:llc}, and others that a
correspondence of Jacquet-Langlands type should be realized in the
cohomology of the supersingular locus. Of course, one will have to consider
deeper level structure. Nevertheless, since the supersingular locus (or
even the union of superspecial KR strata) is of quite high dimension, and
with the link to Deligne-Lusztig varieties we have an interesting
connection to representation theory, it is of high interest to
investigate which representations occur in the cohomology of the
superspecial KR strata. We remark that the local component $\Pi_p$ at $p$ of an 
admissible representation $\Pi_f$ of $GSp_{2g}(\A_f)$ which occurs in the 
cohomology of the moduli space $\calA_I$ has a non-zero Iwahori fixed vector.
It is proved by Borel \cite{borel:Iwahori} that (a) 
any subquotient of an unramified principal series contains a non-zero
  Iwahori fixed vector, and (b)
any irreducible admissible representation which possesses a non-zero
  Iwahori fixed vector occurs as a subquotient of an unramified principal
  series.

There is a third stratification on the moduli space $\mathcal A_g$, the
so-called Ekedahl-Oort stratification. It
is given by the isomorphism class of the $p$-torsion (as a finite group
scheme) of the underlying abelian variety. See Oort's paper \cite{oort01}.
The relationship between the Newton stratification and the
EO stratification has been studied by Harashita; see for instance
\cite{harashita} where Deligne-Lusztig varieties also make an appearance.
About the relationship between the KR stratification on $\mathcal A_I$ and
the EO stratification on $\mathcal A_g$, not much is known at
present. 
It is easy to show that the image of every KR stratum under the natural map
is a union of certain EO strata.  In particular, all supersingular KR
strata are contained in the inverse image of the union of all EO strata
which are contained in the supersingular locus of $\mathcal A_g$. 
It is not clear to us whether this inclusion is an equality.  
We also mention the paper \cite{ekedahl-vdgeer} by Ekedahl and 
van der Geer, in which the EO
stratification of $\mathcal A_g$ is investigated using a \emph{flag
complex} $\mathcal F_g \rightarrow \mathcal A_g$, which contains part of
$\mathcal A_I$. See \cite{goertz-yu:kreo} for some results about the
relationship between the KR stratification and the Ekedahl-Oort
stratification.

Deligne-Lusztig varieties also play a role in recent work
of Yoshida \cite{yoshida} who gives a local approach to non-abelian
Lubin-Tate theory (in the case of depth $0$), and
of Vollaard and Wedhorn \cite{vollaard}, \cite{vollaard-wedhorn} about the
supersingular locus of the Shimura variety of $GU(1,n-1)$ in the case of
good reduction. There is also the paper \cite{hoeve} by Hoeve which
appeared very shortly after the first version of the current article,
where a description of Ekedahl-Oort strata which are entirely contained in
the supersingular locus is given in terms of Deligne-Lusztig varieties.
This refines the results of Harashita \cite{harashita}. 
It seems that there must be a relationship to our description of 
superspecial KR strata; it would be interesting to understand it 
precisely.

We conclude the introduction with an overview about the individual
sections. In Section 2, we recall the group theoretic notation which we
use, as well as the definition of local models and of the KR
stratification. We give an explicit numerical characterization of KR strata
in terms of abelian varieties (Cor.~\ref{cor_num_char}). In Section 3 we
assemble some results about the minimal KR stratum. In Section 4 we
construct the list of superspecial KR strata, prove that they are
supersingular, and compute the dimension of their union,
and in Section 5 we recall the definition and some basic properties of
Deligne-Lusztig varieties. The main result about the description of the
supersingular KR strata constructed in Section 4 as disjoint unions of
Deligne-Lusztig varieties is given in Section 6.  We close with a few
remarks about generalizations to other Shimura varieties, specifically to
the unitary case in Section 7.

\section{Local models and numerical characterization of strata}
\label{sec:02}

In Sections \ref{subsec:2.1}--\ref{sec:24} we recall some notation and
collect a number of previously known facts about the moduli spaces we are
concerned with, and the KR stratification.

\subsection{The extended affine Weyl group}
\label{subsec:2.1}

We fix a complete discrete valuation ring $\calO$ with fraction
field $K$, uniformizer $\pi\in\calO$, and residue class field
$\kappa:=\calO/\pi\calO$.

The groups which are of primary interest for us in the sequel are $G=GL_n$
and $G=GSp_{2g}$. In these cases, everything is easily made explicit (and
we will mostly do so, below). It turns out that (in contrast to the general
case) ``all'' notions are well behaved with respect to the natural
embedding $GSp_{2g} \subset GL_{2g}$.

Let $G$ be a split connected reductive group over $\calO$, let $T$ be
a split maximal torus, 
and let $B\supseteq T$ be a Borel subgroup of $G$.  These data give rise to
a based root datum $(X^*, X_*, R, R^\vee, \Delta)$.  We will assume for
simplicity that it is irreducible. Denote by $W = N_GT/T$ its Weyl group,
generated by the simple reflections $\{ s_\alpha;\ \alpha \in \Delta \}$.
Denote by $\widetilde{W} := X_* \rtimes W \cong
N_GT(\kappa((t)))/T(\kappa[[t]])$ the extended affine Weyl group.
For $\lambda\in X_* = X_*(T)$, we denote by $t^\lambda$ the corresponding
element in $\widetilde{W}$. We also use the notation $t^\lambda$ for the
element $\lambda(t)$ of $G(\kappa((t)))$ (where we regard $\lambda$ as a
homomorphism $\mathbb G_m\rightarrow G$).
We may regard the group $\wt W$ as a subgroup of the group
$\bfA(X_{*,\mathbb R})$ of affine transformations on the space
$X_{*,\mathbb R}$.
The element $x=t^\nu w$ is identified with the
function $x(v)=w\cdot v+\nu$ for $v\in X_{*,\mathbb R}$.
Let $S_a = \{ s_\alpha;\ \alpha
\in \Delta\} \cup \{ s_0 \}$, where $s_0 =
t^{-\tilde{\alpha}^\vee}s_{\tilde{\alpha}}$ and where
$\tilde{\alpha}$ is the unique highest root. The subgroup $W_a \subseteq
\widetilde{W}$ generated by $S_a$ is the affine Weyl group of the root
system associated with our root datum, and $(W_a, S_a)$ is a Coxeter
system.

We define a length function $\ell \colon \widetilde{W} \longrightarrow
\mathbb Z$ as follows:
\begin{equation}
  \label{eq:21}
  \ell(w t^\lambda) = \sum_{\gfrac{\alpha<0}{w(\alpha) > 0}} |
\langle \alpha, \lambda \rangle +1 | + \sum_{\gfrac{\alpha<0}{w(\alpha)<0}}
| \langle \alpha,\lambda \rangle |
\end{equation}
This function extends the length function on $W_a$.
We have a short exact sequence
\begin{equation}
  \label{eq:22}
  1 \longrightarrow  W_a \longrightarrow \widetilde{W} \longrightarrow
  X_*/Q^\vee \longrightarrow 0,
\end{equation}
where $Q^\vee$ is the coroot lattice, i.~e.~the subgroup of $X_*$ generated
by $R^\vee$. The restriction of the projection $\widetilde{W}
\longrightarrow X_*/Q^\vee$ to the subgroup $\Omega \subseteq
\widetilde{W}$ of elements of length $0$ is an isomorphism $\Omega
\isomarrow X_*/Q^\vee$. The group $X_*/Q^\vee$ is called the algebraic
fundamental group of $G$, and is sometimes denoted by $\pi_1(G)$.

We extend the Bruhat order on $W_a$ to $\wt W$ by declaring 
\begin{equation}
  \label{eq:23}
  w\tau \le w'\tau' \Longleftrightarrow w \le w', \tau=\tau',\qquad
w,w'\in W_a,\ \tau, \tau' \in \Omega.
\end{equation}

For an affine root $\beta = \alpha - n$, $\alpha\in R$, $n \in \mathbb Z$,
we have the hyperplane $H_\beta = H_{\alpha,n} = \{ x \in X_{*,\mathbb R};\
\langle \alpha, x \rangle = n \}$ in $X_{*,\mathbb R} := X_*
\otimes_{\mathbb Z} \mathbb R$.  An alcove is a connected component of
the complement of the union of all affine root hyperplanes inside
$X_{*,\mathbb R}$. There is a unique alcove lying in the
\emph{anti-dominant} chamber (with respect to $\Delta$) whose closure
contains the 
origin. We call this alcove the base alcove and denote it by $\mathbf a$.
The group $\widetilde{W}$ acts on
$X_{*,\mathbb R}$, and since the union of all affine root hyperplanes is
stable under this action, we have an action of $\widetilde{W}$ on the set
of alcoves. The affine Weyl group $W_a$ acts simply transitively on the set
of alcoves, so we can identify $W_a$ with the set of alcoves in the
standard apartment $X_{*,\mathbb R}$ by mapping $w \in W_a$ to the alcove
$w\mathbf a$. On the other hand, the group $\Omega$ of elements of length
$0$ in $\widetilde{W}$ is precisely the stabilizer of the base alcove
inside $\widetilde{W}$. If $\lambda$ denotes the image of the origin under
$\tau \in \Omega$, then $\tau w_0 w = t^\lambda$, where $w$ is
the longest element in the stabilizer $W_\lambda$ of $\lambda$ in $W$, and
$w_0$ is the longest element in $W$.

\subsection{The general linear group}
\label{gln}

If $G = GL_n$, we choose $T$ to be the diagonal torus, and let $B$ be the
Borel subgroup of upper triangular matrices. We identify $X_*(T)$ with
$\mathbb Z^n$, and the Weyl group with the symmetric group $S_n$.
The fundamental group $\pi_1(GL_n)$ is isomorphic to $\mathbb Z$.
The standard lattice chain over $\calO$ is the chain
\begin{equation}
  \label{eq:24}
 \stdp_0 =\calO^{n} \subset \stdp_1 \subset \dots \subset
 \stdp_{n-1}\subset \stdp_{n} = \pi^{-1} \stdp_0  
\end{equation}
of $\calO$-lattices in $V$ where the lattice $\stdp_{i}$ is
defined as
\[
\stdp_{i} = \langle \pi^{-1}e_1,\dots, \pi^{-1}e_{i},
e_{i+1},\dots,e_{n}\rangle.
\]
(The
$\calO$-submodule in $V$ generated by $x_1,\dots, x_k\in V$ is denoted
by $\langle x_1,\dots, x_k\rangle$.) We extend the lattice chain to
$\stdp_i$ with 
index set $i\in \Z$ by putting $\stdp_{i+n}=\pi^{-1} \stdp_i$.
The stabilizer of the standard lattice chain is the Iwahori subgroup
$\Iw$ associated 
with $B$ (i.~e.~$\Iw$ is the inverse image of $B(\kappa)$ under the
projection $G(\calO) \rightarrow G(\kappa)$). We call $\Iw$ the standard
Iwahori subgroup. The base alcove $\mathbf a_1$ corresponding to $\Iw$
is the alcove whose stabilizer is $\Iw$.
Similarly, we have the standard lattice chain $(\stdt_i)_i$ over
$\kappa[[t]]$.

We denote by $\Flag_{GL_n}$ the affine flag variety for $GL_n$ over
$\kappa$. It 
parametrizes complete periodic lattice chains in $\kappa((t))^n$. See
\cite{beauville-laszlo}, \cite{faltings:loop-groups} for
details. Using the 
chain $(\stdt_i)_i$ as the base point, we can identify $\Flag_{GL_n}$
with the quotient (as fppf sheaves) of the loop group $G(\kappa((t)))$
by the stabilizer of 
$(\stdt_i)_i$, the standard Iwahori subgroup of $G(\kappa((t)))$.

Let us recall the definition of the local model attached to the general
linear group; see \cite{rapoport-zink}. We refer to this case as the \emph{linear case}. Fix a 
positive integer $r$, $0 < r < n$, and let $\Mlin$ be the $\calO$-scheme
representing the following functor. For an $\calO$-algebra $R$, let
\begin{equation}
  \label{eq:25}
\Mlin(R) = \{ (\scrF_i)_i \in \prod_{i=0}^{n-1} \mathop{\rm
Grass}\nolimits_{r}(\stdp_i)(R);\ \alpha_i(\scrF_i)
\subseteq \scrF_{i+1} \text{ for all } i=0, \dots, n-1\},
\end{equation}
where $\alpha_i \colon \stdp_i \otimes R \rightarrow \stdp_{i+1}
\otimes R$ is the map obtained by base change from the inclusion $\stdp_i
\subset \stdp_{i+1}$.  In terms of the natural bases, it is given by
the matrix ${\rm diag}(1, \dots, 1, \pi, 1,\dots, 1)$ 
with the $\pi$ in the $(i+1)$-th place. Note that we impose the
condition for $i=n-1$, too, where we set $\scrF_n := \scrF_0$.
We can similarly define a variant $\Mlin_J$, if instead of $I$ we use a
subset $J \subset I$ as the index set, i.~e.~where we consider compatible
families $(\scrF_i)_{i\in J}$.

We can
identify the special fiber $\Mlin_\kappa$ of the local model with the
closed subscheme
\begin{equation}
  \label{eq:26}
\{ (\scrL_i)_i;\ t \stdt_i \subseteq \scrL_i \subseteq \stdt_i,\
\bigwedge^n \scrL_0 = t^{n-r} \kappa[[t]]\}  
\end{equation}
of the affine flag variety (it is clear how to understand the above
description in a functorial way, and that this functor coincides with the
functor represented by the local model; see also \cite{goertz:symplectic}).
See \cite{rapoport-zink}, ch.~3 for a more general definition of local
models and for the relationship to Shimura varieties and moduli spaces of
$p$-divisible groups. From the point of view of Shimura varieties, the
local model defined above occurs in the case of unitary groups which split
over an unramified extension of $\mathbb Q_p$.

\subsection{The group of symplectic similitudes}

Set $V:=K^{2g}$ and let $e_1,\dots,e_{2g}$ be the standard basis. Denote by
$\psi:V\times V\to K$ the non-degenerate alternating form whose non-zero
pairings are 
\begin{eqnarray*}
&& \psi(e_i,e_{2g+1-i})=1, \quad 1\le i\le g,\\
&& \psi(e_i,e_{2g+1-i})=-1, \quad g+1 \le i\le 2g.
\end{eqnarray*}
The representing matrix for
$\psi$ is 
\[ 
\begin{pmatrix}
  0 & \wt I_g \\
  -\wt I_g & 0 
\end{pmatrix},\quad \wt I_g=\text{anti-diag(1,\dots,1)}.\]

Let $GSp_{2g}$ be the group of symplectic
similitudes with respect to $\psi$ (over $\calO$ or over $\kappa$ etc.,
depending on the context). 
Denote by $T$ the diagonal torus of $GSp_{2g}$, and by $B$ the Borel group
of upper triangular matrices.

The standard embedding $GSp_{2g} \subset GL_{2g}$ gives us an
identification of the Weyl group $W = N_GT/T$ with a subgroup of the Weyl
group of $GL_{2g}$. If we identify the latter as the symmetric group
$S_{2g}$ in the usual way, then $W$ is the subgroup 
consisting of elements that commute with the permutation
\begin{equation}
  \theta=(1,2g)(2,2g-1)\dots(g,g+1).
\end{equation}

Similarly, we identify the cocharacter group $X_*(T)$ of $T$ with the
group  
\begin{equation}
\{(u_1,\dots, u_{2g})\in \Z^{2g}\, |\, u_1+u_{2g}=\dots=u_{g}+u_{g+1} \}.  
\end{equation}

The extended affine Weyl group $\wt W$ is the semi-direct product
$X_*(T)\rtimes W$ of the finite Weyl group $W$ and the cocharacter group
$X_*(T)$.
The affine Weyl group is generated by
the simple affine reflections $s_0,\dots, s_g$ which we can express (with
respect to the identification 
 $\wt W \subset \wt W_{GL_{2g}} = \mathbb Z^{2g} \rtimes S_{2g}$) as
\begin{equation}\label{desc_si}
  \begin{array}{l}
  s_i=(i,i+1) (2g+1-i, 2g-i), \quad i=1,\dots,g-1,\\
  s_g=(g,g+1),\quad s_0=(-1,0,\dots,0,1), (1,2g).
  \end{array}
\end{equation}

The fundamental group of $GSp_{2g}$ is $\mathbb Z$. If $\tau$ denotes a
generator of the subgroup $\Omega\subset \wt W$ of elements of length
zero, 
then $\tau^2$ is central. In fact, $\tau^2$ is $t_{(1, \dots, 1)}$ or
$t_{(-1,\dots, -1)}$; later we will work with $\tau$ such that $\tau^2 =
t_{(1,\dots, 1)}$.

Let $\mu = (1,\dots,1,0,\dots,0)$ be the minuscule dominant coweight,
associated with the Shimura variety of Siegel type.

The standard lattice chains $(\stdp_i)_i$ (in $K^{2g}$) and
$(\stdt_i)_i$ (in $\kappa((t))^{2g}$) defined above are self-dual. We
define the standard Iwahori subgroup $\Iw$ (in $GSp_{2g}(K)$, and
$GSp_{2g}(\kappa((t)))$, resp.) to be the stabilizer of the standard lattice
chain. The standard lattice chain also gives rise to a base alcove in the
apartment $X_*(T)_{\mathbb R}$.

We have the affine flag variety $\Flag_{GSp_{2g}}$ for $GSp_{2g}$ which
again we can define as the quotient of the loop group $GSp_{2g}(k((t)))$ by
the Iwahori subgroup fixed above.
The inclusion $GSp_{2g} \subset GL_{2g}$ induces a 
closed embedding of $\Flag_{GSp_{2g}}$ into the affine flag variety for
$GL_{2g}$. In this way, we can identify $\Flag_{GSp_{2g}}$ with the locus of
all self-dual lattice chains in $\Flag_{GL_{2g}}$.

Now we want to give the definition of the local model for the symplectic
group. Let $r=g$, $n = 2g$. For this choice of $r$ and $n$
we get a linear local model $\Mlin$ as defined above, and we let $\Msympl$
be the $\calO$-scheme representing the functor
\[
\Msympl(R) = \{ (\scrF_i)_i \in \Mlin(R);\ 
\forall i: \scrF_i \rightarrow \stdp_{i}\otimes R \cong
(\stdp_{2g-i}\otimes R)^\vee \rightarrow \scrF_{2g-i}^\vee \text{ is
  the zero map} \} 
\]
where $R$ is any $\calO$-algebra, $\stdp_{i}\otimes R
\cong (\stdp_{2g-i}\otimes R)^\vee$ is the isomorphism induced by
$\psi$ (and the 
periodicity isomorphism $\stdp_{-i} \cong \stdp_{2g-i}$), and
$(\stdp_{2g-i}\otimes R)^\vee \rightarrow \scrF_{2g-i}^\vee$ is the
$R$-dual of the inclusion $\scrF_{2g-i} \rightarrow
\stdp_{2g-i}\otimes R$. Clearly, 
$\Msympl$ is represented by a closed subscheme of $\Mlin$.  Because of the
duality condition, it is enough to keep track of the partial chain
$\scrF_i$, $i=0,\dots, g$. Because of the periodicity, we can also use the
lattice chain $\stdp_{-g} \rightarrow \cdots \rightarrow \stdp_{0}$
instead of $\stdp_{0} \rightarrow \cdots \rightarrow \stdp_{g}$.

\subsection{Moduli spaces and the KR stratification}
\label{sec:24}
Let $g\ge 1$ be an integer, $p$ a rational prime, $N\ge 3$ an integer
with $(p,N)=1$. Choose $\zeta_N\in \Qbar \subset \C$ a primitive
$N$th root of unity and fix an embedding $\Qbar\hookrightarrow \Qpbar$. 
Put $I:=\{0,1,\dots,g\}$. Let $\calA_I$ be the moduli space over
$\Fpbar$ parametrizing equivalence classes of objects
\[ (A_0\stackrel{\alpha}{\to} A_1\stackrel{\alpha}{\to}\dots
\stackrel{\alpha}{\to} A_g, \lambda_0,\lambda_g,\eta), \]
where 
\begin{itemize}
\item each $A_i$ is a $g$-dimensional abelian variety,
\item $\alpha$ is an isogeny of degree $p$,
\item $\lambda_0$ and $\lambda_g$ are principal polarizations on $A_0$
  and $A_g$, respectively, such that $(\alpha^g)^* \lambda_g=p\lambda_0$.
\item $\eta$ is a symplectic level-$N$ structure on $A_0$
  w.r.t. $\zeta_N$. 
\end{itemize}
Put $\eta_0:=\eta$, $\eta_i:=\alpha_* \eta_{i-1}$ for $i=1,\dots, g$,
and $\lambda_{i-1}:=\alpha^* \lambda_i$ for $i=g,\dots, 2$. Let $\ul
A_i:=(A_i,\lambda_i,\eta_i)$. Then $\calA_I$ parametrizes equivalence
classes of  objects
\[ (\ul A_0\stackrel{\alpha}{\to} \ul A_1\stackrel{\alpha}{\to}\dots
\stackrel{\alpha}{\to} \ul A_g), \]
where $\ul A_0\in \calA_{g,1,N}$, and for $i\neq 0$, 
\[ \ul A_i\in \calA'_{g,p^{g-i},N}:=\{\ul A\in \calA_{g,p^{g-i},N}\,
|\, \ker \lambda\subset A[p]\, \}. \]
Here $\calA_{g,d,N}$ denotes the moduli space of abelian varieties of
dimension $g$ with a polarization of degree $d^2$ and a symplectic
level-$N$-structure.
For any non-empty subset $J=\{i_0,\dots, i_r\}\subset I$, let $\calA_J$
be the moduli space over $\Fpbar$ parametrizing equivalence classes of 
objects
\[  (\ul A_{i_0} \stackrel{\alpha}{\to} \ul
A_{i_1}\stackrel{\alpha}{\to}\dots 
\stackrel{\alpha}{\to} \ul A_{i_r}), \]
where $\ul A_{i_0}\in \calA_{g,1,N}$ if $i_0=0$, and $\ul A_{i_j}\in
\calA'_{g,p^{g-i_j},N}$ for others.

For $J_1\subset J_2$, let $\pi_{J_1,J_2}:\calA_{J_2}\to \calA_{J_1}$
be the natural projection. The transition morphism $\pi_{J_1,J_2}$ is
proper and dominant. For $1\le j\le r$, put $k_j:=i_j-i_{j-1}$ and put
$k_0:=0$. There are the following fundamental results

\begin{fact}\label{21}\
 \begin{itemize}
\item[(1)] The ordinary locus $\calA_J^{\rm ord}\subset \calA_J$ is dense.

\item[(2)] The moduli space $\calA_J$ has pure dimension $g(g+1)/2$.

\item [(3)] The moduli space $\calA_J$ has $(k_0+1)(k_1+1)\dots(k_r+1)$ irreducible
  components. 
\end{itemize} 
\end{fact}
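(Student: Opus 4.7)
The approach is to pass through the Rapoport--Zink local model diagram
\[
\calA_J \xleftarrow{\varphi_J} \widetilde\calA_J \xrightarrow{\pi_J} \Msympl_J,
\]
in which both arrows are smooth and surjective of the same relative dimension. All three assertions then reduce to the corresponding combinatorial statements about $(\Msympl_J)_\kappa$, which is an explicit closed subscheme of the affine flag variety.

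For (2), G\"ortz's flatness theorem for $\Msympl_I$ at Iwahori level, combined with the fact that the forgetful map $\Msympl_I \to \Msympl_J$ is smooth on generic fibers, gives flatness of $\Msympl_J$. Its generic fiber is a partial Lagrangian flag variety of dimension $g(g+1)/2$, so flatness propagates this to the special fiber as pure dimension, and the statement transfers to $\calA_J$. For (1), the ordinary locus on $(\calA_J)_\kappa$ pulls back from the generic-position open cell in $(\Msympl_J)_\kappa$, which meets every top-dimensional component of the special fiber; combined with the pure dimension from (2), density follows.

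For (3), I would count the irreducible components by analyzing the ordinary locus, which is dense by (1). For an ordinary $\ul A_{i_0}$, the canonical \'etale-multiplicative splitting $A_{i_0}[p] = E \oplus M$, together with the non-degeneracy of the induced pairing on $\ker \lambda_{i_0}$ (which sees only the $E \times M$ coupling of the Weil pairing), forces a unique type at level $i_0$: trivial if $i_0 = 0$, and self-dual of \'etale-rank $g - i_0$ if $i_0 > 0$. This explains the convention $k_0 = 0$. Along the chain, each subsequent isogeny kernel $H_j := \ker(A_{i_{j-1}} \to A_{i_j})$ has order $p^{k_j}$ and splits as $H_{j,E}\oplus H_{j,M}$, and its \'etale rank $a_j \in \{0,1,\dots,k_j\}$ is a discrete invariant of the ordinary chain, giving $k_j+1$ choices at step $j$. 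Multiplying over $j$ yields the claimed count $\prod_{j=0}^r (k_j+1)$, and each fixed type cuts out an iterated Grassmannian bundle over $\calA_{g,1,N}$ (geometrically irreducible for $N \ge 3$), hence an irreducible subvariety.

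The main obstacle is to verify rigorously that these $\prod_j (k_j+1)$ combinatorial types on the ordinary locus correspond to distinct irreducible components of $\calA_J$, i.e.\ that no two lie in the closure of a common irreducible component via degenerations into the non-ordinary locus. This is cleanest to check inside the local model: each type corresponds to a distinct top-dimensional Schubert stratum of $(\Msympl_J)_\kappa$, and the Bruhat closure relations do not merge distinct top strata. Combined with the irreducibility within each type (from the Grassmannian-bundle description) and the transfer through the local model diagram, this completes the count and thereby all three parts of the fact.
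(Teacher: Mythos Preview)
Your proposal reconstructs the arguments behind the references the paper cites, rather than simply invoking them, and the overall strategy is sound. Two points need correction.

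In (2), flatness of $\Msympl_J$ does not follow from flatness of $\Msympl_I$ together with smoothness of $\Msympl_I \to \Msympl_J$ on generic fibers: that implication fails in general (a flat scheme can surject onto a non-flat one, even isomorphically over the generic point). G\"ortz's paper \cite{goertz:symplectic} establishes flatness of $\Msympl_J$ directly for every parahoric level, so you should cite that instead. The paper, incidentally, reverses your logical order and deduces (2) from (1): once the ordinary locus is known to be dense and smooth of dimension $g(g+1)/2$, pure dimension of $\calA_J$ follows immediately.

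In (1), the phrase ``the ordinary locus pulls back from the generic-position open cell in $(\Msympl_J)_\kappa$'' conceals the real work. The local model diagram transports only the relative position of the Hodge filtration, not the Newton stratum; identifying the ordinary locus with the union of maximal KR strata is precisely the content of the Ng\^o--Genestier $p$-rank formula, which you have not justified. Note also that $(\Msympl_J)_\kappa$ generally has several maximal Schubert cells---one per irreducible component, as your own count in (3) confirms---not a single open one.

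Your treatment of (3) via the \'etale/multiplicative decomposition of isogeny kernels on the ordinary locus is exactly the strategy of \cite{yu:gamma}. Your direct handling of the case $i_0>0$ through Cartier self-duality of $\ker\lambda_{i_0}$ (forcing a unique balanced type and hence explaining the factor $k_0+1=1$) is correct and slightly cleaner than the paper's own reduction, which adjoins $0$ to $J$ and restricts to the locus where $\ker(A_0\to A_{i_0})$ is \'etale before invoking the case $0\in J$ already treated in \cite{yu:gamma}.
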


\begin{proof}
Part (1) is proved in Ng\^o-Genestier
\cite{ngo-genestier:alcoves} for the case $J=I$ and in \cite{yu:gamma}
in general. Part (2) follows from (1) (or from the flatness of the
integral model of $\calA_J$, see \cite{goertz:symplectic}). 
Part (3)
is proved in \cite{yu:gamma} in the case $0\in J$. However, the general
case follows easily: consider
$J'=J\cup\{0\}$ and let $\calA_{J'}^{\rm ord, e}\subset
\calA_{J'}^{\rm ord}$ be the subvariety consisting of objects $(\ul
A_\bullet)$ such that $\ker(A_0\to A_{i_0})$ is etale. Then the
argument of loc.~cit.~shows that $\calA_{J'}^{\rm ord, e}$ has
$\prod_{i=0}^r (k_i+1)$ irreducible components, and that
$\calA_{J'}^{\rm ord, e}$ and $\calA^{\rm ord}_J$ have same number of
irreducible components. This proves (3) for all $J$.  
Note that this result for the case $|J|=1$ is also obtained in 
de Jong \cite{dejong:ag}.
\qed
\end{proof}

Now let $\mathcal O = \mathbb Z_p$, $V=\Qp^{2g}$, and let 
$\psi$, and the lattice chain $(\stdp_{i})_{i\in\mathbb Z}$ be as above.
Put $\psi_0:=\psi$ on $\stdp_0= \mathbb Z_p^{2g}$. 
Let $\psi_{-g}$ be the pairing on $\Lambda_{-g}$ which is
$\frac{1}{p}$ times the pull-back of $\psi_0$.
We shall also write $M^{\rm loc}_I$ for the local model $\Msympl$ 
associated to this lattice chain $\stdp_\bullet$ (here $I$ stands for the
Iwahori case, i.~e.~$I=\{ 0, \dots, g \}$). 

Let $\wt \calA_I$ be the moduli space over $\Fpbar$ parametrizing
equivalence classes of 
objects $(\ul A_\bullet,\xi)$, where $\ul A_\bullet\in \calA_I$ and
$\xi=(\xi_i)_{i\in I}:H^1_{\rm DR}(A_i/S)\simeq \stdp_{-i}\otimes
\calO_S$ is an isomorphism of lattice chains which
preserves the polarizations up to
scalars. Taking duals, the trivializations $\xi$ of the chain of 
de Rham cohomologies $H^1_{\rm DR}(A_i/S)$ are in one-to-one
correspondence with those of the chain of its dual 
$H_1^{\rm DR}(A_i/S)$ with the lattice chain $\{\stdp_i\otimes
\mathcal O_S\}_{i\in 
I}$. Here $H_1^{\rm DR}(A_i/S)$ is the linear dual of $H^1_{\rm
DR}(A_i/S)$. Let $\calG_I$ be the group scheme over $\Zp$
representing 
the functor $S\mapsto \Aut (\stdp_I\otimes
\calO_S,[\psi_0],[\psi_{-g}])$. The group scheme $\calG_I$ is smooth
and affine. This group acts on $\wt \calA_I$ and 
$\bfM^{\rm loc}_{I}$ from the left in the obvious way.

Following Rapoport and Zink \cite{rapoport-zink}, we have the
following local model diagram: 
\begin{equation}\label{eq:29}
  \xymatrix{
 & \wt \calA_I \ar[ld]_{\varphi_1} \ar[rd]^{\varphi_2} & \\
\calA_I & & \bfM^{\rm loc}_{I,\Fpbar},   
}
\end{equation}
where
\begin{itemize}
\item The morphism $\varphi_2$ is given by sending each object
  $(\ul A_\bullet,\xi)$ to the image $\xi(\omega_\bullet)$ of the
  Hodge filtration $\omega_\bullet \subset H^1_{\rm DR} (A_\bullet)$.
  This map is $\calG_I$-equivariant, surjective and smooth. 

\item The morphism $\varphi_1:\wt \calA_I\to \calA_I$ is simply
  forgetting the trivialization $\xi$; 
  this map is a $\calG_I$-torsor, and hence is smooth and
  affine.
\end{itemize}

Similarly we define the local model $\bfM^{\rm loc}_{J}$, the
moduli space $\wt \calA_J$, and the group scheme $\calG_J$ for each
non-empty subset $J\subset I$. We also have the
local model diagram between $\calA_J$, $\wt \calA_J$ and 
$\bfM^{\rm loc}_{J,\Fpbar}$ and the properties as above.
Although in this paper we are only concerned with these spaces in positive
characteristic, we remark that this construction
can be carried through over $\mathbb Z_p$ instead of $\mathbb F_p$.

Consider the decomposition into $\calG_I$-orbits, and its pullback to
$\wt \calA_I$:
\begin{equation}
  \bfM^{\rm loc}_{I,\Fpbar}=\coprod_{x} \bfM^{\rm loc}_{I,x},\quad 
\wt \calA_I=\coprod_{x}\wt \calA_{I,x}.
\end{equation}
Since $\varphi$ is a $\calG_I$-torsor, the stratification on 
$\wt \calA_I$ descends to a stratification, 
\begin{equation}
 \calA_I=\coprod_{x\in {\rm Adm}_I(\mu)} \calA_{I,x}. 
\end{equation}
This is called the Kottwitz-Rapoport (KR) stratification. 
We sometimes write $\calA_w$ instead of $\calA_{I,w}$.
The strata are indexed by the (finite) set ${\rm Adm}_I(\mu)$ of
$\mu$-admissible elements in the extended Weyl group $\wt W$. 
We recall the definition: 
\begin{equation}
{\rm Adm}_I(\mu)=\{x\in \wt W\, ; x\le t_{w(\mu)} \text{ for some
  $w\in W$}\, \}.\\
\end{equation}
Kottwitz and Rapoport \cite{kottwitz-rapoport:alcoves} have shown that
${\rm Adm}_I(\mu)$ is precisely the set of $\mu$-permissible alcoves as
defined in the following section (Def.~\ref{muperm}). 

In fact, the set ${\rm Adm}_I(\mu)$ is contained in $W_a \tau$, where 
 $\tau$ is the unique element that is less than $t^\mu$ and fixes 
 the base alcove $\mathbf a$.
In terms of the identification of $\wt W$ with a subgroup of $\wt
W_{GL_{2g}} = \mathbb Z^{2g} \rtimes S_{2g}$, we have 
\begin{equation}
  \tau=t_{(0,\dots,0,1,\dots, 1)} (1,g+1)(2,g+2)\dots(g,2g). 
\end{equation} 
We also note the following results
\begin{fact}\
  
{\rm (1)} Each KR stratum $\calA_{I,x}$ is smooth of pure dimension
$\ell(x)$.  

{\rm (2)} {\rm (Ng\^o-Genestier \cite{ngo-genestier:alcoves})} The
  $p$-rank function is constant on each KR 
  stratum. Furthermore, one has 
\[ p{\rm -rank}(x)=\frac{1}{2} \# {\rm Fix}(w), \]
where we write $x=t^\nu w$ and ${\rm Fix}(w):=\{i\in\{1,\dots, 2g\};
w(i)=i\}$ (and consider $w\in W\subset S_{2g}$).
\end{fact}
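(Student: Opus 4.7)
The plan is to deduce both statements from the local model diagram (\ref{eq:29}), by transferring geometric properties between $\calA_I$ and $\bfM^{\rm loc}_{I,\Fpbar}$ via $\wt\calA_I$.

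For part (1): by construction, the KR stratification on $\calA_I$ is the descent of the $\calG_I$-orbit stratification on $\bfM^{\rm loc}_{I,\Fpbar}$, first pulled back to $\wt\calA_I$ along the smooth $\calG_I$-equivariant map $\varphi_2$, then pushed down along the $\calG_I$-torsor $\varphi_1$. Both maps are smooth, so smoothness and dimension of the strata can be read off from the local model. Using the embedding (\ref{eq:26}), the special fiber of the local model sits inside the affine flag variety $\Flag_{GSp_{2g}}$, and the $\calG_I$-orbits match the Iwahori-orbits (Schubert cells) $\Iw x\Iw/\Iw$ for $x\in{\rm Adm}_I(\mu)$. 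Standard affine Bruhat theory identifies each such cell with $\bbA^{\ell(x)}$, hence as a smooth scheme of dimension $\ell(x)$. Since $\dim\calA_I=\dim\bfM^{\rm loc}_{I,\Fpbar}=g(g+1)/2$ by Fact \ref{21}(2), the relative dimensions of $\varphi_1$ and $\varphi_2$ agree, and dimensions transfer cleanly through the diagram to give $\dim\calA_{I,x}=\ell(x)$.

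For part (2), the approach is via Dieudonn\'e theory, following Ng\^o-Genestier. Fix a geometric point of $\calA_{I,x}$ with $x=t^\nu w$, and let $(M_i,F)$ denote the chain of Dieudonn\'e modules of $(A_i)$. The $p$-rank of $A_0$ equals the $k$-dimension of the maximal submodule of $M_0/pM_0$ on which $F$ acts bijectively. The local model diagram pins down the position of the Hodge filtration $VM_0/pM_0\subset M_0/pM_0$ in terms of the combinatorial datum $x$. Choosing an adapted basis compatible with the chain, one shows that each fixed index $i$ of $w$ contributes a basis vector on which $F$ acts as a Teichm\"uller unit, whereas the non-fixed indices group into $F$-nilpotent blocks; the symplectic self-duality pairs fixed indices $(i,2g+1-i)$, producing the factor $\tfrac{1}{2}$. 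Since this count depends only on $x$ (not on the point in the stratum), constancy of the $p$-rank on $\calA_{I,x}$ follows simultaneously.

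The main obstacle is part (2): the local model captures only the Hodge filtration, not the $\sigma$-semi-linear Frobenius on the crystal, so the $p$-rank cannot be read off the local model directly. The Ng\^o-Genestier argument bypasses this using the alcove picture of ${\rm Adm}_I(\mu)$ (equivalently, the $\mu$-permissible set) together with a Hodge-Newton-style reduction, to show that the combinatorial datum $x$ forces enough rigidity on $F$ that $\tfrac{1}{2}\#{\rm Fix}(w)$ can be extracted. The formal passage through the local model diagram is essentially automatic; the substance lies in this rigidity analysis, which is the content of their paper.
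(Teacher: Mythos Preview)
The paper does not prove this statement: it is recorded as a \emph{Fact}, with part~(2) attributed to Ng\^o--Genestier \cite{ngo-genestier:alcoves} and no argument supplied for either part. So there is no proof in the paper to compare against; your proposal is essentially filling in what the authors take as known.

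Your argument for part~(1) via the local model diagram is correct and is exactly the standard justification the paper has in mind: $\varphi_1$ is a $\calG_I$-torsor, $\varphi_2$ is smooth and $\calG_I$-equivariant of the same relative dimension, and the $\calG_I$-orbits on $\bfM^{\rm loc}_{I,\Fpbar}$ are the Iwahori Schubert cells $\Iw x\Iw/\Iw\cong\bbA^{\ell(x)}$. Nothing to add.

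For part~(2), your diagnosis of the difficulty is right---the local model only sees the Hodge filtration, not $F$---but your middle paragraph oversells what one can actually do by ``choosing an adapted basis''. The description ``each fixed index contributes a Teichm\"uller unit eigenvector, the rest are $F$-nilpotent blocks'' is not how the Ng\^o--Genestier argument runs, and taken literally it is not correct: the KR stratum does not determine the isomorphism class of the $F$-crystal, so one cannot pin down $F$ in this way. What Ng\^o--Genestier actually do is combinatorial: they show, using the alcove/$\mu$-permissible description, that the relative positions encoded by $x$ force the rank of the semi-linear map induced by $F$ on $M_0/pM_0$ (equivalently, the rank of the Hasse--Witt matrix) to equal $\tfrac12\#\mathrm{Fix}(w)$. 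Your final paragraph acknowledges this, so the proposal is not wrong in spirit, but the heuristic in the middle should be replaced by a pointer to the actual rank computation rather than a basis-diagonalization story.
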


\begin{remark} \label{strat_by_sing}
We suggest to think of the KR stratification as a stratification by
singularities. One justification is the following: Let
$R\Psi\overline{\mathbb Q}_\ell$ be the sheaf of nearby cycles (where we
apply the nearby cycles functor to the constant sheaf $\overline{\mathbb
Q}_\ell$, $\ell\ne p$ a prime,  on the generic fiber over $\mathbb Q_p$ to
obtain a sheaf on the special fiber over $k$). Then the trace of Frobenius
on the stalks of $R\Psi\overline{\mathbb Q}_\ell$ is constant along the KR
strata. The reason is that the stalk at a point in the special fiber, and
the stalk at a point of the special fiber of the local model corresponding
to it via the local model diagram are isomorphic. On the local model,
however, the stratification is the stratification by orbits of a group
action, and the nearby cycles sheaf is equivariant for this action, so
clearly the stalk, and in particular the trace of Frobenius do not depend
on the choice of point in a fixed orbit.
Cf.~the paper \cite{haines-ngo:nearby-cycles} by Haines and Ng\^o,
Section 4.1.  

However, clearly points of different strata can have the same singularity,
i.~e.~can have smoothly equivalent, or even isomorphic stalks. So the
notion ``stratification by singularities'' has to be taken with a grain of
salt.
\end{remark}

\subsection{Numerical characterization of Schubert cells in the linear case}
\label{sec:23}
We recall the combinatorial description of ``alcoves''
of \cite{kottwitz-rapoport:alcoves} 
(which we, however, have to adapt to our normalization).
We write $\mathbf 1 = (1,\dots, 1)\in \mathbb Z^n$.
An extended alcove is a tuple $(x_i)_{i=0,\dots, n-1}$, $x_i \in
\mathbb Z^n$ such 
that, setting $x_n := x_0 - \mathbf 1$, for all $i\in \{0,\dots, n-1\}$ we
have $x_{i+1}(j) = x_{i}(j)$ for all but precisely one $j$, 
where $x_{i+1}(j) = x_{i}(j) - 1$.

The relationship between alcoves and extended alcoves is as follows.
We identify the fixed diagonal torus $T \subset GL_n$ with $\mathbb G_m^n$
in the obvious way, and correspondingly have $X_*(T) = \mathbb Z^n$. Hence
the entries $x_i$ of an extended alcove $(x_i)_i$ can be viewed as elements
of $X_*(T)$, and the conditions we impose ensure that they are the vertices
of an alcove in $X_*(T)_{\mathbb R}$. On the other hand, we can associate
to each extended alcove the sum $\sum_j x_0(j) \in \mathbb Z =
\pi_1(GL_n)$. 
The choices we made yield an identification  $\widetilde{W} = S_n \ltimes
\mathbb Z^n$, and we see that $\widetilde{W}$ acts simply transitively on the set of extended alcoves.
The alcove $(\omega_i)_i$, where $\omega_i =
(-1^{(i)}, 0^{(n-i)})$, is called the standard alcove. We use it as a base
point to identify $\widetilde{W}$ with the set of extended alcoves.
Similarly, we identify the affine Weyl group $W_a$ with the set of alcoves
in $X_*(T)_{\mathbb R}$ (as defined above). We obtain a commutative diagram
(the short exact sequence in the first row was discussed above)
\[
\xymatrix{
    W_a \ar[r] \ar[d]^{\cong} & \widetilde{W} \ar[d]^{\cong} \ar[r] &
    \pi_1(GL_n) \ar[d]^{=} \\
    \{ \text{alc.~in } X_*(T)_{\mathbb R} \}\ar[r] & \{ \text{ext'd alc.}
    \} \ar[r]
    & \mathbb Z
    }
\]

\begin{defn} (\cite{kottwitz-rapoport:alcoves}) \label{muperm}
Let $\mu = (1^{(r)}, 0^{(n-r)})$ be a dominant minuscule coweight. An
extended alcove $(x_0,\dots, x_{n-1})$ is called $\mu$-permissible, if for
all $i$:
\begin{enumerate}
\item
$\omega_i \le x_i \le \omega_i + \mathbf 1$ (where $\le$ is understood
component-wise),
\item
$\sum x_i := \sum_{j=1}^n x_i(j) = n-r-i$.
\end{enumerate}
\end{defn}


Given an alcove $x = (x_0, \dots, x_{n-1})$, we define
\[
r_{ij} = r_{ij}(x) = \sum_{k = j+1}^i (\omega_i(k) - x_i(k) + 1),\quad
i,j\in\{0,\dots, n-1\}
\]
where we identify $\{1,\dots, n\}$ with $\mathbb Z/n\mathbb Z$ and 
the sum is over the elements $j+1, j+2, \dots, i-1, i\in \mathbb
Z/n\mathbb Z$. (For instance, $r_{0,n-1} = \omega_0(n) - x_0(n) +1$.)
Since 
\[
x_i(j) = r_{ij} - r_{i,j-1} + \omega_i(j) + 1,
\]
we have

\begin{lm}
The alcove $x$  is uniquely determined by the tuple $(r_{ij}(x))_{i,j}$.
\end{lm}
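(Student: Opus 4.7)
The plan is essentially a one-line verification: the displayed formula
\[
x_i(j) = r_{ij} - r_{i,j-1} + \omega_i(j) + 1
\]
immediately preceding the lemma already expresses every component of every $x_i$ in terms of the numbers $r_{ij}(x)$ and the fixed standard-alcove vertices $\omega_i$. Since the $\omega_i$ depend only on the indices (not on $x$), once the tuple $(r_{ij}(x))_{i,j}$ is given, each $x_i(j)$ is determined, and hence so is the alcove $x$.

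To justify the displayed formula, I would start from the definition
\[
r_{ij}(x) = \sum_{k=j+1}^{i}\bigl(\omega_i(k) - x_i(k) + 1\bigr)
\]
and compute the difference $r_{ij}(x) - r_{i,j-1}(x)$. The two sums differ only in the single index $k=j$, which occurs in $r_{i,j-1}$ but not in $r_{ij}$, so
\[
r_{ij}(x) - r_{i,j-1}(x) = -\bigl(\omega_i(j) - x_i(j) + 1\bigr),
\]
and solving for $x_i(j)$ gives the formula. Hence the map $x \mapsto (r_{ij}(x))_{i,j}$ is injective.

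The only point requiring a bit of care is the cyclic indexing: the index set $\{1,\dots,n\}$ is identified with $\mathbb Z/n\mathbb Z$, and when $j$ and $j-1$ straddle the wrap-around at $n$ one has to check that the single-index cancellation argument still goes through. This is a pure bookkeeping issue, not a genuine obstacle, and it is in fact built into the cyclic convention adopted in the definition of $r_{ij}$. No further combinatorial input (such as the $\mu$-permissibility conditions) is needed for the lemma itself.
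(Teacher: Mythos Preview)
Your proposal is correct and matches the paper's own argument exactly: the paper simply records the formula $x_i(j) = r_{ij} - r_{i,j-1} + \omega_i(j) + 1$ immediately before the lemma and deduces injectivity from it, just as you do. Your verification of that formula and your remark on the cyclic indexing are a little more detailed than what the paper writes, but the approach is identical.
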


Note that $r_{ii} = \sum \omega_i - \sum x_i + n = r$ is independent of
$x$, so that we can (and often will silently) omit the $r_{ii}$ from the
data.
Let us interpret the numbers $r_{ij}$ in terms of lattice chains. We know
that the set of extended alcoves is in bijection with the
extended affine Weyl group, and thus with the set of Iwahori orbits in
the affine flag variety. Explicitly, the alcove $(x_0,\dots, x_{n-1})$
corresponds to the $I$-orbit of the lattice chain $(\calL^x_0\subset \cdots
\subset \calL^x_{n-1})$
with
\[
\calL^x_i = \langle t^{x_i(1)}e_1, \dots, t^{x_i(n)}e_n \rangle
\]
(in fact, this lattice chain is the unique $T(\kappa[[t]])$-fixed point in
the corresponding $I$-orbit).

Then for any $\mu$-permissible alcove $x$ we have
\[
r_{ij}(x) = \dim_\kappa ( \calL^x_{i}+\stdt_{j'} / \stdt_{j'}),
\]
where $j' = j$ if $i > j$, and $j' = j-n$ otherwise. In fact,
\begin{eqnarray*}
\dim_\kappa ( \calL^x_{i}+\stdt_{j'} / \stdt_{j'}) & = &
\# \left\{ k\in \{1, \dots, n\};\ x_i(k) < \omega_{j'}(k) \right\}\\
& = & \# \left\{ k\in [j+1, i];\ x_i(k) < \omega_{j'}(k) \right\} =
r_{ij}(x),
\end{eqnarray*}
because $x_i(k) \ge \omega_i(k)$ for all $i$, $k$, and $\omega_i(k) =
\omega_{j'}(k)$ if (and only if) $k\not\in[j+1, i]$.
On the other hand, if we replace the chain $(\calL^x_i)_i$ by
another lattice chain in the same $I$-orbit, then the dimensions on the
right hand side of the above equation do not change, so we have

\begin{prop}
With respect to the natural bijection of Schubert cells (i.~e.~$I$-orbits
in $\Flag_{GL_n}$) and extended alcoves as above, the Schubert cell
corresponding to a $\mu$-permissible alcove $x$ is the locus of lattice
chains $(\calL_i)_i$ with
\[
\dim_\kappa ( \calL_{i}+\stdt_{j'} / \stdt_{j'}) = r_{ij}(x),
\]
for all $i$,$j$.
\end{prop}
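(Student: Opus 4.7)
The plan is to assemble the two ingredients that the preceding discussion has already put in place: the explicit computation of $r_{ij}(x)$ as a dimension for the distinguished $T(\kappa[[t]])$-fixed chain $\calL^x_\bullet$, and the fact (from the previous lemma) that the tuple $(r_{ij}(x))_{i,j}$ determines the $\mu$-permissible alcove $x$. Concretely, I would argue as follows.

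First, I would verify that the dimensions $\dim_\kappa(\calL_i + \stdt_{j'}/\stdt_{j'})$ are constant on each $\Iw$-orbit. This is immediate from the fact that the standard Iwahori $\Iw$ is precisely the stabilizer of the chain $(\stdt_j)_j$: for any $g \in \Iw$, multiplication by $g$ induces a $\kappa$-linear isomorphism
\[
(\calL_i + \stdt_{j'})/\stdt_{j'} \isomarrow (g\calL_i + g\stdt_{j'})/g\stdt_{j'} = (g\calL_i + \stdt_{j'})/\stdt_{j'}.
\]
Combined with the displayed computation preceding the proposition, this shows that for every point of the Schubert cell attached to $x$ the required equality $\dim_\kappa(\calL_i + \stdt_{j'}/\stdt_{j'}) = r_{ij}(x)$ holds, proving one inclusion.

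For the converse inclusion, I would use that the Schubert cells partition the ``$\mu$-admissible'' locus of the affine flag variety, indexed by $\mu$-permissible alcoves via the bijection with Iwahori orbits. Suppose $(\calL_i)_i$ is a lattice chain with $\dim_\kappa(\calL_i + \stdt_{j'}/\stdt_{j'}) = r_{ij}(x)$ for all $i,j$. Let $y$ be the extended alcove corresponding to the $\Iw$-orbit of $(\calL_i)_i$. Applying the first step to $y$ in place of $x$, one obtains $r_{ij}(y) = \dim_\kappa(\calL_i + \stdt_{j'}/\stdt_{j'}) = r_{ij}(x)$ for all $i,j$. The preceding lemma then forces $y = x$, so $(\calL_i)_i$ lies in the Schubert cell associated with $x$.

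The one point that needs a little care is that the numbers $r_{ij}$ are defined a priori only for $\mu$-permissible alcoves, so in applying the lemma in the converse step one must know that $y$ is itself $\mu$-permissible. This follows because the lattice chain conditions $\scrL_i \subseteq \stdt_{i}$ and $t\stdt_{i} \subseteq \scrL_i$ (cf.~\eqref{eq:26}) translate exactly into the inequalities $\omega_i \le y_i \le \omega_i + \mathbf 1$, while $\dim \calL_i = n-i$ fixes the sum $\sum y_i = n-r-i$; these are precisely the two conditions of Definition \ref{muperm}. I do not expect any real obstacle here — the essential calculation is already displayed in the text, and the rest is a bookkeeping exercise matching orbits, alcoves, and dimension data.
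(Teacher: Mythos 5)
Your proof takes essentially the same route as the paper: the computation of $r_{ij}(x)$ as $\dim_\kappa(\calL^x_i + \stdt_{j'}/\stdt_{j'})$ for the $T(\kappa[[t]])$-fixed chain, the Iwahori-invariance of these dimensions, and the fact (the preceding lemma) that the tuple $(r_{ij})$ determines the alcove. The paper's own proof is terse --- it records only the forward inclusion via Iwahori-invariance and leaves the rest implicit --- so your explicit treatment of the converse inclusion and of the $\mu$-permissibility of the alcove $y$ is a correct filling-in of detail rather than a different method. One small slip: ``$\dim\calL_i = n-i$'' should instead be the Grassmannian rank condition, namely $\dim_\kappa(\calL_i/t\stdt_i)=r$ (equivalently $\dim_\kappa(\stdt_i/\calL_i)=n-r$), which is what in fact pins down $\sum y_i = n-r-i$.
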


It does not seem possible to give a simple characterization which tuples $(r_{ij})$
can occur (i.~e.~give a non-empty locus in the affine flag variety).

With the identification of the local model with a closed subscheme of the
affine flag variety above, we see that
\[
r_{ij} = \mathop{\rm rk} (\alpha_{j'-1}\circ \cdots \circ\alpha_i)(\mathcal
F_i),
\]
where the $\alpha_\bullet$ denote the transition maps in the local model.

It is clear that we can also use this characterization to describe the KR
stratification of the special fiber of a Shimura variety
corresponding to this local model. (See below
for an explicit formulation in terms of abelian varieties of the
corresponding numerical characterization of KR strata in the Siegel
case.)

\subsection{Numerical characterization of Schubert cells in the symplectic case}

In the symplectic case, we again use the description of alcoves as in
\cite{kottwitz-rapoport:alcoves}, adapted to our normalizations.

An extended alcove for the group $GSp_{2g}$ is an alcove $x$ for
$GL_{2g}$ which satisfies the duality condition:
\[
x_i(j) + x_{2g-i}(2g-j+1) = c-1 \text{ for all } i, j,
\]
for some $c= c(x)\in \mathbb Z$ depending only on $x$ (but not on $i$, $j$).
Here we set $x_{2g} = x_0 + \mathbf 1$.
In particular, the standard alcove $\omega$ satisfies this condition with
$c(\omega) = 0$. As above, let $\mu = (1,\dots, 1, 0, \dots, 0) =
(1^{(g)}, 0^{(g)})$. An alcove for $GSp_{2g}$ is $\mu$-permissible if it is
$\mu$-permissible for $GL_{2g}$ (for the same $\mu$, interpreted as a
coweight for $GL_{2g}$). If $x$ is a $\mu$-permissible alcove, then
$c(x)=1$.


As above, we define
\[
r_{ij} = r_{ij}(x) = \sum_{k = j+1}^i (\omega_i(k) - x_i(k) + 1),\quad
i,j\in\{0,\dots, 2g-1\}.
\]
For $0\le i\le g$, we have
\[
\omega_{2g-i}(k) - x_{2g-i}(k) + 1 = -\omega_{i}(2g-k+1) + x_i(2g-k+1)
- c(x) +1,
\]
so we can express all $r_{ij}$ in terms of $x_0,\dots, x_g$ (or,
alternatively, in terms of $x_g, \dots, x_{2g} (=x_0+\mathbf 1)$).
Of course, we again have, with notation as in the $GL_{2g}$-case,
\[
r_{ij}(x) = \dim \mathcal L^x_i + \lambda_{j'}/\lambda_{j'}.
\]
For $0<j<2g$ we define
\[
r_{2g,j}(x) := \dim \mathcal L^x_{2g} + \lambda_{j}/\lambda_{j} =
r_{0j}(x).
\]
We obtain that in the symplectic case
the $r_{ij}$ for $i = g, \dots, 2g$, $j=0, \dots, 2g-1$, $i\ne j$ ,
determine the alcove $x$ uniquely.

\begin{cor}
With respect to the natural bijection of Schubert cells (i.~e.~$I$-orbits
in $\Flag_{GSp_{2g}}$) and extended alcoves as above, the Schubert cell
corresponding to a $\mu$-permissible alcove $x$ is the locus of lattice chains $(\calL_i)_i$
with
\[
\dim_\kappa ( \calL_{i}+\stdt_{j'} / \stdt_{j'}) = r_{ij}(x),
\]
for all $i = g, \dots, 2g$, $j=0, \dots, 2g-1$, $i\ne j$.
\end{cor}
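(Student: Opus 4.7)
The plan is to reduce the symplectic case to the linear case via the closed embedding $\Flag_{GSp_{2g}} \hookrightarrow \Flag_{GL_{2g}}$, which identifies symplectic Schubert cells with the intersections of $GL_{2g}$-Schubert cells with the self-dual locus. Under this embedding, a $\mu$-permissible alcove $x$ for $GSp_{2g}$ is in particular a $\mu$-permissible alcove for $GL_{2g}$, and the symplectic Iwahori orbit through the lattice chain $(\mathcal L^x_i)_{i=0,\dots,2g-1}$ coincides with the intersection of the $GL_{2g}$-Iwahori orbit with the symplectic affine flag variety.

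By the Proposition of the previous subsection, the $GL_{2g}$-Schubert cell attached to $x$ is cut out inside $\Flag_{GL_{2g}}$ by the equations
\[
\dim_\kappa(\mathcal L_i + \stdt_{j'}/\stdt_{j'}) = r_{ij}(x), \qquad i,j \in \{0,\dots,2g-1\}.
\]
So I first restrict the family of conditions to lattice chains $(\mathcal L_i)_i$ that already lie in $\Flag_{GSp_{2g}}$ (so the self-duality is automatic), and then I argue that the equations indexed by $i \in \{0,\dots,g-1\}$ are redundant, leaving only the stated range $i = g,\dots,2g$ (with $i \ne j$, since the trivial case $r_{ii}=g$ carries no information).

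For this redundancy, I would use precisely the identity derived just before the statement of the corollary,
\[
\omega_{2g-i}(k) - x_{2g-i}(k) + 1 = -\omega_i(2g-k+1) + x_i(2g-k+1) - c(x) + 1,
\]
combined with $c(x)=1$ for $\mu$-permissible $x$. Summing in $k$ gives an explicit relation expressing each $r_{i,j}$ with $i<g$ as a linear combination of the numbers $r_{i',j'}(x)$ with $i' \in \{g,\dots,2g\}$. Geometrically, this is the translation of the fact that for a self-dual chain $(\mathcal L_i)_i$, the dimensions $\dim \mathcal L_i + \stdt_{j'}/\stdt_{j'}$ for $i<g$ are determined by the dimensions in the range $i\ge g$ via the perfect pairing induced by $\psi$. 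Thus on the symplectic locus the full system of $GL_{2g}$-equations is equivalent to the reduced system in the corollary.

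The argument is essentially bookkeeping once the linear case and the duality relation are in hand; the only mild subtlety to check is that the reduced system of equations is truly equivalent, not merely implied, by the symplectic Schubert condition. This follows because on a self-dual chain the missing $r_{ij}$ with $i<g$ are uniquely recovered from the given ones, so two $\mu$-permissible symplectic alcoves having the same numerical data in the stated range automatically agree on all $r_{ij}$, and the linear Proposition then forces them to define the same $GL_{2g}$-Schubert cell, hence the same symplectic Schubert cell.
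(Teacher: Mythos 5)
Your proof is correct and follows essentially the same route as the paper, which presents the corollary as an immediate consequence of the linear Proposition together with the duality identity $\omega_{2g-i}(k) - x_{2g-i}(k) + 1 = -\omega_i(2g-k+1) + x_i(2g-k+1) - c(x) + 1$ (and $c(x)=1$) that lets one express all $r_{ij}$ in terms of those with $i \ge g$. Your reduction via $\Flag_{GSp_{2g}} \hookrightarrow \Flag_{GL_{2g}}$ and the observation that self-duality of the chain makes the $i<g$ equations redundant is exactly the implicit content of the paper's derivation.
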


Finally, let us make these quantities explicit in terms of the
moduli space $\mathcal A_I$ of chains of abelian varieties.
Let $(B_i)_i \in \mathcal A_I(k)$. We denote by $\omega_i \subset
H^1_{DR}(B_i/k)$ the Hodge filtration, and by $^\perp$  the orthogonal
complement of a subspace of $H^1_{DR}(B_0)$ with respect to the pairing
induced by the (principal) polarization of $B_0$. Furthermore, we denote by
$\alpha_{ij} \colon H^1_{DR}(B_j) \rightarrow H^1_{DR}(B_i)$ the natural
map ($0 \le i\le j \le g$).

\begin{cor} \label{cor_num_char}
Let $x$ be a $\mu$-admissible alcove. 
With notation as above, the point $(B_i)_i$ lies in the KR stratum associated with $x$ 
if and only if for all $0 \le i, j\le g$:
\begin{eqnarray*}
&& \dim \omega_j / \alpha_{ji}(\omega_i) = g - r_{2g-i, 2g-j}(x) \text{ if } i
> j \\
&& \dim H^1_{DR}(B_i)/(\omega_i + \alpha_{ij}(H^1_{DR}(B_j))) =
j-i-r_{2g-i, 2g-j}(x) \text{ if } i< j \\
&& \dim \alpha_{0i}(\omega_i) + \alpha_{0j}(H^1_{DR}(B_j))^\perp = r_{2g-i,
j}(x) + j
\end{eqnarray*}
\end{cor}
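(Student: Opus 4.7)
The proof is a direct translation, via the local model diagram
\eqref{eq:29}, of the preceding Corollary (which characterizes symplectic
Schubert cells by the dimensions $r_{ij}(x) = \dim_k(\mathcal L_i +
\lambda_{j'}/\lambda_{j'})$). The plan is as follows. Pick any lift
$(\underline B_\bullet, \xi) \in \widetilde{\mathcal A}_I(k)$ of
$(B_\bullet)$. Then $\xi$ identifies the chain $H^1_{DR}(B_\bullet)$ with
$\Lambda_{-\bullet}\otimes k$ and carries the Hodge filtrations $\omega_i$
to a $k$-point $(\mathcal F_i)$ of $\mathbf M^{\rm loc}_{I,k}$, which under
the embedding in~\eqref{eq:26} corresponds to a lattice chain
$(\mathcal L_i)_i$ with $\mathcal L_i/t\lambda_i = \mathcal F_i$. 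By
construction of the KR stratification, $(B_\bullet)$ lies in
$\mathcal A_{I,x}$ if and only if $(\mathcal L_i)_i$ lies in the Schubert
cell associated to $x$. Hence what remains is to rewrite each
$\dim_k(\mathcal L_i + \lambda_{j'}/\lambda_{j'})$, for
$i\in\{g,\dots,2g\}$ and $j\in\{0,\dots,2g-1\}$ with $i\ne j$, in terms of
the $\omega_i$, the transition maps $\alpha_{ij}$, and the pairing induced
by $\lambda_0$ on $H^1_{DR}(B_0)$.

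The three cases of the statement correspond to the three ranges of the
index $j$ relative to $i$. For $g\le j<i$, the reindexing
$(i,j)\mapsto (2g-i, 2g-j)$ brings the condition on
$\mathcal L_i + \lambda_{j}$ into the ``second half'' of the chain, where
it translates via $\xi$ to the dimension of $\alpha_{ji}(\omega_i)$ inside
$\omega_j$; subtracting from $g = \dim \omega_j$ gives the first identity.
For $g\le i<j\le 2g-1$, the same reindexing produces a condition on the
cokernel $H^1_{DR}(B_i)/(\omega_i + \alpha_{ij}(H^1_{DR}(B_j)))$; combined
with the known codimension $j-i$ of $\alpha_{ij}(H^1_{DR}(B_j))$ in
$H^1_{DR}(B_i)$, this yields the second identity. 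Finally, for
$j\in\{0,\dots,g-1\}$ the self-duality of the standard lattice chain
under $\psi$ identifies $\lambda_j$ with the $\psi$-annihilator of
$\lambda_{2g-j}$ (up to a scaling by $t$). Hence the sum
$\mathcal L_i + \lambda_j$ is carried under $\xi$ into the sum
$\alpha_{0, 2g-i}(\omega_{2g-i}) + \alpha_{0j}(H^1_{DR}(B_j))^\perp$
inside $H^1_{DR}(B_0)$; adding the codimension $j$ of the orthogonal
complement gives the third identity.

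The main obstacle is the index bookkeeping: carefully tracking the shift
$i\mapsto 2g - i$ forced by the identification $H^1_{DR}(B_i)\cong
\Lambda_{-i}\otimes k$ together with the periodicity
$\Lambda_{i+2g}=\pi^{-1}\Lambda_i$, and making precise the symplectic
duality invoked in the third case. Because the trivialization $\xi$ is
built to preserve the polarizations up to scalars, this duality
translation is canonical, so once the dictionary is set up the three
identities follow from elementary dimension counts.
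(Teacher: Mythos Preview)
Your approach is exactly the paper's: lift through the local model diagram, identify $H^1_{DR}(B_i)$ with $\lambda_{-i}/t\lambda_{-i}$ and $\omega_i$ with $\mathcal L_{-i}/t\lambda_{-i}$, then rewrite each $r_{IJ}(x)=\dim(\mathcal L_I+\lambda_{J'})/\lambda_{J'}$ via periodicity (shift by $2g$) and, in the third range, the self-duality of the standard chain. Two small bookkeeping slips to fix: the index ranges you assign to the first two identities are swapped (for the corollary's case $i>j$ the Schubert indices $(I,J)=(2g-i,2g-j)$ satisfy $I<J$, not $J<I$), and in the third case the ``$+j$'' is the \emph{dimension} of $\alpha_{0j}(H^1_{DR}(B_j))^\perp=\lambda_{j-2g}/t\lambda_0$, not its codimension.
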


\begin{proof}
Clearly, the above equalities determine all $r_{ij}(x)$ for $g \le i \le 2g$,
$0 \le j \le 2g-1$, $i\ne j$, so there is at most one $x$ such that they
are satisfied. Therefore we only have to check that the equalities above
translate to the description of the $r_{ij}$ in terms of lattice chains as
given above. Denote by $(\mathcal L_i)_i$ the lattice chain (over $k[[t]]$)
corresponding to the point $\omega_i \subset H^1_{DR}(B_i)$. To account for
the contravariance, here we identify the chain $(H^1_{DR}(B_i))_{i=0,\dots,
g}$ with the chain $(\lambda_{-i}/t\lambda_{-i})_{i=0, \dots, g}$, such
that $\omega_i$ gives rise to $t\lambda_{-i} \subset \mathcal L_{-i} \subset
\lambda_{-i}$. Using duality and periodicity, we can extend the chain
$(\mathcal L_i)_{i=-g, \dots, 0}$ to a ``complete'' chain $(\mathcal
L_i)_{i\in\mathbb Z}$.

We have, for $i>j$ (i.~e.~$g \le 2g-i < 2g-j \le 2g$),
\begin{eqnarray*}
\dim \omega_j / \alpha_{ji}(\omega_i) 
& = & \dim \mathcal L_{-j}/(\mathcal L_{-i} +  t\lambda_{-j})  \\
& = & \dim \mathcal L_{-j}/t\lambda_{-j}- \dim (\mathcal L_{-i} +
t\lambda_{-j})/t\lambda_{-j} \\
& = & g - \dim (\mathcal L_{2g-i} +
\lambda_{-j})/\lambda_{-j} \\
& = & g - r_{2g-i, 2g-j}(x),
\end{eqnarray*}
and for $i<j$ (i.~e.~$g \le 2g-j < 2g-i \le 2g$),
\begin{eqnarray*}
\dim H^1_{DR}(B_i)/(\omega_i + \alpha_{ij}(H^1_{DR}(B_j))) 
& = & \dim \lambda_{-i}/(\mathcal L_{-i} + \lambda_{-j}) \\
& = & \dim \lambda_{2g-i} / (\mathcal L_{2g-i} + \lambda_{2g-j}) \\
& = & \dim \lambda_{2g-i}/\lambda_{2g-j} - \dim (\mathcal L_{2g-i} +
\lambda_{2g-j})/\lambda_{2g-j} \\
& = & j-i-r_{2g-i, 2g-j}(x)
\end{eqnarray*}
and finally, for all $0 \le i, j \le g$ (i.~e.~$0\le j \le g \le 2g-i \le
2g$):
\begin{eqnarray*}
\dim \alpha_{0i}(\omega_i) + \alpha_{0j}(H^1_{DR}(B_j))^\perp
& = & \dim (\mathcal L_{-i} +t\lambda_0)/t\lambda_0 +
\lambda_{-2g+j}/t\lambda_0  \\
& = & \dim (\mathcal L_{-i} + \lambda_{j-2g})/\lambda_{j-2g} + \dim
\lambda_{j-2g}/t\lambda_0 \\
& = & \dim (\mathcal L_{2g-i} + \lambda_{j})/\lambda_{j} + j \\
& = & r_{2g-i, j}(x) + j.
\end{eqnarray*}
\qed
\end{proof}

This characterizes KR strata in $\mathcal A_I$ by the invariants defined
(and computed for $g\le 3$) in \cite{yu:krstrata}. In fact, the invariant
in the first line is $\sigma_{ji}$, in the second line we have
$\sigma'_{ij}$, and in the third line $d_{ij}$, with the notation of
loc.~cit. We discuss an explicit example:

\begin{eg}
Let $g=3$. We write down a couple of $\mu$-permissible alcoves, and compute
some of their invariants which appear in the corollary.

{\footnotesize
\begin{tabular}{ll}
$\tau$: &  (0,0,0,1,1,1), (0,0,0,0,1,1), (0,0,0,0,0,1), (0,0,0,0,0,0),
(-1,0,0,0,0,0), (-1, -1, 0,0,0,0) \\
$s_2s_3\tau:$ & (0,1,0,1,0,1), (0,0,0,1,0,1), (0,0,0,0,0,1),
(0,0,0,0,0,0), (-1, 0,0,0,0,0), (-1, 0,-1,0,0,0)\\
$s_1\tau:$ & (0,0,0,1,1,1), (0,0,0,0,1,1), (0,0,0,0,1,0), (0,0,0,0,0,0),
(0,-1,0,0,0,0), (-1,-1,0,0,0,0)\\
$s_0s_1\tau:$ & (0,0,0,1,1,1), (0,0,0,0,1,1), (-1,0,0,0,1,1),
(-1,0,0,0,0,1), (-1,-1,0,0,0,1), (-1,-1,0,0,0,0)\\
$s_2s_0s_1\tau:$ & (0,0,0,1,1,1), (0,0,0,1,0,1), (-1,0,0,1,0,1),
(-1,0,0,0,0,1), (-1,0,-1,0,0,1), (-1,0,-1,0,0,0)
\end{tabular}
}

So $\tau$ coincides with the base alcove $\omega_\bullet$ up to a shift,
and we obtain the other alcoves by applying the corresponding simple
reflections (using their description in (\ref{desc_si})). With these descriptions, it is
straight-forward to compute all the invariants $r_{ij}$, and also those in
the corollary above, for instance we obtain
\[
\begin{array}{|l|l|l|l|l|l|}
\hline
 & \sigma_{02} & \sigma'_{02} & \sigma_{03} & \sigma'_{03} & d_{12}\\\hline
\tau &           2 & 2 & 3 & 3 & 2 \\\hline
s_2s_3\tau &     2 & 1 & 3 & 2 & 3 \\\hline
s_1\tau &        2 & 2 & 3 & 3 & 2 \\\hline
s_0s_1\tau &     1 & 2 & 2 & 3 & 2 \\\hline
s_2s_0s_1\tau &  1 & 2 & 2 & 3 & 3 \\\hline
\end{array}
\]
\end{eg}
As claimed, these values agree with those obtained in
\cite{yu:krstrata}.

\section{The minimal KR stratum}
\label{sec:03}

\subsection{The unitary group.}
\label{unit_gp}
Let
$x_0=(A_0,\lambda_0)$ be a $g$-dimensional superspecial principally
polarized 
abelian variety over $k$. Denote by $G_{x_0}$ the automorphism group
scheme 
over $\Z$ associated to $x_0$; for any commutative ring $R$, the
group of its $R$-valued points is
\begin{equation}\label{eq:31}
  G_{x_0}(R)=\{\,x\in (\End(A_0)\otimes R)^\times\ ;\ x' x=1\, \},
\end{equation}
where $x\mapsto x'$ is the Rosati involution induced by $\lambda_0$.
Let $(M_0,\<\, ,\>_0)$ be the \dieu module of $x_0$. Let 
\begin{equation}
  \wt M_0:=\{ m\in M_0\ ; \ F^2 m+p m=0\, \}
\end{equation}
be the skeleton of $M_0$; this is a \dieu module over $\F_{p^2}$
together with a quasi-polarization induced from $M_0$, and
one has $\wt M_0\otimes_{W(\F_{p^2})} W(k)=M_0$. We can choose a basis
$e_1, \dots e_{2g}$ (in $\wt M_0$) for $M_0$ such that
\begin{equation}\label{eq3.3}
  Fe_{g+i}=-e_i \quad \text{and } \ F(e_i)=p e_{g+i}, \quad
  \forall\, i=1,\dots, g.
\end{equation}
and the representing matrix for
$\<\, ,\>_0$ is
\[
\begin{pmatrix}
  0 & \wt I_g \\
  -\wt I_g & 0
\end{pmatrix},\quad \wt I_g=\text{anti-diag(1,\dots,1)}.\]

Let $V_0:=\wt M_0/V \wt M_0$ and ${\rm pr}\colon W(k)\to k$ be the
natural map. Define $\varphi_0(x,y):=\<x,Fy\>_0$ and $\ol
\varphi_0:={\rm pr}(\varphi_0)$ on $M_0$.
The pairing $\ol \varphi_0$ induces a non-degenerate
Hermitian $\F_p$-bilinear form
(again denoted by)
\[ \ol \varphi_0:V_0\times V_0\to \F_{p^2}. \]
Indeed, using the basis $\{e_{g+1},\dots, e_{2g}\}$ for $V_0$, the pairing
$\varphi_0$ is simply given by
\[ \ol \varphi_0((a_i), (b_{i}))=\sum_{i} a_i \bar b_{g+1-i}. \]
Denote by $\ol {\bfG}_0$ the unitary group $U(V_0,\ol \varphi_0)$ over
$\F_p$. Note that the group $\Aut(M_0/VM_0, \varphi_0)$
consisting of elements $h\in
\Aut(M_0/VM_0)$ that preserve the pairing $\varphi_0$ is not
 $\ol{\bfG}_0(k)$ but rather $\ol {\bfG}_0(\Fp)$, a finite group.
Indeed, using the basis $\{e_{g+1},\dots, e_{2g}\}$, one shows
that the group $\Aut(M_0/VM_0, \varphi_0)$ is the group of all $h \in
GL_g(k)$ such that $h^t\cdot\wt I_g h^{(p)}=\wt I_g$, and since these $h$
automatically lie in $GL_g(\mathbb F_{p^2})$, it 
is precisely $\ol {\bfG}_0(\Fp)$.

\subsection{}

Let $B_p$ be the quaternion division algebra over $\Qp$ and $O_{B_p}$
be its ring of integers. We choose a presentation
\[ O_{B_p}=W(\F_{p^2})[\Pi], \quad \Pi^2=-p \text{ \  and \ } \Pi\, a
= a^\sigma \Pi, \quad \forall\, a\in W(\F_{p^2}). \] We can write
$\wt M_0=\oplus_{i=1}^g O_{B_p} f_i$, where $f_i=e_{g+i}$ and $\Pi$ acts
by the Frobenius $F$. We have $\Pi^*=V=-F=-\Pi$, where $*$ is the
canonical involution. One easily
computes
\[ (\alpha+\beta\Pi)^*=\alpha^\sigma+\Pi^*
\beta^*=\alpha^\sigma-\beta \Pi. \] By Tate's theorem on 
homomorphisms of abelian varieties and \dieu modules, we have
the identifications
\begin{equation}
  G_{x_0}(\Z_p)=\Aut_{\rm DM}(M_0,\<\, ,\>_0)=\Aut_{O_{B_p}}(\wt
  M_0,\<\, ,\>_0).
\end{equation}
We also have
\begin{equation}
  G_{x_0}\otimes \Fp=\Aut_{O_{B_p}}(\wt
  M_0\otimes_{\Zp} \Fp,\<\, ,\>_0),
\end{equation}
regarded as algebraic groups over $\Fp$. Since the subspace $V\wt
M_0/p \wt M_0$ is stable under the action of $G_{x_0}\otimes \Fp$,
we have a homomorphism of algebraic groups
\begin{equation}
  \rho:G_{x_0}\otimes \Fp\to \Aut(V_0).
\end{equation}
The following lemma is easily verified.

\begin{lm} \label{31} \

{\rm (1)} One has $\varphi_0(y,x)=\varphi_0(x,y)^\sigma$ for $x,y\in \wt
M_0$.

{\rm (2)} If $a\in W(\F_{p^2})$, then $\varphi_0(ax, y)=
\varphi_0(x,a^*y)$ for $x, y\in \wt M_0$. If  $a\in W(\F_{p^2})\Pi$,
then $\varphi_0(ax, y)= \varphi_0(x,a^*y)^\sigma$ for $x, y\in \wt
M_0$. Consequently, we have
\[ \tr_{W(\F_{p^2})/\Zp}(ax,y)=\tr_{W(\F_{p^2})/\Zp}(x,a^*y), \quad
\forall \, a\in
  O_{B_p}\text{\ and\ }  x, y\in \wt M_0.\]

{\rm (3)} An element $h$ in $\Aut_{O_{B_p}}(\wt M_0)$ preserves
the pairing $\<\, , \>_0$ if and only if it preserves the pairing
$\varphi_0$. Consequently, the homomorphism $\rho$ factors through
the subgroup $\ol {\bfG}_0$.

{\rm (4)} The homomorphism $G_{x_0}(\Z_p)=\Aut_{\rm DM}
(M_0,\<\, ,\>_0)\to \ol {\bfG}_0(\Fp)$ is surjective.

{\rm (5)} The homomorphism $\rho$ induces an isomorphism $\rho:
(G_{x_0}\otimes \Fp)^{\rm red} \simeq  \ol {\bfG}_0$, where
$(G_{x_0}\otimes \Fp)^{\rm red}$ denotes the maximal reductive
quotient of $G_{x_0}\otimes \Fp$
\end{lm}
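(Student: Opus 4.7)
My plan is to derive both from the ambient structure on the skeleton: the relations $F^2 = -p$, $V = -F$ (the latter from $FV = p = -F^2$), the compatibility $\<Fx,y\>_0 = \<x,Vy\>_0^\sigma$ of the quasi-polarization with Frobenius, alternation of $\<,\>_0$, and the semilinearity $Fa^\sigma = aF$ for $a\in W(\F_{p^2})$. For (1), I would simply expand
\begin{equation*}
\varphi_0(y,x) = \<y,Fx\>_0 = -\<Fx,y\>_0 = -\<x,Vy\>_0^\sigma = \<x,Fy\>_0^\sigma = \varphi_0(x,y)^\sigma.
\end{equation*}
For (2), the case $a \in W(\F_{p^2})$ is bilinearity plus semilinearity, giving $\varphi_0(ax,y) = a\varphi_0(x,y) = \varphi_0(x,a^\sigma y) = \varphi_0(x,a^* y)$. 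The case $a = b\Pi$ reduces to the first by writing $ax = bFx$ and applying (1); the sign is supplied by the identity $a^* = -b\Pi$. The trace statement then drops out because $\tr_{W(\F_{p^2})/\Zp}$ is $\sigma$-invariant and absorbs the $\sigma$-twist in the $\Pi$-case.

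\noindent\textbf{Plan for part (3).} Since $h$ is $O_{B_p}$-linear it commutes with $F = \Pi$, so $h$ preserves $\<,\>_0$ if and only if it preserves $\varphi_0(x,y) = \<x,Fy\>_0$. One direction is immediate; for the converse I would invert $F$ on the rational span via $\<x,F^2 y\>_0 = -p\<x,y\>_0$, which yields $\<x,y\>_0 = -p^{-1}\varphi_0(x,Fy)$, a formula manifestly preserved by any $h$ that commutes with $F$ and preserves $\varphi_0$. Reducing modulo $V\wt M_0$ places the image of $\rho$ inside the isometry group of $\ol\varphi_0$, i.e., in $\ol{\bfG}_0$.

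\noindent\textbf{Plan for part (4).} My plan is a Teichm\"uller lifting argument. In the basis $(f_i) = (e_{g+i})$, the group $\Aut_{O_{B_p}}(\wt M_0)$ identifies with $GL_g(W(\F_{p^2}))$, and the $\varphi_0$-isometry condition becomes a concrete Hermitian equation $H^t J H^\sigma = J$ for a fixed antidiagonal matrix $J$ derived from $\wt I_g$. Given $\bar h \in \ol{\bfG}_0(\Fp)$ represented by $\bar H \in GL_g(\F_{p^2})$ satisfying the mod-$p$ version of this identity, I would lift each entry of $\bar H$ by its Teichm\"uller representative to obtain $H \in GL_g(W(\F_{p^2}))$. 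Because Teichm\"uller lifts are unique and commute with $\sigma$, the identity $H^t J H^\sigma = J$ lifts on the nose to $W(\F_{p^2})$; by (3), $H$ defines an element of $G_{x_0}(\Zp)$ reducing to $\bar h$.

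\noindent\textbf{Plan for part (5).} By (3), $\rho : G_{x_0}\otimes\Fp \to \ol{\bfG}_0$ is a morphism of algebraic $\Fp$-groups, and the Teichm\"uller argument of (4) works over any perfect extension $k/\Fp$ (lifting to $W(k)$), so $\rho$ is surjective on geometric points. The hard part will be showing $\ker\rho$ is unipotent. Any $h \in \ker\rho(k)$ is an $O_{B_p}$-linear isometry of $\wt M_0 \otimes_{\Zp} k$ that satisfies $(h-1)(\wt M_0\otimes k) \subset V\wt M_0\otimes k = \Pi(\wt M_0\otimes k)$; since $h$ commutes with $\Pi$, one finds
$(h-1)^2(\wt M_0\otimes k) \subset \Pi(h-1)(\wt M_0\otimes k) \subset \Pi^2(\wt M_0\otimes k) = 0$,
using $\Pi^2 = -p = 0$ in characteristic $p$. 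Hence $h$ is unipotent with $(h-1)^2 = 0$. A surjective homomorphism of algebraic groups with unipotent kernel onto a reductive target induces an isomorphism of maximal reductive quotients, giving $(G_{x_0}\otimes\Fp)^{\red} \isomarrow \ol{\bfG}_0$.
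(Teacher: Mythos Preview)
The paper itself gives no proof of this lemma (it simply says ``The following lemma is easily verified''), so there is nothing to compare against directly. Your arguments for (1), (2), (3) and the unipotence-of-kernel part of (5) are correct and are presumably what the authors had in mind.

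There is, however, a genuine gap in your argument for (4), and it propagates to the surjectivity claim in (5). The Teichm\"uller section $[\cdot]\colon \F_{p^2}\to W(\F_{p^2})$ is multiplicative and $\sigma$-equivariant, but it is \emph{not} additive. Hence a matrix identity such as $\bar H^{\,t} J \bar H^\sigma = J$, which involves sums of products of entries, does not lift to $W(\F_{p^2})$ under entrywise Teichm\"uller lifting. Concretely, take $p=3$, $g=2$, $J=\wt I_2$, and
\[
\bar H=\begin{pmatrix}1 & 2i\\ i & 2\end{pmatrix}\in GL_2(\F_9),\qquad i^2=-1.
\]
One checks $\bar H^{\,t}\wt I_2\bar H^\sigma=\wt I_2$ in $M_2(\F_9)$, but the entrywise Teichm\"uller lift $H=\left(\begin{smallmatrix}1 & -[i]\\ [i] & -1\end{smallmatrix}\right)$ satisfies $H^{\,t}\wt I_2 H^\sigma=\left(\begin{smallmatrix}0 & -2\\ -2 & 0\end{smallmatrix}\right)\ne\wt I_2$ in $M_2(W(\F_9))$. (A smaller slip: $\Aut_{O_{B_p}}(\wt M_0)$ is $GL_g(O_{B_p}^{\rm op})$, not $GL_g(W(\F_{p^2}))$; you only need that the latter embeds in the former, which is true and suffices for your strategy once the lifting issue is fixed.)

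The standard repair is a Hensel/successive-approximation argument: lift $\bar H$ arbitrarily to $H_1\in GL_g(W(\F_{p^2}))$, so that $H_1^{\,t}JH_1^\sigma-J\equiv 0\pmod p$; then correct $H_1$ by a factor $1+pX$ to improve the congruence to $p^2$, and iterate. Equivalently, one invokes that $G_{x_0}$ is a smooth affine group scheme over $\Zp$ (this is the parahoric group scheme for the relevant inner form, cf.\ Bruhat--Tits), so $G_{x_0}(\Zp)\to G_{x_0}(\Fp)$ is surjective; combined with your unipotence-of-kernel computation and Lang's theorem (so that $H^1(\Fp,\ker\rho)=0$), this yields both (4) and the surjectivity needed in (5).
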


\subsection{}
Let $d$ be an integer with $0\le d\le g$, and let $M$ be a \dieu
module over $k$ with
\[ VM_0\subset M\subset M_0, \quad \text{and}\quad \dim_k M/VM_0=d. \]
The subspace $M/VM_0$ defines an element in the Grassmannian ${\rm
  Gr}(d;V_0)(k)$ of $d$-dimensional subspaces of $V_0$.

\begin{lm}\label{32}
  Notation as above, the \dieu module $M$ is superspecial
  if and only if  $M/VM_0\in {\rm Gr}(d;V_0)(\F_{p^2})$.
\end{lm}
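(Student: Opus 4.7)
First I would note that the Dieudonn\'e-module condition on $M$ is automatic: since $M_0$ is superspecial one has $FM_0=VM_0$, hence $FM\subset FM_0=VM_0\subset M$ and $VM\subset VM_0\subset M$ for any $W(k)$-submodule $M$ with $VM_0\subset M\subset M_0$. Thus such an $M$ is determined by the $d$-dimensional $k$-subspace $\ol M:=M/VM_0\subset M_0/VM_0=V_0\otimes_{\F_{p^2}}k$.

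The key observation is that both $FM$ and $VM$ sit in the narrow window between $pM_0$ and $VM_0$. Explicitly, the skeleton relations $Fe_{g+i}=-e_i$, $Fe_i=pe_{g+i}$ (which force $Ve_i=-pe_{g+i}$ and $Ve_{g+i}=e_i$) give $F(VM_0)=FVM_0=pM_0$ and $V^2M_0=pM_0$. Since $VM_0\subset M\subset M_0$ we deduce $pM_0\subset FM\subset VM_0$ and $pM_0\subset VM\subset VM_0$. Consequently $M$ is superspecial, i.e.\ $FM=VM$, if and only if the images of $FM$ and $VM$ coincide inside the $g$-dimensional $k$-vector space $VM_0/pM_0$. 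These images are respectively $\bar F(\ol M)$ and $\bar V(\ol M)$, where
\[ \bar F,\ \bar V\colon M_0/VM_0\ \isomarrow\ VM_0/pM_0 \]
are the $\sigma$-semilinear (resp.\ $\sigma^{-1}$-semilinear) $k$-bijections induced by $F$ (resp.\ $V$); that these are bijections follows because $F,V\colon M_0\to VM_0$ are injective and $F(M_0)=V(M_0)=VM_0$.

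The superspecial condition thus becomes $\bar V^{-1}\bar F(\ol M)=\ol M$. A one-line computation on the skeleton basis $\{\bar e_{g+1},\dots,\bar e_{2g}\}$ of $V_0$ gives $\bar F(\bar e_{g+i})=\overline{-e_i}$ and $\bar V(\bar e_{g+i})=\bar e_i$, so the $\sigma^2$-semilinear map $\bar V^{-1}\bar F$ sends $\bar e_{g+i}$ to $-\bar e_{g+i}$. Hence $\bar V^{-1}\bar F=-\Phi$, where $\Phi$ denotes the canonical $\sigma^2$-semilinear Frobenius on $V_0\otimes_{\F_{p^2}}k$ coming from the $\F_{p^2}$-rational structure $V_0$. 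Since $\ol M$ is a $k$-subspace, stability under $-\Phi$ is equivalent to stability under $\Phi$, and by Galois descent this is the same as $\ol M$ being defined over $\F_{p^2}$, i.e.\ $\ol M\in{\rm Gr}(d;V_0)(\F_{p^2})$. The hard part will be the bookkeeping of the various semilinearities and the clean use of the identity $V^2M_0=pM_0$ that brings $FM$ and $VM$ into a common ambient space; once these are in place, the identification $\bar V^{-1}\bar F=-\Phi$ is immediate from the skeleton relations.
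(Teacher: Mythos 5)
Your proof is correct. The paper itself gives no argument here: it simply cites Lemma~6.1 of the second author's earlier paper \cite{yu:ss_siegel}, so there is nothing internal to compare against. Your argument is a clean self-contained reconstruction of what that cited lemma must do. The essential moves are all sound: (i) from $FM_0=VM_0$ and $V^2M_0=pM_0$ one gets $pM_0\subset FM\subset VM_0$ and $pM_0\subset VM\subset VM_0$, so the condition $FM=VM$ is detected in the $g$-dimensional quotient $VM_0/pM_0$; (ii) $F$ and $V$ induce semilinear \emph{bijections} $\bar F,\bar V\colon M_0/VM_0\to VM_0/pM_0$ because $F^{-1}(pM_0)=V^{-1}(pM_0)=VM_0$; (iii) the skeleton relations $Fe_{g+i}=-e_i$, $Ve_{g+i}=e_i$ give $\bar V^{-1}\bar F(\bar e_{g+i})=-\bar e_{g+i}$, so $\bar V^{-1}\bar F=-\Phi$ as $\sigma^2$-semilinear maps, where $\Phi$ is the Frobenius of $V_0\otimes k$ over $V_0$; and (iv) since $\ol M$ is a $k$-subspace, $(-\Phi)$-stability is the same as $\Phi$-stability, which by Galois descent is the same as $\ol M\in{\rm Gr}(d;V_0)(\F_{p^2})$. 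Two small points worth noting in the write-up: stability $\Phi(\ol M)\subseteq\ol M$ automatically upgrades to equality by a dimension count, which is what the descent criterion needs; and the opening remark that the Dieudonné-module condition on $M$ is automatic is a nice observation but not strictly required, since the lemma already assumes $M$ is a Dieudonné module.
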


\begin{proof}
  See \cite[Lemma 6.1]{yu:ss_siegel}.
\qed
\end{proof}

\subsection{} \label{subsec34}
Let $g_\tau=
\begin{pmatrix}
  0 & I_g \\ -p I_g & 0
\end{pmatrix}$ be a representative in $\GSp_{2g}(W(k))$ for the double
coset corresponding to $\tau\in \wt W$. 
This gives rise to a point in the local model $M^{\rm
  loc}_I(k)$ described as follows (where $\ol\stdp_{i} = \stdp_i/p\stdp_i$):


\[
\begin{CD}
 \ol \stdp_{-g} @>>>\ \cdots \ @>>>  \ol \stdp_{-1} @>>>   \ol
 \Lambda_{0} \\
 \cup & & {}  & & \cup & & \cup \\
 \ol \calL_{-g} @>>>\ \cdots \ @>>>  \ol \calL_{-1} @>>>   \ol
 \calL_{0}, \\
\end{CD} \]
with
\[ \ol \calL_0=\<e_1,\dots, e_g\>,\quad \ol \calL_{-1}=\<e_1,\dots, e_{g-1},
e_{2g}\>,\]
\[  \ol \calL_{-2}=\<e_1,\dots, e_{g-2}, e_{2g-1}, e_{2g}\>,\
\dots,\ \ol \calL_{-g}=\<e_{g+1},\dots , e_{2g}\>. \]

Here, in the description of $\calL_{-i}$, $e_1,\dots, e_{2g}$ is the
standard basis of $\ol\stdp_{-i}$. In other words, we can define
$\calL_{-i}$ as the image of $\ol\stdp_{-g-i}$ in $\ol\stdp_{-i}$.
Denote by $\alpha_{i,j}: \stdp_{-j}\to \stdp_{-i}$ the
composition.
By duality we can extend the lattice chain $(\calL_{-i})_{i = 0,\dots, g}$ to
a complete periodic lattice chain $(\calL_i)_{i\in \mathbb Z}$. We then
have
\begin{equation}\label{eq:37}
\alpha_{i,g+i}(\ol \calL_{-g-i})=0,
\quad \forall\, i=0,\dots, g.
\end{equation} 

We see that
\begin{equation}\label{eq:35}
\alpha_{0,i}(\ol \stdp_{-i})^\bot=\alpha_{0,i}(\ol \calL_{-i}),
\quad \forall\, 
i=0,\dots, g,
\end{equation}
where $\bot$ stands for the orthogonal complement with respect to $\psi_0$.
Note that $\alpha_{0,i}(\ol \stdp_{-i})^\bot=\alpha_{0,g+i}(\ol
\Lambda_{-g-i})$. The condition (\ref{eq:35}) is equivalent to
\begin{equation}\label{eq:36}
\alpha_{i,g+i}(\ol \stdp_{-g-i})=\ol \calL_{-i},
\quad \forall\, i=0,\dots, g.
\end{equation} 
Conversely, the condition (\ref{eq:36}) or (\ref{eq:35}) 
characterizes whether a
point $(\ol \calL_\bullet)$ of $M^{\rm loc}_I(k)$ lies in the
minimal stratum.

\begin{prop}\label{33} 
  Let $x=(\ul A_0\to \cdots \to \ul A_g)\in \calA_I(k)$ be a geometric
  point and $(M_\bullet)=(M_{-g}\subset M_{-g+1}\subset \dots \subset
  M_0)$ be the corresponding chain of \dieu modules. Let $\ol
  {M}_{-i}:=M_{-i}/pM_0$ for $i=0,\dots, g$. Then $x\in \calA_\tau$ if and
  only if
  \begin{equation}
    \label{eq:38}
  \<\ol M_{-1}, F\ol M_{-g+1}\>_0=  \<\ol M_{-2}, F\ol M_{-g+2}\>_0
  =\dots =\<\ol
  M_{-g+1}, F\ol M_{-1}\>_0=0.
  \end{equation}
\end{prop}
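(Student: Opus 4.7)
The plan is to use the local model diagram~\eqref{eq:29} to translate the condition $x\in \calA_\tau$ into an explicit condition on the chain $(M_\bullet)$ of \dieu modules. By the local model diagram, $x\in \calA_\tau$ if and only if, for any (equivalently, some) trivialization $\xi$, the image $\xi(\omega_\bullet)\in \bfM^{\rm loc}_I(k)$ lies in the $\calG_I$-orbit of the specific point $(\ol\calL_\bullet)$ described in~\ref{subsec34}. By the characterization~\eqref{eq:36} (equivalently~\eqref{eq:37}), this orbit is cut out by the relations $\alpha_{i,g+i}(\ol\stdp_{-g-i})=\ol\calL_{-i}$ for $i=0,\dots,g$, using the extension of the lattice chain to negative indices via periodicity and the symplectic self-duality.

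Next, I would translate these conditions to the \dieu side using the identifications $H^1_{\rm DR}(A_i)=M_{-i}/pM_{-i}$ and $\omega_{A_i}=VM_{-i}/pM_{-i}$, together with the duality extension $M_{-(g+i)}:=pM_{-(g-i)}^\perp$ (orthogonal complement in $M_0\otimes \Qp$ with respect to $\psi_0$; the twist by $p$ reflects the normalization $(\alpha^g)^*\lambda_g=p\lambda_0$). Condition~\eqref{eq:36} then reads $VM_{-i}=M_{-(g+i)}$ for $i=0,\dots,g$, and since both lattices have index $p^{g+i}$ in $M_0$ (using $|\det V|=p^g$ combined with $[M_0:M_{-i}]=p^i$), equality reduces to the inclusion $VM_{-i}\subseteq M_{-(g+i)}$.

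To conclude, I would translate this inclusion into the pairing condition by means of the adjoint relation $\psi_0(Vx,y)=\psi_0(x,Fy)^{\sigma^{-1}}$ on $M_0$. The inclusion $VM_{-i}\subseteq pM_{-(g-i)}^\perp$ is equivalent to $\psi_0(VM_{-i},M_{-(g-i)})\subseteq p\Zp$, hence via the adjoint to $\psi_0(M_{-i},FM_{-(g-i)})\subseteq p\Zp$, which upon reducing mod $p$ is precisely $\<\ol M_{-i}, F\ol M_{-g+i}\>_0=0$ of~\eqref{eq:38}. As a sanity check, I would verify \eqref{eq:38} directly on the model chain $M_{-i}:=\stdp_{-i}$ with the \dieu structure~\eqref{eq3.3}, where $\ol M_{-i}=\<\ol e_1,\dots,\ol e_{2g-i}\>$ and $F\ol M_{-g+i}=\<\ol e_1,\dots,\ol e_i\>$ are orthogonal because the non-trivial pairings $\<\ol e_p,\ol e_q\>_0=\pm 1$ require $p+q=2g+1$, incompatible with $p\le 2g-i$ and $q\le i$.

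The main technical obstacle is carefully setting up the dictionary between the local-model side (with the extended chain $\stdp_{-g-i}$) and the \dieu side (with the duality extension $M_{-(g+i)}=pM_{-(g-i)}^\perp$), carrying through the twist coming from $(\alpha^g)^*\lambda_g=p\lambda_0$. A secondary subtlety is the handling of the endpoint cases $i=0$ and $i=g$, which correspond to $VM_0=M_{-g}$ and $VM_{-g}=pM_0$: these must be shown to follow from the middle orthogonality conditions via the symplectic self-duality of the chain, so that \eqref{eq:38} alone suffices to characterize $\calA_\tau$.
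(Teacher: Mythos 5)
Your approach is essentially the same as the paper's: both pass through the local-model characterization of the minimal orbit (via \eqref{eq:35}/\eqref{eq:36}), translate to the Dieudonn\'e side using the self-duality of the chain, and deduce the orthogonality condition from the adjoint relation between $F$ and $V$. Your write-up is more explicit about the index count, the endpoint cases $i=0,g$, and the $\sigma^{-1}$-twist in the adjoint formula (the paper's proof glosses over these), but the argument is the same.
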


\begin{proof}
We choose an isomorphism $M_\bullet \simeq
  \stdp_\bullet \otimes W(k)$ compatible with polarizations. We have
\[ 0=\ol M_{0}^{\bot}\subset \ol M_{-1}^{\bot} \subset\dots \subset \ol
M_{-g-1}^{\bot} \subset \ol M_{-g}^{\bot}=\ol M_{-g} \subset \dots \subset
\ol M_{0} \]
The condition (\ref{eq:35}) says that 
\[ V \ol M_{-i}=\ol M_{-g+i}^{\bot},\quad \forall\, i=0,\dots, g. \]
It follows from the discussion above
that $x\in \calA_\tau$ if and
only if the condition
   \begin{equation}
    \label{eq:39}
  \<\ol M_{-g+1}, V\ol M_{-1}\>_0=  \<\ol M_{-g+2}, V\ol M_{-2}\>_0
  =\dots =\<\ol M_{-1}, V\ol M_{-g+1}\>_0=0.
  \end{equation}
holds. The condition (\ref{eq:39}) is the same as the condition
(\ref{eq:38}). This proves the proposition. \qed
\end{proof}

\begin{lm}\label{36}
  Let $x=(\ul A_\bullet)\in \calA_I(k)$ be a geometric point and let
  $(M_\bullet)$ be the chain of associated \dieu modules.  
  Fix an element $i\in I$. Suppose that there is an isomorphism $\xi:
  M_\bullet \simeq \stdp_\bullet \otimes W(k)$, compatible with the
  polarizations, such that $\xi(V \ol M_{-i})=\ol {\calL}_{-i}$ and $\xi(V
  \ol M_{-g+i})=\ol {\calL}_{-g+i}$, where $\ol \calL_\bullet \subset \ol
  \stdp_\bullet$ is the point $g_\tau\in {\bf M}^{\rm loc}_I(\Fp)$. Then
  both $M_{-i}$ and $M_{-g+i}$ are superspecial.   
\end{lm}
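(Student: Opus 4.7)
The plan is to reduce to the criterion $V^2M_{-i}\subset pM_{-i}$, which characterizes superspeciality (since $FM/pM$ and $VM/pM$ both have $k$-dimension $g$, so $FM=VM$ is equivalent to $V^2M\subset pM$). The polarization-compatibility of $\xi$ extends it to an isomorphism $M_j\cong\stdp_j\otimes W(k)$ for all $j\in\{-g,\dots,g\}$, where $M_j:=M_{-j}^\perp$ in $M_0\otimes_W K$ for $j>0$, and similarly for the $\stdp$-side.

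First, translate the hypothesis into the lattice identity $VM_{-i}=pM_{g-i}$. Since $\ol\calL_{-i}$ is the image of $\stdp_{-g-i}=p\stdp_{g-i}$ in $\ol\stdp_{-i}$, and since $p\stdp_0\subset\stdp_{-g-i}$ for $0\le i\le g$, the hypothesis $\xi(V\ol M_{-i})=\ol\calL_{-i}$ becomes $VM_{-i}+pM_0=pM_{g-i}$ inside $M_{-i}$. A length count (using $[M_{-i}:VM_{-i}]=p^g$, $[M_{-i}:pM_{g-i}]=p^g$, and $[M_{-i}:pM_0]=p^{2g-i}$) forces $pM_0\subset VM_{-i}$, upgrading the congruence to the exact equality $VM_{-i}=pM_{g-i}$. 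Applying $F$ and using $FV=p$ gives $FM_{g-i}=M_{-i}$. The analogous treatment of the hypothesis at index $-g+i$ yields $VM_{-g+i}=pM_i$ and $FM_i=M_{-g+i}$.

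Second, compute $VM_{g-i}$ via duality. The adjoint relation $\langle Vx,y\rangle=\sigma^{-1}\langle x,Fy\rangle$ afforded by the principal polarization on $M_0$ gives $(VM_{g-i})^\perp=F^{-1}(M_{g-i}^\perp)=F^{-1}(M_{-g+i})$ in $M_0\otimes_W K$. Combined with $FM_i=M_{-g+i}$ from the previous step, this yields $F^{-1}(M_{-g+i})=M_i$, so $VM_{g-i}=M_i^\perp=M_{-i}$. Therefore $V^2M_{-i}=V(pM_{g-i})=p\,VM_{g-i}=pM_{-i}$, so $M_{-i}$ is superspecial; exchanging the roles of $i$ and $g-i$ handles $M_{-g+i}$.

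The main obstacle is the bookkeeping in the first step: one must identify carefully which ambient module $\ol M_{-i}$ lives in (namely $M_{-i}/pM_0$, as in Proposition~\ref{33}), exploit the polarization compatibility of $\xi$ to extend the chain beyond the range $[-g,0]$, and execute the length count that upgrades a congruence modulo $pM_0$ to an exact identity of lattices.
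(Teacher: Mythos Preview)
Your proof is correct and follows essentially the same route as the paper's. The paper extends the chain to $M_{-g-i}$ (the dual of $M_{-g+i}$ for the pairing $\tfrac{1}{p}\langle\,,\rangle_0$, which is your $pM_{g-i}$), reads off $M_{-g-i}=VM_{-i}$ from (\ref{eq:36}) and $VM_{-g-i}=pM_{-i}$ from (\ref{eq:37}); unpacked, the second step is exactly your duality computation $(VM_{g-i})^\perp=F^{-1}(M_{-g+i})=M_i$ using the hypothesis at index $-g+i$. One small remark: in the lemma $\ol\stdp_{-i}=\stdp_{-i}/p\stdp_{-i}$ (not $\stdp_{-i}/p\stdp_0$), so the hypothesis already gives $VM_{-i}=pM_{g-i}$ on the nose and your length-count detour through $pM_0$ is unnecessary, though harmless.
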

\begin{proof}
  Let $M_{-g-i}$ be the dual \dieu module of $M_{-g+i}$ 
  with respect to the
  pairing $\frac{1}{p}\<\, ,\>_0$. It follows from (\ref{eq:36}) that
  $M_{-g-i}=VM_{-i}$. It follows from (\ref{eq:37}) that 
  $VM_{-g-i}=pM_{-i}$. This shows $V^2M_{-i}=pM_{-i}$. Therefore,
  $M_{-i}$ is superspecial. The same argument shows that $M_{-g+i}$ is
  also superspecial. \qed
\end{proof}

\begin{thm}\label{34}
  Let $x=(\ul A_\bullet)\in \calA_I(k)$ be a geometric point and let
  $(M_\bullet)$ be the chain of associated \dieu modules. Then $x\in
  \calA_\tau$ if and only if
  \begin{enumerate}
  \item[{\rm (i)}] each $M_i$ is superspecial, and
  \item[{\rm (ii)}] the subspace $\wt M_{-i}/V \wt M_0\subset V_0$ is
    isotropic 
  with respect to $\ol \varphi_0$ for $i\ge \lceil g/2 \rceil$ and
  $\wt M_{-i}/V \wt M_0=  (\wt M_{-g+i}/V \wt M_{0})^{\bot}$ 
  with respect to  $\ol \varphi_0$ for  $i< \lceil g/2 \rceil$.
  \end{enumerate}
\end{thm}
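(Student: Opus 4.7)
The plan is to use Proposition~\ref{33} to characterize $\calA_\tau$ via the vanishing $\<\ol M_{-i}, F\ol M_{-g+i}\>_0=0$ and to translate these pairing conditions into statements in $(V_0,\ol\varphi_0)$ via the identity $\varphi_0(x,y)=\<x,Fy\>_0$.

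For the direction $x\in\calA_\tau\Rightarrow$ (i) and (ii), I first establish (i) using the local model diagram (\ref{eq:29}). Lifting $x$ through the $\calG_I$-torsor $\varphi_1\colon\wt\calA_I\to\calA_I$, the image under $\varphi_2$ lies in the single $\calG_I$-orbit $\bfM^{\rm loc}_{I,\tau}(k)$, so up to the $\calG_I$-action we obtain a polarization-compatible trivialization $\xi\colon M_\bullet\simeq\stdp_\bullet\otimes W(k)$ with $\xi(V\ol M_{-i})=\ol\calL_{-i}$ for every $i\in I$. Applying Lemma~\ref{36} once for each $i$ then forces every $M_{-i}$, and hence every $M_j$, to be superspecial, so the $W(\F_{p^2})$-skeletons $\wt M_{-i}$ are defined.

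With (i) in hand, I would translate condition (\ref{eq:38}) into (ii). Since $\varphi_0(x,y)=\<x,Fy\>_0$ and $\ol\varphi_0$ is the induced non-degenerate Hermitian form on $V_0$, the vanishing $\<\ol M_{-i},F\ol M_{-g+i}\>_0=0$ is literally $\ol\varphi_0(\wt M_{-i}/V\wt M_0,\wt M_{-g+i}/V\wt M_0)=0$ in $V_0$, for $i=1,\dots,g-1$. For $i\ge\lceil g/2\rceil$, the chain structure gives $\wt M_{-i}\subset\wt M_{-g+i}$, so this vanishing forces the isotropy of $\wt M_{-i}/V\wt M_0$. For $i<\lceil g/2\rceil$ the same vanishing yields $\wt M_{-i}/V\wt M_0\subset(\wt M_{-g+i}/V\wt M_0)^\perp$; combined with the dimension identity
\[
\dim_{\F_{p^2}}\wt M_{-i}/V\wt M_0+\dim_{\F_{p^2}}\wt M_{-g+i}/V\wt M_0=g=\dim_{\F_{p^2}}V_0,
\]
coming from the duality condition (\ref{eq:36}) built into the chain, the inclusion upgrades to equality. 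The boundary cases $i=0$ and $i=g$ are automatic.

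For the converse, assuming (i) and (ii), the orthogonal-complement equality for $i<\lceil g/2\rceil$ immediately produces $\ol\varphi_0(\wt M_{-i},\wt M_{-g+i})=0$, and Hermitian symmetry (Lemma~\ref{31}(1)) transports this to the symmetric index $g-i$; the remaining middle case ($g$ even, $i=g/2$) is covered by the isotropy statement. This recovers (\ref{eq:38}), and Proposition~\ref{33} concludes $x\in\calA_\tau$. The main technical step is the bookkeeping: computing the $\F_{p^2}$-codimensions of $\wt M_{-i}/V\wt M_0$ in $V_0$ from the chain data, matching the symmetry $i\leftrightarrow g-i$ with the cutoff $\lceil g/2\rceil$, and checking that the isotropy half and the orthogonal-complement half of (ii) together repackage the $g-1$ vanishings of (\ref{eq:38}) without redundancy or gap.
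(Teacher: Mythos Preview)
Your proposal is correct and follows the same two-step route as the paper: invoke Lemma~\ref{36} (via the local model diagram) to obtain superspeciality of all $M_{-i}$ when $x\in\calA_\tau$, and then observe that under~(i) condition~(ii) is equivalent to the pairing condition~(\ref{eq:38}) of Proposition~\ref{33}. One small wording quibble: the dimension identity you need does follow from~(\ref{eq:36}), but (\ref{eq:36}) is not a general ``duality condition built into the chain''---it holds here precisely because $x\in\calA_\tau$ (equivalently because $VM_0=M_{-g}$, which you get once the trivialization sends the Hodge filtration to $\ol\calL_\bullet$); in the converse direction you do not need this identity at all, since (ii) directly yields~(\ref{eq:38}).
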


\begin{proof}
If every $M_i$ is superspecial, then the condition (ii) is equivalent
to the condition (\ref{eq:38}). 
On the other hand, the lemma above shows that for $x =(A_i)_i \in \mathcal
A_\tau$, all the Dieudonn\'e modules of the $A_i$ are superspecial.
\qed 
\end{proof}

\begin{remark}
  We have the following variant of the characterization of the minimal KR
  stratum in the moduli space $\calA_I$. Let $x$
  and $(M_\bullet)$ be as in Theorem~\ref{34}. We extend the chain of
  \dieu modules $(M_\bullet)$ to $(M_{-i})_{0\le i\le 2g}$ using
  duality (as in the proof of Lemma~\ref{36}). Then $x\in \calA_\tau$
  if and only if $FM_{-i}=M_{-g-i}$ for all $i=0,\dots,g$.  
\end{remark}

We denote by $\Lambda_{g,1,N}$ the set of superspecial
points in the moduli space $\calA_{g,1,N}(k)$.

\begin{cor}\label{35} \

{\rm (1)} Let $x, x'$ be two points in $\calA_\tau$ and let
$(M_\bullet), (M'_\bullet)$ be the corresponding chains of \dieu modules. Then we
have an isomorphism $(M_\bullet)\simeq (M'_\bullet)$ as chains of \dieu
modules with quasi-polarizations.

{\rm (2)} We have $\# \calA_\tau(k) =\# \Lambda_{g,1,N}\cdot \#(\ol {\bf
  G}_0/B_0)(\Fp)$, where $B_0$ is any Borel subgroup of
  $\ol {\bf G}_0$ over $\Fp$.
\end{cor}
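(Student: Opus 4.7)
The plan is to reduce both parts to a transitive action of the unitary group $\ol{\bfG}_0$ on complete isotropic flags in the Hermitian space $(V_0,\ol\varphi_0)$, combining Theorem~\ref{34} with Lemma~\ref{31}(4). The key geometric input is that the ``flag data'' attached to a point of $\calA_\tau$ by Theorem~\ref{34} is precisely a full flag in a maximal isotropic $\F_{p^2}$-subspace of $V_0$, i.e.\ a point of $\ol{\bfG}_0/B_0$.

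For part (1), take $x, x' \in \calA_\tau(k)$ with chains $(M_\bullet),(M'_\bullet)$. By Theorem~\ref{34}, every $M_i, M'_i$ is superspecial, and the chain is determined by the partial flag $U_i := \wt M_{-i}/V\wt M_0 \subset V_0$ for $\lceil g/2\rceil \le i \le g$, which is a complete flag of totally isotropic $\F_{p^2}$-subspaces for $\ol\varphi_0$ (the remaining $U_i$ being forced by the duality condition in Theorem~\ref{34}(ii)). Since principally quasi-polarized superspecial Dieudonn\'e modules of rank $2g$ are all isomorphic, fix $\phi:M_0\isomarrow M'_0$; it induces an isometry $V_0\isomarrow V'_0$ of Hermitian spaces, by which we transport the flag $U'_\bullet$ into $V_0$. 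By Witt's theorem (equivalently, by the Bruhat decomposition of the quasi-split unitary group), $\ol{\bfG}_0(\F_p)$ acts transitively on complete isotropic flags in $V_0$, so some $\bar h \in \ol{\bfG}_0(\F_p)$ sends the flag of $x$ to that of $x'$. Lifting $\bar h$ to $h \in \Aut_{\rm DM}(M_0,\<\,,\>_0)$ via Lemma~\ref{31}(4), the composition $\phi\circ h$ is the required isomorphism of chains of polarized Dieudonn\'e modules.

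For part (2), consider the forgetful map $\pi_\tau\colon \calA_\tau(k) \to \Lambda_{g,1,N}$, $(\ul A_\bullet)\mapsto \ul A_0$; its image lies in $\Lambda_{g,1,N}$ by Theorem~\ref{34}(i). Surjectivity follows by reversing the recipe above: given $\ul A_0 \in \Lambda_{g,1,N}$, pick any complete isotropic flag in $V_0$, reconstruct the submodules $M_{-i}$ (using Lemma~\ref{32} to guarantee superspeciality), and apply Dieudonn\'e theory over $k$ to obtain an object of $\calA_\tau$ above $\ul A_0$. The fiber over $\ul A_0$ is thus in bijection with the set of complete isotropic flags in $V_0$: distinct flags produce non-isomorphic chains in $\calA_\tau$ because the level-$N$ structure (with $N\ge 3$) kills $\Aut(\ul A_0)$. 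This flag set is the $\F_p$-points of the homogeneous space $\ol{\bfG}_0/B_0$, where $B_0$ is a Borel (stabilizer of a full isotropic flag), giving the stated count.

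The main technical point is establishing the rigorous bijection between the fiber of $\pi_\tau$ above $\ul A_0$ and the flag variety: one needs that every complete isotropic flag genuinely arises from some chain in $\calA_\tau$ (via Lemma~\ref{32} plus Dieudonn\'e theory) and that the level-$N$ structure is strong enough to distinguish flags that are equivalent under $\Aut(A_0,\lambda_0)$. The transitivity of $\ol{\bfG}_0(\F_p)$ together with the lifting provided by Lemma~\ref{31}(4) is the substantive input making both parts go through simultaneously.
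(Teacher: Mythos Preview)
Your proposal is correct and follows essentially the same route as the paper: both parts rest on Theorem~\ref{34}, the transitivity of $\ol{\bfG}_0(\Fp)$ on complete isotropic flags in $(V_0,\ol\varphi_0)$, and the lifting from Lemma~\ref{31}(4). You spell out part~(2) in more detail than the paper (which simply says it follows from~(1) and Theorem~\ref{34}), including the role of the level-$N$ structure in ensuring that distinct flags over a fixed $\ul A_0$ give distinct moduli points, but the underlying argument is the same.
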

\begin{proof}
  (1) Since $(M_0,\<\, ,\>_0)$ and $(M'_0,\<\, ,\>'_0)$ are isomorphic
      (Theorem~\ref{34}), we can assume that $M_0=M_0'$ and
      $M_{-g}=M_{-g}'$. 
      Since the group $\ol {\bf G}_0(\Fp)$ acts transitively
      on the space of maximal chains of isotropic 
      subspaces of $V_0$ with
      respect to $\ol \varphi_0$, there is an $\ol h\in \ol {\bf
      G}_0(\Fp)$ such that $\ol h (M_{-i}/M_{-g})=(M'_{-i}/M_{-g})$ 
      for all $i$
      (Theorem~\ref{34}). Since the map $\Aut_{\rm DM}
      (M_0,\<\, ,\>_0)\to \ol {\bfG}_0(\Fp)$ is surjective
      (Lemma~\ref{31} (4)), there is an element $h\in \Aut_{\rm DM}
      (M_0,\<\, ,\>_0)$ such that $h (M_{-i})=(M'_{-i})$ for all $i$.

  (2) This follows immediately from (1) and Theorem~\ref{34}.\qed
\end{proof}

There is an explicit formula for the number $\# (\ol {\bf G}_0/B_0)(\mathbb
F_p)$, in fact, slightly more generally:

\begin{lm}\label{39}\
We have, for any $p$-power $q$, 
\[
\# (\ol {\bf G}_0/B_0)(\mathbb F_q) = \prod_{i=1}^g
\frac{1-(-q)^i}{1-(-1)^iq} = \left\{ \begin{array}{ll}
\prod_{i=1}^d
\frac{(q^{2i}-1)(q^{2i-1}+1)}{(q^2-1)} & \text{if } g=2d\text{ is even,}
\\[.3cm]
\prod_{i=1}^d
\frac{(q^{2i}-1)(q^{2i+1}+1)}{(q^2-1)} & \text{if } g=2d+1\text{ is odd.}
\end{array}\right.
\]
\end{lm}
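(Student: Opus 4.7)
The plan is to compute both $|\ol{\bf G}_0(\Fq)|$ and $|B_0(\Fq)|$ separately and form the quotient; this gives $|(\ol{\bf G}_0/B_0)(\Fq)|$ because $\ol{\bf G}_0$ is connected and Lang's theorem then guarantees that the quotient map remains surjective on $\Fq$-points, with all fibres of the same size $|B_0(\Fq)|$.

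For the numerator I would quote the classical order formula for the quasi-split unitary group,
\[ |\ol{\bf G}_0(\Fq)| = q^{g(g-1)/2}\prod_{i=1}^g(q^i-(-1)^i), \]
which is Steinberg's formula with the Frobenius on $W=S_g$ acting by conjugation by $w_0$. For the denominator, in the matrix model of Section~\ref{unit_gp} (where the Hermitian form is represented by $\wt I_g$), I would take $B_0$ to be the upper-triangular Borel and split $B_0 = T_0 \ltimes U_0$: the unipotent radical $U_0$ is $\Fq$-split of dimension $g(g-1)/2$, giving $|U_0(\Fq)|=q^{g(g-1)/2}$, while the diagonal torus consists of $\mathrm{diag}(a_1,\ldots,a_g)$ with $a_i\bar a_{g+1-i}=1$, so a short case analysis yields
\[ |T_0(\Fq)| = \begin{cases} (q^2-1)^d, & g=2d, \\ (q+1)(q^2-1)^d, & g=2d+1, \end{cases} \]
the extra $(q+1)$ in odd rank coming from the anisotropic middle coordinate, a norm-one element of $\F_{q^2}^\times$.

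Dividing, the $q^{g(g-1)/2}$-factors cancel and what remains is pure product manipulation. Grouping the factors of $\prod_{i=1}^g(q^i-(-1)^i)$ into consecutive pairs $(2k-1,2k)$ gives $\prod_{k=1}^d (q^{2k-1}+1)(q^{2k}-1)$ in even rank (and the same times an extra $(q^{2d+1}+1)$ in odd rank); dividing by $(q^2-1)^d$ (and, in the odd case, combining the leftover $(q+1)$ with the trailing $(q^{2d+1}+1)$ by reindexing) yields the two explicit piecewise formulas. The equivalence with $\prod_{i=1}^g(1-(-q)^i)/(1-(-1)^iq)$ is then an elementary check obtained by separating odd and even $i$: odd indices contribute $(1+q^i)/(1+q)$ and even indices $(1-q^i)/(1-q)$, and re-pairing reproduces $(q^{2k-1}+1)(q^{2k}-1)/(q^2-1)$. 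There is no real obstacle; the only point requiring attention is keeping track of the factor $(q+1)$ from the anisotropic torus in the odd-$g$ case.
\qed
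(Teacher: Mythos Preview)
Your argument is correct: Lang's theorem reduces the count to $|\ol{\bf G}_0(\Fq)|/|B_0(\Fq)|$, and the order formulas you quote for the quasi-split unitary group and for its upper-triangular Borel (with the anti-diagonal Hermitian form $\wt I_g$) are the standard ones; the remaining product rearrangements check out in both parities. The paper itself does not give an argument here --- it simply declares the computation standard and points to Chapter~14 of Carter's book --- so what you have written is exactly the kind of direct verification the paper omits, carried out along the expected lines.
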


\begin{proof}
This is just a computation about the unitary group over a finite field,
which we omit here.  The result can also be extracted from the general
theorems in Carter's book \cite{carter}, Chapter 14.
\qed
\end{proof}

Using the mass formula for $\#\Lambda_{g,1,N}$ due to Ekedahl and
Hashimoto-Ibukiyama (cf. \cite[Section 3]{yu:ss_siegel})
\begin{equation}
\#\Lambda_{g,1,N}=\#\Sp_{2g}(\Z/N\Z) \cdot
\frac{(-1)^{g(g+1)/2}}{2^g}  \prod_{i=1}^g [\zeta(1-2i)
  (p^i+(-1)^i)]   
\end{equation}
we get
\begin{prop}\label{310}
  \begin{equation}
    \label{eq:312}
   \#\calA_\tau(k)=\#\Sp_{2g}(\Z/N\Z)\cdot
\frac{(-1)^{g(g+1)/2}}{2^g}  \prod_{i=1}^g [\zeta(1-2i)
  (p^i+(-1)^i)]\cdot L_p 
  \end{equation}
where 
\[ L_p =
\left\{ \begin{array}{ll}
\prod_{i=1}^d
\frac{(p^{2i}-1)(p^{2i-1}+1)}{(p^2-1)} & \text{if } g=2d\text{ is even,}
\\[.3cm]
\prod_{i=1}^d
\frac{(p^{2i}-1)(p^{2i+1}+1)}{(p^2-1)} & \text{if } g=2d+1\text{ is odd.}
\end{array}\right.
\]
\end{prop}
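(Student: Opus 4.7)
\proof
The statement is obtained by straightforwardly combining three ingredients already established in the excerpt. First, Corollary~\ref{35}(2) gives the multiplicative decomposition
\[
\#\calA_\tau(k) \;=\; \#\Lambda_{g,1,N}\cdot \#(\ol{\bfG}_0/B_0)(\Fp),
\]
so the plan is simply to substitute the known explicit formulas for the two factors on the right-hand side and observe that the product matches the claimed expression.

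For the first factor, I would invoke the mass formula of Ekedahl and Hashimoto--Ibukiyama (as recalled just above the proposition, with the reference to \cite[Section~3]{yu:ss_siegel}):
\[
\#\Lambda_{g,1,N} \;=\; \#\Sp_{2g}(\Z/N\Z)\cdot
\frac{(-1)^{g(g+1)/2}}{2^g}\prod_{i=1}^g\bigl[\zeta(1-2i)(p^i+(-1)^i)\bigr].
\]
For the second factor, I would apply Lemma~\ref{39} specialized to $q=p$, which yields
\[
\#(\ol{\bfG}_0/B_0)(\Fp) \;=\; L_p,
\]
with the two cases according to the parity of $g$ exactly as displayed in the statement.

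Multiplying the two explicit expressions gives the formula in \eqref{eq:312}. There is no real obstacle here: everything needed has been proved already, and the proposition is essentially bookkeeping. The one small point worth checking is that the group $\ol{\bfG}_0$ in the setup of Subsection~\ref{unit_gp} is indeed the quasi-split unitary group in $g$ variables over $\Fp$, so that Lemma~\ref{39} applies to it with $q=p$; this is immediate from the description of $\ol\varphi_0$ in the chosen basis $\{e_{g+1},\dots,e_{2g}\}$ given there. Thus the proof reduces to a one-line combination of Corollary~\ref{35}(2), the Ekedahl/Hashimoto--Ibukiyama mass formula, and Lemma~\ref{39}.
\qed
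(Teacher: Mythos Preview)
Your proposal is correct and matches the paper's own approach exactly: the proposition is stated immediately after Corollary~\ref{35}(2), Lemma~\ref{39}, and the Ekedahl/Hashimoto--Ibukiyama mass formula, and the paper simply writes ``we get'' before stating it, indicating precisely the substitution you describe.
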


One can give similar formulas in the case of arbitrary parahoric level
structure, see \cite{goertz-yu:mass}. In Section 6, we will give similar
formulas for the numbers of connected components of some other KR strata
(which are contained in the supersingular locus).\\

\section{Supersingular KR-strata}
\label{sec:superspecial_strata}

\subsection{}

Recall that the affine Weyl group $W_a$ (of $G = GSp_{2g}$) is generated by
the simple affine reflections $s_0, \dots, s_g$. See (\ref{desc_si}) for an
explicit description.
Denote
by $\tau\in \Omega$ the length $0$ element of $\wt W$ with $t^\mu \in
W_a\tau$. We can represent it by the matrix $g_\tau$ as in Section
\ref{subsec34} (or rather its analogue over $k[[t]]$).
We use the notation for extended alcoves as introduced in Section
2, and identify the extended affine Weyl group $\wt W$ with the set of
extended alcoves. For instance, we have $\tau = (\tau_i)_i$ with $\tau_i =
(0^{(g+i)}, 1^{(g-i)})$ for $i=0,\dots, g$, $\tau_i =
(-1^{(i-g)}, 0^{(2g-(i-g))})$ for $i=g,\dots, 2g$.

\begin{defn}
For $0 \le i \le [\frac{g}{2}]$, denote by $W_{\{i, g-i\}}$ the subgroup of $W_a$
generated by all simple reflections excluding $s_i$ and $s_{g-i}$. 
\end{defn}

\begin{lm}
The map $w \mapsto w\tau$ yields a bijection
\[
W_{\{i,g-i\}} \isoto \{ x \in {\rm Adm}(\mu);\ x_i = \tau_i,\ x_{g-i} = \tau_{g-i}
\}.
\]
\end{lm}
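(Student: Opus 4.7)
Since $\wt W$ is a group, the map $w \mapsto w\tau$ is injective, and any $x \in {\rm Adm}_I(\mu)$ can be written uniquely as $x = w\tau$ with $w \in W_a$ (as ${\rm Adm}_I(\mu)\subset W_a\tau$). Hence the lemma reduces to two claims: (i) the conditions $(w\tau)_i = \tau_i$ and $(w\tau)_{g-i} = \tau_{g-i}$ are equivalent to $w \in W_{\{i,g-i\}}$; (ii) for every $w \in W_{\{i,g-i\}}$, the element $w\tau$ lies in ${\rm Adm}_I(\mu)$.

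For (i), I would compute $\Stab_{W_a}(\tau_j)$ via the diagram automorphism induced by $\tau$. Because $\tau\in\Omega$ has length $0$, conjugation by $\tau$ permutes the simple affine reflections $s_0,\dots,s_g$; since $\tau^2$ is central in $\wt W$, this permutation is an involutive diagram automorphism of the extended affine Dynkin diagram of type $\tilde C_g$. Its only nontrivial such automorphism is the flip $s_j \leftrightarrow s_{g-j}$, and a direct computation from the explicit formulas \parref{desc_si} confirms $\tau s_j\tau^{-1}=s_{g-j}$. Combined with $\Stab_{W_a}(\omega_j)=\<s_k : k\ne j\>$, this gives
\[
 \Stab_{W_a}(\tau_j) = \tau\,\Stab_{W_a}(\omega_j)\,\tau^{-1} = \<s_k : k\ne g-j\> = \Stab_{W_a}(\omega_{g-j}).
\]
Intersecting over $j\in\{i,g-i\}$ yields exactly $W_{\{i,g-i\}}$, proving (i).

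For (ii), by the theorem of Kottwitz and Rapoport (see Def.~\ref{muperm}) it suffices to verify that $w\tau$ is $\mu$-permissible, i.~e.~$\omega_j\le(w\tau)_j\le\omega_j+\mathbf 1$ componentwise for every $j$. From (i), four vertices of $w\tau$ are already pinned down: $(w\tau)_i=\tau_i=(0^{(g+i)},1^{(g-i)})$, $(w\tau)_{g-i}=\tau_{g-i}=(0^{(2g-i)},1^{(i)})$, and by the symplectic duality condition also $(w\tau)_{g+i}=\omega_i$ and $(w\tau)_{2g-i}=\omega_{g-i}$. The extended-alcove axiom (consecutive vertices differ by decreasing exactly one coordinate by $1$) then restricts the changing coordinates on each of the four intervals between these fixed vertices: on $[i,g-i]$ only positions $g+i+1,\dots,2g-i$ may decrease, on $[g-i,g+i]$ only $\{1,\dots,i\}\cup\{2g-i+1,\dots,2g\}$, on $[g+i,2g-i]$ only $\{i+1,\dots,g-i\}$, and on the wrap-around interval $[2g-i,2g]\cup[0,i]$ only $\{g-i+1,\dots,g+i\}$ (paired up by duality). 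A position-by-position check in each interval shows every intermediate vertex remains in the box $[\omega_j,\omega_j+\mathbf 1]$. The main obstacle is this bookkeeping step, but once organized along the four intervals it reduces to elementary arithmetic in each coordinate range.
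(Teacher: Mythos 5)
Your proposal is correct and follows the same two-step strategy as the paper: reduce to the set-theoretic bijection $W_{\{i,g-i\}}\isoto\{x\in\wt W:\ x_i=\tau_i,\ x_{g-i}=\tau_{g-i}\}$, then verify $\mu$-permissibility of the alcoves on the right. The only differences are in the level of detail: you spell out the bijection carefully via the diagram automorphism $\tau s_j\tau^{-1}=s_{g-j}$ and the stabilizer computation, where the paper simply calls it ``easy to see''; conversely, for the permissibility check the paper's argument is a bit cleaner than your interval-by-interval coordinate tracking --- it uses only the componentwise monotonicity of the alcove vertices to sandwich $\tau_i = x_i\le x_j\le x_{-i}=\tau_{-i}$ on $[-i,i]$ (and analogously on the other three intervals between pinned vertices, which arise from duality exactly as you note), together with the elementary bounds $\omega_j\le\tau_m$ for $j\ge m-g$ and $\tau_m\le\omega_j+\mathbf 1$ for $j\le m+g$.
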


\begin{proof}
It is easy to see that the map $w \mapsto w\tau$ induces a bijection
\[
W_{\{i,g-i\}} \isoto \{ x \in \wt W; \ x_i = \tau_i,\ x_{g-i} = \tau_{g-i}
\}.
\]
Hence it only remains to show that all elements $x$ of the set on the
right hand side are in fact $\mu$-admissible. We need to check that
$\omega_j \le x_j \le \omega_j+\mathbf 1$ for all $j$. Now we have
$\omega_j \le \tau_i$ whenever $j \ge i-g$, and $\tau_i \le
\omega_j+\mathbf 1$ whenever $j \le i+g$. Since $x_i = \tau_i$, and
$x_{g-i} = \tau_{g-i}$ by assumption, we have
\[
\omega_j \le \tau_i \le x_j \le \tau_{-i} \le \omega_j+\mathbf 1
\]
for $-i \le j \le i$, and we have similar inequalities for $i \le j \le
g-i$, $g-i\le j\le g+i$ etc.
\qed
\end{proof}

\begin{defn}\label{43}
\begin{enumerate}
\item
We call a KR stratum \emph{supersingular}, if it is contained in the
supersingular locus.
\item
For $0 \le i \le [\frac{g}{2}]$, we call a KR stratum
\emph{$i$-superspecial} if for all $k$-valued points $(A_i)_i$ in the
concerning stratum, $A_i$ and $A_{g-i}$ are superspecial, and the isogeny
$A_i \rightarrow A_{g-i}^\vee$ is isomorphic to the Frobenius morphism
$A_i\rightarrow A_i^{(p)}$, i.~e.~if there is a commutative diagram
\[
 \xymatrix{ A_i
    \ar[d]^{=} \ar[r] & A_{g-i}^\vee \ar[d]^{\cong} \\ A_i \ar[r]^{F} &
    A_i^{(p)} }
\]
\item
We call a stratum \emph{superspecial}, if it is $i$-superspecial for some
$i$.\end{enumerate}
\end{defn}
 
\begin{prop}
The KR stratum associated with $x \in {\rm Adm}(\mu)$ is $i$-superspecial if
and only if $w \in W_{\{i,g-i\}}\tau$.

In particular, for all $w \in \bigcup_i W_{\{i,g-i\}}$, the KR stratum
associated with $w\tau$ is supersingular.
\end{prop}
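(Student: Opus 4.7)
The plan is to reduce the proposition to the preceding lemma. Since that lemma identifies $W_{\{i,g-i\}}\tau$ with $\{x \in {\rm Adm}(\mu) : x_i = \tau_i,\ x_{g-i} = \tau_{g-i}\}$, it suffices to establish the equivalence: $\mathcal{A}_x$ is $i$-superspecial in the sense of Definition~\ref{43} if and only if $x_i = \tau_i$ and $x_{g-i} = \tau_{g-i}$. Granted this, the \emph{in particular} clause follows at once, because a superspecial abelian variety is supersingular and every $A_j$ in the chain is isogenous to $A_i$, and thus supersingular as well.

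First I would use the local model diagram (\ref{eq:29}) to replace the condition $(\ul A_\bullet) \in \mathcal{A}_x$ by a condition on the image $\xi(\omega_\bullet)$ of the Hodge filtration under a trivialization $\xi$. By Corollary~\ref{cor_num_char}, the coordinate $x_i$ of the alcove is determined by the dimensions of the relevant intersections of $\xi(\omega_\bullet)$ with the standard lattice chain at index $i$. Comparing with the explicit description of the $\tau$-point in Section~\ref{subsec34} (equations (\ref{eq:35})--(\ref{eq:37})), the equality $x_i = \tau_i$ is equivalent to $\xi(V \ol M_{-i}) = \ol{\mathcal{L}}_{-i}$ for a suitable $\xi$, and similarly at index $g-i$.

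Then I would invoke Lemma~\ref{36}: these two lattice equalities together force both $M_{-i}$ and $M_{-g+i}$ to be superspecial, so that $A_i$ and $A_{g-i}$ are superspecial. Moreover, relation (\ref{eq:36}) read at index $i$ shows that the kernel of the composite isogeny $A_i \to A_{g-i} \to A_{g-i}^\vee$ (the second map coming from $\lambda_{g-i}$) coincides with the kernel of the Frobenius $F \colon A_i \to A_i^{(p)}$, yielding the isomorphism of isogenies required by Definition~\ref{43}. Conversely, if the stratum is $i$-superspecial, then choosing bases of $\wt M_{-i}$ and $\wt M_{-g+i}$ as in (\ref{eq3.3}) together with the Frobenius factorization of the chain isogeny produces a trivialization in which the lattice chain at indices $\pm i$ takes exactly the $\tau$ form, giving $x_i = \tau_i$ and $x_{g-i} = \tau_{g-i}$.

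The hard part will be the bookkeeping in this last translation: checking that the condition ``$A_i \to A_{g-i}^\vee$ is isomorphic to Frobenius'' corresponds precisely to the pair of lattice equalities at indices $i$ and $g-i$, with no additional constraint on intermediate indices. This verification is direct but delicate, requiring the standard bases of Section~\ref{unit_gp}, the polarization conventions of Section~\ref{sec:02}, and the compatibility of $\xi$ with the polarizations up to scalars.
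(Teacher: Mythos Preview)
Your proposal is correct and follows essentially the same route as the paper: both reduce to the preceding lemma (identifying $W_{\{i,g-i\}}\tau$ with the alcoves having $x_i=\tau_i$ and $x_{g-i}=\tau_{g-i}$), then use that the Iwahori fixes the standard lattices to pin down $\mathcal L_i$ and $\mathcal L_{g-i}$ inside the Schubert cell, and invoke Lemma~\ref{36} for the forward direction. The paper's proof is terser---in particular it cites Lemma~\ref{36} without separately isolating the Frobenius condition (which is buried in the proof of that lemma via $M_{-g-i}=VM_{-i}$), whereas you extract it explicitly from (\ref{eq:36}); and for the converse the paper simply asserts that the $i$-superspecial hypothesis forces $\mathcal L_i=\tau\stdt_i$ and $\mathcal L_{g-i}=\tau\stdt_{g-i}$, while you propose to build an explicit trivialization from the skeleton bases of (\ref{eq3.3}). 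These are differences of exposition rather than strategy.
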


\begin{proof}
Let $w \in W_{\{i,g-i\}}$. The lemma above gives us that $(w\tau)_i = \tau_i$,
$(w\tau)_{g-i} = \tau_{g-i}$. Since the lattices of the standard
lattice chains are 
fixed by $\Iw$, this means that for all chains $(\mathcal L_j)_j$ in
the Schubert cell 
corresponding to $w\tau$, we have $\mathcal L_i = \Lambda_{i-g}$, $\mathcal
L_{g-i} = \Lambda_{-i}$. Now Lemma \ref{36} implies that the KR stratum
associated with $w\tau$ is $i$-superspecial.

On the other hand, suppose $w\in \mathop{\rm Adm}(\mu)$ gives rise to an
$i$-superspecial stratum.
It follows that for all lattice chains $(\mathcal L_j)_j$ in the
corresponding Schubert cell, $\mathcal L_i = \mathcal \stdt_{i-g} =
\tau\stdt_i$ and $\mathcal L_i =\tau\stdt_i$. 
Hence the preceding lemma yields the proposition.
\qed
\end{proof}

In \cite{goertz-yu:kreo}, we prove that every KR stratum which is
entirely contained in the supersingular locus, is superspecial. The proof
relies on exploiting the relationship between the KR stratification and the
Ekedahl-Oort stratification of $\mathcal A_g$.

\begin{thm}\label{thm_ss_KR_strata} (\cite{goertz-yu:kreo} Cor.~7.4)
If $x\in {\rm Adm}(\mu)$ gives rise to a supersingular KR stratum, then $x$
lies in $\bigcup_i W_{\{i,g-i\}}\tau$, i.~e.~the stratum corresponding to
$x$ is superspecial.
\end{thm}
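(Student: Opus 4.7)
The strategy I would adopt is to exploit the interaction between the KR stratification on $\calA_I$ and the Ekedahl--Oort (EO) stratification on $\calA_g$, via the forgetful projections $\pi_i:\calA_I\to \calA_g$ (or, for $i\ne 0,g$, to the auxiliary moduli space $\calA'_{g,p^{g-i},N}$) which send a chain to its $i$-th entry. The first step is the compatibility statement that the image $\pi_i(\calA_x)$ of a KR stratum is a union of EO strata. This is because the combinatorial data $(r_{jk}(x))$ determined by $x$ cuts out, via Corollary \ref{cor_num_char}, the Frobenius/Verschiebung structure on $M_i/pM_i$ together with the relative position of the Hodge filtration up to the action of the standard automorphism group, which is what indexes an EO class at $A_i$.

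The second step is to invoke the classification of the EO strata whose points are supersingular: by results of Oort (further analyzed by Harashita), these correspond to a specific combinatorial family of final sequences which impose a lower bound on the $a$-number. Crucially, being supersingular at the generic point of an EO stratum forces the presence of an $F$-stable superspecial sub-Dieudonn\'e module sandwiched between $VM_i$ and $M_i$; equivalently, in the lattice-chain language of Section~\ref{sec:23}, the stratum lies in the image of a Schubert cell whose alcove satisfies $V\cdot \ol{M}_{-i}=\ol{M}_{-g+i}^\perp$ for the appropriate index.

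Combining these two ingredients, I would argue by contradiction. Suppose $\calA_x\subset \calA_I$ is supersingular but $x \notin \bigcup_i W_{\{i,g-i\}}\tau$, so that for \emph{every} $i\in\{0,\dots,[g/2]\}$ one has $x_i\ne \tau_i$ or $x_{g-i}\ne \tau_{g-i}$. Translating via Corollary~\ref{cor_num_char}, this says that at no index $i$ do the inclusions forcing $M_{-i}$ (and its symplectic partner $M_{-g+i}$) to be superspecial hold simultaneously. On the other hand, the EO constraints from Step~2 applied to every projection $\pi_i$ do force such a superspecial pair to exist: the generic Dieudonn\'e module in every supersingular EO stratum admits a superspecial member at one of the symmetric indices. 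Running this over all $i$ simultaneously and using the symplectic duality $x_i(j)+x_{g-i}(2g-j+1)=c(x)-1$ of Section~\ref{sec:23} to pair up constraints, one derives a contradiction, giving $x\in \bigcup_i W_{\{i,g-i\}}\tau$ as desired.

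The main obstacle, and the step I would expect to carry the real content of the companion paper \cite{goertz-yu:kreo}, is the last combinatorial matching: aggregating the supersingularity restrictions coming from all the projections $\pi_i$ and showing that their conjunction pins down at least one symmetric pair $(x_i,x_{g-i})=(\tau_i,\tau_{g-i})$. A direct count of constraints versus unknowns is inconclusive because $\mathop{\rm Adm}(\mu)$ already encodes strong relations among the $x_i$; I would expect the cleanest path to be an induction on $i$, reducing at each stage to the smaller parahoric situation obtained by forgetting $A_i$ and $A_{g-i}$, and using Lemma~\ref{36} (together with the explicit description of $\tau$) as the base case.
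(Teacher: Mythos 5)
This theorem is not proved in the present paper: it is cited as Corollary~7.4 of the companion paper \cite{goertz-yu:kreo}, and the only indication given here is the sentence ``the proof relies on exploiting the relationship between the KR stratification and the Ekedahl--Oort stratification of $\mathcal A_g$,'' together with the remark in the introduction that the image of a KR stratum under the natural projection is a union of EO strata. Your Step~1 is thus in line with what the paper states, but it is only the easy first move, not the substance of the argument.

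Two genuine gaps remain. First, Step~2 is too strong as stated: it is not true that supersingularity of $A_i$ forces the existence of an $F$-stable \emph{superspecial} Dieudonn\'e submodule sandwiched between $V M_i$ and $M_i$ at the index $i$ in question. Supersingularity alone does not imply superspeciality at any fixed slot of the chain; the entire content of the theorem is precisely that for \emph{every} point of the stratum (uniformly in the moduli) one can force superspeciality at a symmetric pair $(i,g-i)$, and this requires comparing the combinatorics of the $\mu$-admissible set with the classification of EO strata of $p$-rank $0$ that lie entirely in the supersingular locus, not just observing that each $A_i$ is supersingular. Concretely, the generic supersingular abelian variety has $a$-number~$1$ and is nowhere near superspecial, so a pointwise lattice-theoretic argument at a single index cannot work. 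Second, and by your own admission, Step~3 — the aggregation of the constraints from all projections and the derivation of a contradiction — is exactly the missing combinatorial core. You correctly diagnose that this is ``the real content of the companion paper,'' but naming it as the place where the work lies does not supply it. What you have is a plausible outline of the strategy the authors indicate, with an overclaim in Step~2 and the decisive step unproved.
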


This theorem can also be understood as a statement about the non-emptiness
of certain affine Deligne-Lusztig varieties: for all $x \in {\rm
Adm}(\mu)\setminus \bigcup_i W_{i,g-i}\tau$, there exists a
$\sigma$-conjugacy class $[b]$ different from the supersingular class, such
that $X_x(b) \ne \emptyset$. See \cite{haines:clay}
Prop.~12.6 and \cite{ghkr} 5.10. From this point of view, one can check the
corresponding statement in the function field case (using a computer program which
evaluates foldings of galleries; cf.~loc.cit.) for $g \le 4$. With the
algorithms known to us, the case $g=5$ is out of reach.

Finally, we note the following proposition, which in particular gives a
lower bound on the dimension of the supersingular locus in $\mathcal A_I$.
In all cases where we know the latter dimension, this bound turns out to be
sharp. The bound also shows that the codimension of the supersingular locus
is much smaller in the Iwahori case than in the case of good reduction,
i.~e.~the case of $\mathcal A_g$.

\begin{prop}\label{dim_sspKR}
The dimension of the union of all superspecial KR strata is $g^2/2$, if $g$
is even, and $g(g-1)/2$, if $g$ is odd. There is a unique superspecial
stratum of this maximal dimension.
\end{prop}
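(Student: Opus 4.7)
The strategy is to reduce the proposition to a length computation inside the affine Weyl group $W_a$. Since $\tau\in\Omega$ has length zero, the KR stratum $\calA_{w\tau}$ is smooth of pure dimension $\ell(w\tau)=\ell(w)$ (by the Fact recalled in Section~\ref{sec:24}); and by the preceding proposition in Section~\ref{sec:superspecial_strata}, the indexing set of the superspecial strata is $\bigcup_{0\le i\le [g/2]} W_{\{i,g-i\}}\tau$. Thus the dimension to compute is
$$\max_{0\le i\le [g/2]}\ \max_{w\in W_{\{i,g-i\}}} \ell(w)\ =\ \max_{0\le i\le [g/2]} \ell\!\left(w_0^{(i)}\right),$$
where $w_0^{(i)}$ denotes the (unique) longest element of the finite Coxeter group $W_{\{i,g-i\}}$.

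The next step is to read off the type of each $W_{\{i,g-i\}}$ from the Coxeter diagram of $W_a$, which is $\wt C_g$: a chain $s_0,s_1,\dots,s_g$ with double bonds at both ends. Removing $s_i$ and $s_{g-i}$ leaves a subdiagram of finite type. For $i=0$, only the middle run $s_1,\dots,s_{g-1}$ survives, giving type $A_{g-1}$ and $\ell(w_0^{(0)})=g(g-1)/2$. For $i=g/2$ (possible only when $g$ is even) one drops only $s_{g/2}$, splitting the diagram into two copies of $C_{g/2}$ and yielding $\ell(w_0^{(g/2)})=2(g/2)^2=g^2/2$. For $0<i<g/2$ the complement is $C_i\sqcup A_{g-2i-1}\sqcup C_i$, with $\ell(w_0^{(i)})=2i^2+\binom{g-2i}{2}$.

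To conclude, set $f(i):=2i^2+\binom{g-2i}{2}=4i^2+(1-2g)i+\binom{g}{2}$ and note that $f''(i)=8>0$, so $f$ is strictly convex and attains its maximum over the integer interval $[0,[g/2]]$ at one of the endpoints. A short calculation gives $f(g/2)-f(0)=g/2>0$ for $g$ even and $f(0)-f((g-1)/2)=(g-1)/2>0$ for $g$ odd. Hence the maximum is $g^2/2$, attained only at $i=g/2$, when $g$ is even, and $g(g-1)/2$, attained only at $i=0$, when $g$ is odd. Uniqueness of the top-dimensional superspecial stratum then follows from uniqueness of the longest element inside each finite Coxeter group.

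The only substantive point requiring care is the type identification at the double-bonded ends of $\wt C_g$: the segment $s_0,\dots,s_{i-1}$ inherits the double bond between $s_0$ and $s_1$, so it is of type $C_i$ with longest element of length $i^2$, not of type $A_i$ with length $\binom{i+1}{2}$; symmetrically for $s_{g-i+1},\dots,s_g$. This is the one spot where a careless reading would change the answer, and it should be checked against the explicit description of $s_0$ and $s_g$ in (\ref{desc_si}).
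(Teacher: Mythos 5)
Your proof is correct and follows essentially the same route as the paper's: identify $W_{\{i,g-i\}}$ as a product $W(C_i)\times W(A_{g-2i-1})\times W(C_i)$, compute the length of its longest element as $2i^2+(g-2i)(g-2i-1)/2$, and optimize over $i\in\{0,\dots,[g/2]\}$. The only cosmetic difference is that the paper locates the vertex of the parabola by completing the square, whereas you invoke convexity and compare endpoint values directly; both yield the same conclusion, and your explicit note about the double bonds at the ends of $\wt C_g$ is exactly the subtlety that makes the computation come out right.
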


As pointed out in the introduction, comparing this dimension with the
dimension of the $p$-rank $0$ locus, one can prove that for all even $g$
(and also for $g=1$) the supersingular locus and the union of all
superspecial KR strata have the same dimension.

One should note however that this locus is not at all equidimensional. In
fact, it has $[\frac g2]$ maximal superspecial KR strata, corresponding to
the longest elements of the Weyl groups $W_{\{i,g-i\}}$. The supersingular
locus is not equidimensional either, in general (and even for $g=2$).

\begin{proof}
Let $0\le i \le [\frac g2]$. The Weyl group $W_{\{i,g-i\}}$ is isomorphic
to the product of two copies of the Weyl group of the symplectic group
$Sp_{2i}$, and one copy of the Weyl group of $SL_{g-2i}$. For the former
groups, the longest element has length $i^2$, for the latter one it has
length $(g-2i)(g-2i-1)/2$, so the longest element of $W_{\{i,g-i\}}$ has
length 
\[
2i^2 + (g-2i)(g-2i-1)/2 = (2i - \frac{2g-1}{4})^2 + \frac{4g^2-4g-1}{16}.
\]
This parabola has its global minimum at $i = \frac{g}4 - \frac 18$, so
restricted to 
the set $\{0, 1, \dots, [\frac g2] \}$, it takes its maximum at $i=0$
if $g$ 
is odd, and at $i=[\frac g2] = \frac g2$, if $g$ is even. Correspondingly,
the maximum value is $g(g-1)/2$ if $g$ is odd, and $g^2/2$ if $g$ is even.
\qed
\end{proof}

In \cite{goertz-yu:kreo}, Thm.~8.8, we prove that the dimension of the
$p$-rank $0$ locus is $[\frac{g^2}{2}]$.
We also show that every irreducible component of maximal dimension of the
union of all superspecial KR strata is actually an irreducible component of
the $p$-rank $0$ locus, and hence in particular an irreducible component of
the supersingular locus. Furthermore, if $g$ is even, then every
top-dimensional irreducible component of the supersingular locus is of this
form.

\section{Deligne-Lusztig varieties}
\label{sec:05}
\subsection{Reminder on Deligne-Lusztig varieties}

We first introduce the notation used in this section.
Let $G$ be a connected reductive group over a finite field $\mathbb F_q$.
We fix an algebraic closure $k$ of $\mathbb F_q$. Let $T \subset G$ be a
maximal torus defined over $\mathbb F_q$, and $B$ a Borel subgroup of $G$
defined over $\mathbb F_q$ and containing $T$. Denote by
$W$ the Weyl group $N_GT(k)/T(k)$. Let $\sigma$ denote the Frobenius $x
\mapsto x^q$ on $k$, and also the Frobenius on $G(k)$. Below we often
silently identify $G$, $B$, $G/B$, etc.~with their sets of $k$-valued
points.

We consider the \emph{relative position map}
\[
\mathop{\rm inv} \colon G/B \times G/B \longrightarrow W,
\]
which maps a pair $(g_1, g_2)$, $g_1, g_2\in G(k)$ to the
unique element $w$ such that $g_1^{-1} g_2 \in BwB$.
(With a little more
effort, introducing \emph{the} Weyl group of $G$ (as the projective limit
over all isomorphisms between Weyl groups for pairs $(T,B)$ as above), one
can make this independent of the choice of a torus, in some sense. For us,
working with a fixed torus is good enough, however.) We recall the definition
of Deligne-Lusztig varieties.

\begin{defn} (\cite{deligne-lusztig})
Let $w \in W$. The Deligne-Lusztig variety associated to $w$ is
\[
X(w) = \{ g \in (G/B)(k);\ \mathop{\rm inv}(g, \sigma g) = w  \}.
\]
\end{defn}

Then $X(w)$ is a locally closed subvariety of $G/B$, which is smooth
of pure dimension $\ell(w)$, the length of $w$.

\subsection{``Local model diagram'' for Deligne-Lusztig varieties}
\label{locmod_for_DL}

We use the notation of the previous section. Consider the diagram
\[
G/B \stackrel{\pi}{\longleftarrow} G \stackrel{L}{\longrightarrow} G/B,
\]
where $\pi$ is the projection, and $L$ is the concatenation of the Lang
isogeny $G \rightarrow G$, $g\mapsto g^{-1}\sigma(g)$, with the projection.
Both maps in this diagram are smooth, of the same relative dimension, and
under these maps, Deligne-Lusztig varieties and Schubert cells correspond
to each other: $\pi^{-1}(X(w)) = L^{-1}(BwB/B)$. For example, this implies
instantly that $X(w)$ is smooth of pure dimension $\ell(w)$.

In particular, we see that the singularities of the closure of $X(w)$ are
smoothly equivalent to the singularities of the Schubert variety
$\overline{BwB/B}$. This gives a simpler approach to some of the results of
Hansen \cite{hansen}.

\subsection{Connected components of Deligne-Lusztig varieties}

There is the following result by Lusztig (unpublished) about the
irreducibility (or, equivalently, connectedness) of Deligne-Lusztig
varieties; see the papers \cite{digne-michel} by Digne and Michel
(Prop.~8.4), or \cite{bonnafe-rouquier:irred} by Bonnaf\'e and Rouquier
(Theorem 2) where also the more general case of Deligne-Lusztig varieties in
$G/P$ for a parabolic subgroup $P\subset G$ is considered.  Let $S\subset W$
denote the set of simple reflections (for our choice of Borel group $B$).
For any subset $J \subseteq S$, we have the standard parabolic subgroup
$W_J \subseteq W$ generated by the elements of $J$. (Note that the notation
here, which is the usual one, differs from the notation $W_{\{i,g-i\}}$
used in the previous section.) The Frobenius $\sigma$ acts on the Weyl
group $W$.

\begin{fact}\label{52}
Let $w\in W$. The Deligne-Lusztig variety $X(w)$ is irreducible if and only
if $w$ is not contained in any proper $\sigma$-stable standard parabolic
subgroup of $W$.
\end{fact}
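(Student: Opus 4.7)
I would prove the two directions separately, with the ``only if'' direction being elementary and the ``if'' direction requiring a reduction via the standard Deligne--Lusztig relations.

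\textbf{``Only if'' direction (contrapositive).} Suppose $w$ lies in some proper $\sigma$-stable standard parabolic subgroup $W_J$, $J\subsetneq S$. Let $P_J\supset B$ be the corresponding standard parabolic subgroup of $G$, and consider the projection $\pi_J\colon G/B\to G/P_J$. For any $g\in X(w)$ one has $g^{-1}\sigma(g)\in BwB\subseteq P_J$, so $\sigma(g)P_J=gP_J$; hence $\pi_J$ restricted to $X(w)$ factors through the set $(G/P_J)^\sigma$ of $\sigma$-fixed points. By Lang--Steinberg this set is finite and identifies with $G^\sigma/P_J^\sigma$. The map is $G^\sigma$-equivariant and $G^\sigma$ acts transitively on $(G/P_J)^\sigma$, so the image contains more than one point (as $P_J\neq G$), and $X(w)$ is therefore disconnected.

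\textbf{``If'' direction.} Assume that $w$ is not contained in any proper $\sigma$-stable standard parabolic subgroup. I would proceed by induction on $\ell(w)$, using the two classical relations of Deligne--Lusztig (\cite{deligne-lusztig}, see also \cite{digne-michel}). For a simple reflection $s$:
\begin{itemize}
\item if $\ell(sw\sigma(s))=\ell(w)$, then $X(w)$ and $X(sw\sigma(s))$ are related by a $G^\sigma$-equivariant universal homeomorphism (or an \'etale cover of small degree), in particular they have the same number of connected components;
\item if $\ell(sws)<\ell(w)$, then there is a $G^\sigma$-equivariant morphism $X(w)\to X(sws)\sqcup X(sws\cdot s)$ whose fibres are isomorphic to $\mathbb{A}^1$ or $\mathbb{G}_m$, hence connected.
\end{itemize}
Using these, one reduces to the case of a minimal-length representative of the $\sigma$-conjugacy class of $w$; such a representative is still not contained in any proper $\sigma$-stable standard parabolic subgroup. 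At this point one is reduced to verifying irreducibility for ``good'' $\sigma$-twisted Coxeter-type elements, which can be done directly by writing $X(w)$ as an iterated $\mathbb{A}^1$-bundle over a torus, as in \cite{deligne-lusztig}.

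\textbf{Main obstacle.} The serious work lies entirely in the ``if'' direction. The subtle point is to maintain, throughout the inductive reductions, the hypothesis that the current element is not contained in any proper $\sigma$-stable standard parabolic subgroup; concretely, the support of $w$ (the smallest $J\subseteq S$ with $w\in W_J$) must be checked to be $\sigma$-stable and to behave well under the operations $w\mapsto sw\sigma(s)$ and $w\mapsto sws$. For a cleaner treatment I would instead follow the strategy of \cite{bonnafe-rouquier:irred}, which identifies $\pi_0(X(w))$ explicitly as a quotient $G^\sigma/H_w$ where $H_w$ is the subgroup generated by $T^\sigma$ and the $\sigma$-fixed points of the parabolic $P_{J(w)}$ attached to the support $J(w)$ of $w$; the non-containment hypothesis then reads $J(w)=S$, forcing $H_w=G^\sigma$ and hence irreducibility.
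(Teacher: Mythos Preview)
The paper does not prove this statement; it is recorded as a \emph{Fact} attributed to Lusztig (unpublished), with references to Digne--Michel and Bonnaf\'e--Rouquier for published proofs, and no argument is supplied in the paper itself. There is therefore no proof in the paper to compare your proposal against.

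Your ``only if'' argument is correct and is precisely the argument the paper later invokes (citing \cite{bonnafe-rouquier:irred}) in the proof of Corollary~\ref{num_conn_comp_of_DL}: project $X(w)\to G/P_J$, observe the image lands in the finite set $(G/P_J)^\sigma$, and use $G^\sigma$-equivariance together with transitivity of $G^\sigma$ on $(G/P_J)^\sigma$ to conclude that the image is all of $(G/P_J)^\sigma$ and hence has more than one point.

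For the ``if'' direction you have sketched a plan rather than a proof. The inductive route via the Deligne--Lusztig relations is the classical one, but the point you yourself flag as the main obstacle---preserving the support condition under the reduction moves---is exactly where the substance lies, and you have not carried it out. Falling back on the Bonnaf\'e--Rouquier identification of $\pi_0(X(w))$ is fine, but that is precisely the reference the paper already cites, so at that point you are not supplying anything beyond what the paper defers to. One minor correction: in your second bullet the relations for Deligne--Lusztig varieties involve $\sigma$-twisted conjugation, so the target pieces should involve $sw\sigma(s)$ (consistently with your first bullet) rather than $sws$.
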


The following corollary gives the number of irreducible components of an
arbitrary $X(w)$; it follows easily from the results in
\cite{bonnafe-rouquier:irred}. 

\begin{cor} \label{num_conn_comp_of_DL}
Let $w\in W$, and let $W_J$, $J\subseteq S$, be the minimal $F$-stable
standard parabolic subgroup of $W$ which contains $w$. Let $P_J = BW_JB$ be
the associated parabolic subgroup. Then the number of irreducible
components of $X(w)$ is $\# (G/P_J)(\mathbb F_q)$.
\end{cor}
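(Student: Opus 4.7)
\medbreak\noindent\textbf{Proof proposal.}
The plan is to exhibit a natural surjective morphism from $X(w)$ onto the finite discrete set $(G/P_J)(\mathbb{F}_q)$, all of whose fibers are irreducible Deligne--Lusztig varieties; the count then falls out at once.

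First I would consider the projection $\pi_J \colon G/B \to G/P_J$ and check that its restriction to $X(w)$ factors through $(G/P_J)(\mathbb F_q)$. Since $B$ is $\mathbb F_q$-rational and $J$ is $\sigma$-stable (because $W_J$ is, by definition of the minimal $\sigma$-stable standard parabolic containing $w$), the parabolic $P_J = BW_JB$ is defined over $\mathbb F_q$. If $g \in X(w)$, then $g^{-1}\sigma(g) \in BwB \subseteq BW_JB = P_J$, so $\sigma(gP_J) = gP_J$; hence $\pi_J(X(w)) \subseteq (G/P_J)(\mathbb F_q)$. Because $G(\mathbb F_q)$ acts on both sides and acts transitively on $(G/P_J)(\mathbb F_q)$ (by connectedness of $P_J$ together with Lang--Steinberg), non-emptiness of $X(w)$ forces this map to be surjective onto $(G/P_J)(\mathbb F_q)$.

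Next I would identify the fibers. By Lang--Steinberg, any point of $(G/P_J)(\mathbb F_q)$ has a representative $x \in G(\mathbb F_q)$, and since left translation by such an $x$ commutes with $\sigma$, the fiber over $xP_J$ is isomorphic to
\[
\{\,pB \in P_J/B \ ;\ \mathop{\mathrm{inv}}(pB,\sigma(p)B) = w\,\},
\]
i.e.~to the Deligne--Lusztig variety $X_{L_J}(w)$ attached to $w$ in the flag variety of the Levi quotient $L_J$ of $P_J$ (using $P_J/B \cong L_J/(L_J\cap B)$, and the fact that $w\in W_J$ is the Weyl group of $L_J$).

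The key step is then to invoke Fact \ref{52} for $L_J$: $X_{L_J}(w)$ is irreducible provided $w$ lies in no proper $\sigma$-stable standard parabolic subgroup of $W_J$. But any such subgroup would be a proper $\sigma$-stable standard parabolic subgroup of $W$ also containing $w$, contradicting the minimality of $J$. Hence every fiber of $\pi_J|_{X(w)}$ is irreducible, and since the base $(G/P_J)(\mathbb F_q)$ is finite and discrete, each fiber is both open and closed in $X(w)$. Therefore the irreducible components of $X(w)$ are in bijection with the fibers, and we conclude
\[
\#\{\text{irreducible components of }X(w)\} \;=\; \#(G/P_J)(\mathbb F_q).
\]
The only mildly delicate point is the identification of the fibers as Deligne--Lusztig varieties for $L_J$ so that Fact \ref{52} applies; once that is in place, minimality of $J$ does all the remaining work.
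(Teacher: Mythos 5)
Your proof is correct and follows essentially the same route as the paper: project $X(w)$ via $G/B\to G/P_J$ onto the finite set $(G/P_J)(\mathbb F_q)$, use $G(\mathbb F_q)$-equivariance to reduce to one fiber, identify that fiber with a Deligne--Lusztig variety for the Levi quotient $L_J$, and invoke Fact~\ref{52} together with minimality of $J$. The only cosmetic difference is that the paper argues via the maximal reductive quotient $M_J=P_J/R_u(P_J)$ and the fiber over the identity coset, whereas you phrase the same reduction via Lang--Steinberg representatives and left translation; these are interchangeable.
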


In the corollary, we allow $J=S$, in which case $P_J = G$ (and in fact
$X(w)$ is irreducible by the theorem).

\begin{proof}
As in the proof of the ``only if'' half of
\cite{bonnafe-rouquier:irred}, Theorem 2, we
consider the projection $\mathbf p \colon G/B \rightarrow G/P_J$. It
is easy to see (see loc.~cit.) that
\[
\mathbf p(X(w)) = X_J(1) := \{ g \in G/P_J(k);\ g^{-1}\sigma(g) \in
P_J \}. 
\]
Note that this proves that $X(w)$ is not connected unless $J=S$.

To prove the corollary, we need to show that the fibers of the
restriction $X(w) 
\rightarrow X_J(1)$ of $\mathbf p$ are connected. Because this morphism is
equivariant under the $G(\mathbb F_q)$-actions on both sides, it is enough
to consider the fiber over $1$. Let $\pi_J \colon P_J \rightarrow
P_J/R_u(P_J) =:M_J$ be the projection to the maximal reductive quotient of
$P_J$. The image of $T$ and $B$ under $\pi_J$ are a maximal torus and a
Borel subgroup of $M_J$. The Weyl group of $M_J$ (with respect to this
maximal torus) can be naturally identified with $W_J$. In particular, we
can consider $w$ as an element of the Weyl group of $M_J$. Since $P_J/B =
M_J/\pi_J(B)$, we have
\begin{eqnarray*}
(\mathbf p|_{X(w)})^{-1}(1) & = & \{ g \in P_J/B;\ g^{-1}\sigma(g) \in
BwB \} \\ 
& \cong & \{ g \in M_J/\pi_J(B);\ g^{-1}\sigma(g) \in \pi_J(B) w \pi_J(B)
\}
\end{eqnarray*}
Since by the definition of $J$, $w$ is not contained in any proper
$F$-stable standard parabolic subgroup of $W_J$, Fact~\ref{52} implies that
this fiber is irreducible, as we had to show.
\qed
\end{proof}

\subsection{Affineness of Deligne-Lusztig varieties}
\label{DL_affine}
We conclude by recalling some results about the affineness of
Deligne-Lusztig varieties. Haastert has shown that every $X(w)$ is
quasi-affine, see \cite{haastert:diplom} Satz 2.3.
This is proved by constructing an ample line bundle on $G/B$ whose
restriction to $X(w)$ is trivial. Deligne and Lusztig have given a
criterion of the affineness of $X(w)$ in terms of the underlying root
system. 
This implies in particular that $X(w)$ is
affine whenever the cardinality $q$ of the residue class field is greater
or equal than the Coxeter number of $G$; see \cite{deligne-lusztig}
Thm.~9.7. For further results in this direction see the recent papers by 
Orlik and Rapoport \cite{orlik-rapoport}, He \cite{he}, and by Bonnaf\'e
and Rouquier \cite{bonnafe-rouquier:a}.

\section{Geometric structure of supersingular KR strata}
\label{sec:structure_ssp_strata}

\subsection{Supersingular KR strata are disjoint unions of DL varieties}

Fix a point $(A_i)_i$ in the minimal KR stratum. We denote by
$G'$ the automorphism group of $(A_0,\lambda_0)$, an inner form of $G = Sp_{2g}$ which
splits over $\mathbb Q_{p^2}$; this group was denoted by $G_{x_0}$ in Section
3. Denote by $L$ the completion of the maximal
unramified extension of $\mathbb Q_p$. We identify $G(L) = G'(L)$, and keep
track of the difference between the two groups by means of the two
different Frobenius actions. Denote the Frobenius on $G(L)$ giving rise to
the split form by $\sigma$, and the Frobenius giving rise to $G'$ by
$\sigma' = \mathop{\rm Int}(b) \circ \sigma$, for a suitable 
$b\in GSp_{2g}(L)$.

To make things completely explicit, we identify the chain of Dieudonn\'e
modules $M(A_i)$ of the $A_i$, $i=0,\dots, g$ (inside their common
isocrystal) with the standard lattice chain $\stdp_{-i}$, $i=0,\dots, g$
(inside $V = L^{2g}$), cf.~Sections \ref{gln}, \ref{unit_gp}. As above, we
identify $G$ with $Sp(V, \langle \cdot,\cdot \rangle_0)$, where $\langle
\cdot,\cdot \rangle_0$ is given by
\[
\begin{pmatrix}
  0 & \wt I_g \\
  -\wt I_g & 0
\end{pmatrix},\quad \wt I_g=\text{anti-diag(1,\dots,1)}.
\]
With the notation of Section \ref{subsec34}, we have $b=-g_\tau$.
We can (and do) choose the identification of $M(A_i)$ with $\Lambda_{-i}$
such that the Frobenius $F$ on $M(A_i)$ corresponds to $b\sigma$ with 
\[
b = \left( \begin{array}{cc} 0 & -I_g \\ pI_g & 0  \end{array}  \right).
\]
This is consistent with the setup in Section \ref{unit_gp}; see
(\ref{eq3.3}). We have $\sigma' = \mathop{\rm Int}(b) \circ \sigma$ with
this $b$.
The Iwahori subgroup associated with our chain is the
standard Iwahori subgroup $\Iw=\Iw'$ in $G(L)=G'(L)$.

Denote by 
$I = \{ 0, \dots, g \}$ the set of vertices of the extended
Dynkin diagram (of type $\tilde{C}_g$, see the figure). 
\[
\xymatrix@R=.1cm{
    0 & 1 & 2 & & g-2 & g-1 & g \\
    \bullet \ar@2{-}[r] &  \bullet \ar@{-}[r] &  \bullet \ar@{-}[r] &
    \cdots \ar@{-}[r] &  \bullet \ar@{-}[r] &  \bullet \ar@2{-}[r] &  \bullet
}
\]
The Galois group $\mathop{\rm Gal}(\mathbb Q_{p^2}/\mathbb Q_p)$ acts on
this set, considered as the extended Dynkin diagram of $G'$. Specifically,
the non-trivial element induces the map $i \mapsto g-i$ on $I$. For each
non-empty subset $J \subseteq I$ we have the parahoric subgroup $P_J\subset G(L)$ of
$G$, which we define as the subgroup generated by the Iwahori subgroup $\Iw$ and the
affine simple reflections $s_i$, $i\not\in J$. (Note that this notation is
not the usual one (where $P_J$ would be the subgroup generated by $\Iw$ and
$s_j$ for $j\in J$) --- in particular, for us $P_I = \Iw$.)
If the subset $J$ is Galois-stable, then $P_J$ is at the same time the
underlying set of a parahoric subgroup $P'_J$ of $G'$. We denote by $\mathbf P'_J$
the corresponding smooth group scheme in the sense of Bruhat-Tits theory,
and by $\overline{G}'_J$ the maximal reductive quotient of the special
fiber of $\mathbf P'_J$. (For instance, $\ol G'_{\{0,g\}}$ is the group denoted
by $\ol{\mathbf G}_0$ in Section \ref{unit_gp}, see Lemma \ref{31}.)

The Dynkin diagram of $\overline{G}'_J$ is obtained from the extended
Dynkin diagram $\tilde{C}_g$ by deleting the vertices in $J$. The group
$\overline{G}'_J$ is not split, but splits over $\mathbb F_{p^2}$. The
Frobenius of $\mathbb F_{p^2}$ over $\mathbb F_p$ is induced from the
Frobenius $\mathop{\rm Int}(b) \circ \sigma$ on $G'(L)$. In particular, it
acts on the Dynkin diagram by $i \mapsto g-i$ (cf.~\cite{tits:corvallis}).

The diagonal maximal torus in $G$ is $\sigma'$-stable, and hence can be
considered as a (non-split) maximal torus of $G'$ over $\mathbb Q_p$. Similarly, the standard
Borel subgroup $B \subseteq G(L)$ ``is'' a Borel subgroup of $G'$.
Especially, we can identify the Weyl groups of $G$ and $G'$ with respect to
these maximal tori. The fixed maximal torus of $G'$ gives rise to a maximal
torus of $\overline{G}'_J$, and the Weyl group $\overline{W}'_J$ of
$\overline{G}'_J$ with respect to this torus is isomorphic to the parabolic
subgroup $W_J\subset W_a$ corresponding to $J$.

Let $\overline{B}'_J \subset \overline{G}'_J$ be the image of the Iwahori
group $\Iw'=P'_I\subset G'$ in $\overline{G}'_J$; this is a Borel subgroup of
$\overline{G}'_J$. We have a commutative diagram
\[
\begin{CD}
P'_I \times P'_J/P'_I @>>> \overline{B}'_J \times
\overline{G}'_J/\overline{B}'_J \\
@VVV  @VVV \\
P'_J/P'_I @>\cong>> \overline{G}'_J/\overline{B}'_J
\end{CD}\]
which shows that there is a $1:1$ correspondence between the $P'_I$-orbits
in $P'_J/P'_I$ and the $\overline{B}'_J$-orbits in
$\overline{G}'_J/\overline{B}'_J$. This correspondence is compatible with our
identification of $W_J$ and $\overline{W}'_J$.

\begin{prop} \label{maxl_ss_kr}
Let $J \subseteq I$ be a non-empty Frobenius-stable subset.
Let $\pi_{J,I} \colon \mathcal A_I \rightarrow \mathcal A_J$ be the
projection, see Section \ref{sec:24}. We have an isomorphism
\[
\pi_{J,I}^{-1}((A_i)_{i\in J})\isomarrow \overline{G}'_J /
\overline{B}'_J
\]
of schemes over $k$,
where $(A_i)_i$ is the point in the minimal KR stratum fixed above.
\end{prop}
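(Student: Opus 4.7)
The plan is to translate, via Dieudonn\'e theory, the problem of completing the partial chain $(A_i)_{i\in J}$ to a full chain in $\calA_I$ into a flag problem for the unitary group $\overline{G}'_J$. By Corollary~\ref{35}(1), the polarized chain of Dieudonn\'e modules of any point of $\calA_\tau(k)$ is isomorphic to the standard one, so we may fix an identification of the partial Dieudonn\'e chain $(M(A_i))_{i\in J}$ with $(\stdp_{-i}\otimes W(k))_{i\in J}$. Then the $k$-points of $\pi_{J,I}^{-1}((A_i)_{i\in J})$ correspond to chains of Dieudonn\'e submodules $(M_{-i})_{i\in I}$ of $\stdp_0\otimes W(k)$ extending the fixed partial chain with the correct duality and inclusion indices.

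Because the partial chain comes from a point of the minimal stratum, Lemma~\ref{36} forces every intermediate $M_{-i}$, $i\notin J$, to be superspecial, and Lemma~\ref{32} then encodes $M_{-i}$ by a subspace $M_{-i}/V\stdp_0\subseteq V_0$. Theorem~\ref{34}(ii) characterizes the valid completions as precisely the complete isotropic flags of $(V_0,\overline{\varphi}_0)$ refining the partial flag determined by $(M_{-i})_{i\in J}$. Since $\overline{G}'_J$ is the stabilizer inside the unitary group $U(V_0,\overline{\varphi}_0)$ of this partial flag, and $\overline{B}'_J$ is a Borel stabilizing a chosen complete refinement, this yields a bijection between $\pi_{J,I}^{-1}((A_i)_{i\in J})(k)$ and $(\overline{G}'_J/\overline{B}'_J)(k)$, compatible with the $\overline{G}'_J(k)$-action coming from Lemma~\ref{31}(4).

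To upgrade this bijection to an isomorphism of schemes over $k$, I would use the local model diagram~(\ref{eq:29}), applied compatibly to $\calA_I$ and $\calA_J$. Since the morphisms $\varphi_2$ are smooth and surjective and commute with the projections $\pi_{J,I}$ and $\bfM^{\rm loc}_I\to \bfM^{\rm loc}_J$, the fiber in $\calA_I$ is \'etale-locally isomorphic to the fiber of $\bfM^{\rm loc}_I\to \bfM^{\rm loc}_J$ over the $g_\tau$-point, and the latter is cut out inside a product of Grassmannians by precisely the conditions~(\ref{eq:35})--(\ref{eq:36}) that define $\overline{G}'_J/\overline{B}'_J$ as a closed subvariety of a product of classical Grassmannians. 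The main obstacle is to make this last comparison rigorous: one must match the scheme structure arising from the local model description with that arising from Bruhat--Tits theory for the group $\overline{G}'_J$. Once this is done, the resulting $\overline{G}'_J$-equivariant morphism between two smooth projective varieties of the same dimension which is bijective on $k$-points is automatically an isomorphism.
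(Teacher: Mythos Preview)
There is a genuine gap. Your use of Lemma~\ref{36}, Lemma~\ref{32}, and Theorem~\ref{34} confuses the fiber $\pi_{J,I}^{-1}((A_i)_{i\in J})$ with its intersection with the minimal stratum $\calA_\tau$. Lemma~\ref{36} requires the Hodge filtration at the index $i$ itself to match the $\tau$-point; it says nothing about $M_{-i}$ for $i\notin J$ once only the partial chain is fixed. A generic completion $(B_i)_i$ in the fiber has $B_i$ supersingular but \emph{not} superspecial for $i\notin J$ (already for $g=2$, $J=\{0,2\}$, the choices of $B_1$ form a $\mathbb P^1$, of which only the $p^2+1$ points rational over $\F_{p^2}$ are superspecial). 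Consequently the ``complete isotropic flags of $(V_0,\overline\varphi_0)$'' that Theorem~\ref{34}(ii) singles out are only the finitely many $\F_p$-points of the flag variety, i.e.\ $\calA_\tau\cap\pi_{J,I}^{-1}((A_i)_{i\in J})$, not the whole fiber.

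Even if you drop the superspeciality claim and send $(B_i)_i$ to the chain of $k$-subspaces $M(B_i)/VM(A_0)\subset M(A_0)/VM(A_0)$ (equivalently, to the images of the $H^1_{DR}(B_i)$ in $H^1_{DR}(A_{j_0})/H^1_{DR}(A_{j_1})$), you get a morphism which is bijective on $k$-points but \emph{purely inseparable}; see the Remark following the paper's proof. The paper's key move is to use the \emph{Hodge filtrations} $\omega(B_i)=V\,M(B_i)/pM(B_i)$ instead, mapping into $H^1_{DR}(A_{j_0+g})/H^1_{DR}(A_{j_1+g})$; this differs from your map by a Frobenius twist. Bijectivity on perfect-field points is then checked by inverting $V$, and separability is established via Grothendieck--Messing: deformations of the chain $(B_i)_i$ are exactly deformations of the Hodge filtration, so the morphism is an isomorphism on tangent spaces. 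Your local-model sketch, if completed, would in fact produce this Hodge-filtration morphism (since $\varphi_2$ records $\omega_\bullet$), not the Dieudonn\'e-module map you describe in the first two paragraphs; so the two halves of your plan do not match, and neither half by itself closes the gap.
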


\begin{proof}
The space on the left hand side consists of all chains $(B_i)_i\in\mathcal
A_I$ with $B_j = A_j$ for $j \in J$ (and such that the isogenies $B_j
\rightarrow B_{j'}$ for $j,j'\in J$ coincide with the fixed isogenies $A_j
\rightarrow A_{j'}$.). Let $\tilde{J} = \{ \pm j + 2g k;\ j\in J,
k\in\mathbb Z \}$. We extend the chains $(A_i)_i$, $(B_i)_i$ by duality
so that they have index set $\mathbb Z$; then $A_i=B_i$ for all
$i\in\tilde{J}$.

Denote by $\omega_i \subset H^1_{DR}(B_i)$ the Hodge filtration. The Hodge
filtration of $A_i$ is just the image of $H^1_{DR}(A_{i+g})$ in
$H^1_{DR}(A_i)$. For $j\in J$, we obtain that $\omega_j$ is the image of
$H^1_{DR}(B_{j+g})$ in $H^1_{DR}(B_j)$.

Now let $S$ be a $k$-scheme. To an $S$-valued point $(B_i)_i\in
\pi_{J,I}^{-1}((A_i)_{i\in J})$ and elements $j_0 < j_1$ of $\tilde{J}$,
such that no $i$, $j_0<i<j_1$, lies in $\tilde{J}$, we can associate the
flags
\[
0 
\subsetneq
\alpha(\omega_{j_1-1}) \subsetneq \alpha(\omega_{j_1-2}) \subsetneq \cdots \subsetneq
\alpha(\omega_{j_0+1}) \subsetneq H^1_{DR}(A_{j_0+g}) / H^1_{DR}(A_{j_1+g})
= \ol\stdp_{-j_0-g}/\ol\stdp_{-j_1-g},
\]
where by abuse of notation, for every $i$, $j_0<i<j_1$, we denote by
$\alpha(\omega_{i})$ the image of $\omega_i$ in $\omega_{j_0} /
\omega_{j_1} = H^1_{DR}(A_{j_0+g}) / H^1_{DR}(A_{j_1+g})$.
Since the number of steps is equal to the dimension of the space on the
right hand side, and because the dimension difference at each step is at
most $1$, it must indeed be equal to $1$, which means that we have strict
inclusions at each step, as indicated above.
Taking into account the periodicity and the duality conditions, it is
clear that the collection of these flags is the same as an $S$-valued point
of $\overline{G}'_J / \overline{B}'_J$. In particular, we obtain a morphism
$\pi_{J,I}^{-1}((A_i)_{i\in J})\rightarrow \overline{G}'_J /
\overline{B}'_J$ (over $k$).

Now let $K\supseteq k$ be any perfect field. We want to show that
the morphism we constructed is bijective on $K$-valued points. We use the
description of $K$-valued points of the left hand side by \dieu theory.  We
have $H^1_{DR}(A_i) = M(A_i)/p (= \stdp_{-i}/p)$.  Given a flag in
$\overline{G}'_J / \overline{B}'_J$, we can lift it to chains
\[
M(A_{j_1 +g}) \subset \omega_{j_1-1} \subset \cdots \subset \omega_{j_0+1}
\subset M(A_{j_0+g})
\]
where $j_0, j_1\in J$ are
as above.  Now Verschiebung induces a bijective $\sigma^{-1}$-linear map
$M(A_{j_0})/M(A_{j_1}) \isomarrow M(A_{j_0+g})/M(A_{j_1+g})$. We set $M_i
:= V^{-1}\omega_i$ and obtain a chain
\[
M(A_{j_1}) \subset M_{j_1-1} \subset \cdots \subset M_{j_0+1}
\subset M(A_{j_0}).
\]
Because $FM(A_{j_0}) = VM(A_{j_0}) = M(A_{j_0-g}) \subset M(A_{j_1})$, the
$M_i$ are automatically stable under $F$ and $V$, and are the unique
chain of \dieu modules such that the images under Verschiebung are the
$\omega_i$. We obtain a unique chain of abelian varieties in 
$\pi_{J,I}^{-1}((A_i)_{i\in J})(K)$ which is mapped
to the point we started with in the flag variety $\overline{G}'_J /
\overline{B}'_J(K)$. This proves that we have a bijection on $K$-valued
points. In particular, the morphism is universally bijective and,
since it is proper, is a universal homeomorphism. It follows that both sides
have the same dimension.

Now consider the induced morphism on the tangent spaces, i.~e.~on
$K[\varepsilon]/\varepsilon^2$-valued points where the underlying
$K$-valued point is fixed. By the theory of Grothendieck and Messing, a
lift of the chain of abelian varieties over $K$ to $K[\varepsilon]$
corresponds to a lift of the Hodge filtration. This means that our morphism
is an isomorphism on the tangent spaces. It follows that the left hand side
is smooth and that the morphism is separable, and hence is in fact an
isomorphism.
\qed
\end{proof}

\begin{remark}
Note that at first one could think that the map which sends a chain
$(B_i)_i$ to the flag
\[
  0\subsetneq  \alpha(H^1_{DR}(B_{j_1-1})) \subsetneq \cdots \subsetneq 
   H^1_{DR}(B_{j_0})  / H^1_{DR}(B_{j_1}) = 
   H^1_{DR}(A_{j_0}) / H^1_{DR}(A_{j_1})
\]
gives the desired isomorphism.
This is again a bijection on $K$-valued points. But the theory of
Grothendieck and Messing shows that on the tangent spaces, this map induces
the zero map; it is a purely inseparable morphism.
\end{remark}

We now analyze the restriction of this isomorphism to the intersection with
a KR stratum.

\begin{thm} \label{sskr_are_dl}
Let $J \subseteq I$ be a non-empty Frobenius-stable subset.
Let $w \in W_J$. The isomorphism 
$\pi_{J,I}^{-1}((A_i)_{i\in J})\isomarrow \overline{G}'_J /
\overline{B}'_J$ restricts to an isomorphism
\[
\mathcal A_{w\tau} \cap \pi_{J,I}^{-1}((A_i)_{i\in J}) \isomarrow X(w^{-1}).
\]
\end{thm}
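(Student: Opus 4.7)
The plan is to translate the KR-stratum condition into a Bruhat-cell condition in $\overline{G}'_J$ via Dieudonn\'e theory, and then identify this with the Deligne--Lusztig condition using the relation $\sigma'=\mathop{\rm Int}(g_\tau)\sigma$ between the two Frobenius actions on $G(L)$. First I would make the isomorphism of Proposition \ref{maxl_ss_kr} explicit in Dieudonn\'e-module language: a chain $(B_i)_i$ in the fiber amounts to a choice of intermediate lattices between the fixed $M(A_j)$ ($j\in\tilde J$), parametrized by an element $g\in P'_J$ well-defined modulo $P'_I$, so that $M(B_\bullet)=g\stdp_{-\bullet}$; its image in $\overline{G}'_J/\overline{B}'_J$ is $\bar g\overline{B}'_J$.

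Next I would describe the condition $(B_i)_i\in\calA_{w\tau}$ in terms of $g$. By the local-model description of the KR stratification, this stratum records the Iwahori double coset in $\wt W$ of the chain of Hodge filtrations $VM(B_\bullet)$ viewed inside $\stdp_{-\bullet}$. Equation~(\ref{eq3.3}) says that Verschiebung is $\sigma^{-1}$-semilinear with matrix $g_\tau$ on the basis $(e_i)$, so $VM(B_\bullet)=g_\tau\sigma^{-1}(g)\stdp_{-\bullet}$, and after returning to the standard trivialization by premultiplying with $g^{-1}$ the Hodge-filtration chain becomes $g^{-1}g_\tau\sigma^{-1}(g)\stdp_{-\bullet}$. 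Hence $(B_i)_i\in\calA_{w\tau}$ iff
\[
g^{-1}g_\tau\sigma^{-1}(g)\;\in\;\Iw\, w g_\tau\,\Iw.
\]

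Now I would apply the identity $\sigma^{-1}(g)=g_\tau\sigma'^{-1}(g)g_\tau^{-1}$, which is immediate from $\sigma'=\mathop{\rm Int}(g_\tau)\sigma$ and from $\sigma^{-1}(g_\tau)=g_\tau$. Combining this with the facts that $g_\tau$ normalizes $\Iw$ (as $\tau\in\Omega$) and that $g_\tau^{2}=-p$ is central, the displayed condition simplifies to $g^{-1}\sigma'^{-1}(g)\in\Iw w\Iw$. Projecting along the surjection $P'_J\twoheadrightarrow\overline{G}'_J$, which sends $P'_I$ onto $\overline{B}'_J$ and identifies $W_{a,J}$ with $\overline{W}'_J$, gives
\[
\bar g^{-1}\bar\sigma'^{-1}(\bar g)\;\in\;\overline{B}'_J\, w\,\overline{B}'_J.
\]
Inverting both sides and then applying $\bar\sigma'$ (and tracking its action on double cosets) converts this into $\bar g^{-1}\bar\sigma'(\bar g)\in\overline{B}'_J w^{-1}\overline{B}'_J$, which is precisely the defining condition for $\bar g\overline{B}'_J\in X(w^{-1})$.

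The main obstacle I expect is bookkeeping: keeping Verschiebung vs.\ Frobenius, the inner twist $\sigma'$ vs.\ the split Frobenius $\sigma$, left-vs.\ right-Iwahori cosets, and the inversion $w\leftrightarrow w^{-1}$ all consistent across the translation. Once the computation above is carried out on $k$-valued points, the scheme-theoretic claim follows automatically: the morphism of Proposition \ref{maxl_ss_kr} is an isomorphism of schemes, both $\calA_{w\tau}\cap\pi_{J,I}^{-1}((A_i)_{i\in J})$ and $X(w^{-1})$ are smooth locally closed subvarieties of $\overline{G}'_J/\overline{B}'_J$ of the same dimension $\ell(w)$, and a set-theoretic identification induced by an algebraic morphism between smooth subvarieties of equal dimension is an isomorphism.
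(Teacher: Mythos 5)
Your overall approach is the same as the paper's: express the KR-stratum condition via Verschiebung, trivialize to find the image in the local model, and simplify using $\sigma'=\Int(g_\tau)\circ\sigma$ together with the centrality of $g_\tau^2=-p$; your derivation of the condition $g^{-1}\sigma'^{-1}(g)\in\Iw w\Iw$ is correct. But your first step has an error that propagates to the conclusion. The isomorphism of Proposition~\ref{maxl_ss_kr} is defined through the Hodge filtrations $\omega_\bullet=VM(B_\bullet)$, not through the chain $M(B_\bullet)$ itself. If $M(B_\bullet)=g\stdp_{-\bullet}$, then $\omega_\bullet=VM(B_\bullet)=\sigma'^{-1}(g)\stdp_{-\bullet-g}$, so the image of $(B_i)_i$ in $\overline{G}'_J/\overline{B}'_J$ is $\sigma'^{-1}(\bar g)\overline{B}'_J$, not $\bar g\overline{B}'_J$. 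The assignment $(B_i)_i\mapsto\bar g\overline{B}'_J$ that you describe is exactly the ``naive'' map discussed in the Remark after Proposition~\ref{maxl_ss_kr}: it is a bijection on $k$-points but purely inseparable, hence not the isomorphism of the proposition.

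This matters in your last manipulation. Since $\overline{G}'_J$ is not split, $\sigma'$ acts nontrivially on $W_J$ (by $i\mapsto g-i$ on the Dynkin diagram), so applying $\sigma'$ gives $\sigma'(\overline{B}'_J w^{-1}\overline{B}'_J)=\overline{B}'_J\,\sigma'(w^{-1})\,\overline{B}'_J$, and with your parametrization you land in $X(\sigma'(w^{-1}))$, which is a genuinely different subvariety of the flag variety unless $w$ is $\sigma'$-fixed. With the correct parametrization $\bar h:=\sigma'^{-1}(\bar g)$, the condition $g^{-1}\sigma'^{-1}(g)\in\Iw w\Iw$ reads $\sigma'(\bar h)^{-1}\bar h\in\overline{B}'_J w\overline{B}'_J$, hence $\bar h^{-1}\sigma'(\bar h)\in\overline{B}'_J w^{-1}\overline{B}'_J$ by inversion alone, with no extra $\sigma'$ to apply --- this is exactly what the paper does. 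One further caution on your closing paragraph: a bijective algebraic morphism between smooth varieties of equal dimension need not be an isomorphism (Frobenius is a counterexample). The scheme-theoretic statement follows here because the ambient map of Proposition~\ref{maxl_ss_kr} is already an isomorphism of schemes and both $\calA_{w\tau}\cap\pi_{J,I}^{-1}((A_i)_{i\in J})$ and $X(w^{-1})$ are reduced locally closed subschemes.
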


\begin{proof}
We can check this assertion on $k$-valued points, because the KR strata, as
well as the Deligne-Lusztig varieties, are
reduced.  Let $h \in \mathbf P'_J(k)$, and let $\dot{h}\in P'_J$ be a lift
of $h$. We can describe the image point of $h$ in $P'_J /I' \cong
\overline{G}'_J / \overline{B}'_J \cong \pi^{-1}_{J,I}((A_i)_{i\in _J})$ as
follows: it is the chain $(B_i)_i$ of abelian varieties with $B_i = A_i$
for $i\in J$, and with chain of Dieudonn\'e modules $V^{-1}\dot{h} 
\stdp_{-i-g}$, $i=0,\dots, g$ (where $\stdp_{-i}$ is the Dieudonn\'e
module of $A_i$ according to our normalization fixed above). We rewrite
this as
\[
V^{-1}\dot{h}\stdp_{-i-g} = \tau^{-1}\sigma(\dot{h})\tau \stdp_{-i} =
\sigma'(\dot{h}) \stdp_{-i}.
\]
The image of this \dieu module under Verschiebung is $\dot{h}\stdp_{-i-g}$
(in fact, this was the definition of the morphism
$\pi_{J,I}^{-1}((A_i)_{i\in J})\isomarrow \overline{G}'_J /
\overline{B}'_J$, so to say).

We have an isomorphism $\psi_i \colon H^1_{DR}(B_i) = \sigma'(\dot{h})
\stdp_{-i} / p \isomarrow \stdp_{-i} / p =
\overline{\stdp}_{-i}$ given by $\sigma'(h)^{-1}$, so 
the corresponding point in the local model (which is obtained as the
image under $\psi_\bullet$ of the reduction modulo $p$ of the image of $V$) 
is
$\sigma'(h)^{-1}h \overline{\stdp}_{-i-g} = \sigma'(h)^{-1}h \tau 
\overline{\stdp}_{-i}$ (note that
here $\sigma'$ is the Frobenius on the reduction $\mathbf P'_J$ over $k$,
and $\tau$ is the element in $G(k((t)))$ which induces the shift of the
lattice chain over $k[[t]]$). 

So the element $h \in \mathbf P'_J(k)$ gives rise to an element in the KR
stratum $\mathcal A_{w\tau}$ if and only if $\sigma'(h)^{-1}h\tau \in
\Iw w\Iw\tau$ (note that $\tau$ normalizes $\Iw$). Because of the correspondence
between Iwahori orbits in $P'_J/\Iw'$ and $\overline{B}'_J$-orbits in
$\overline{G}'_J/\overline{B}'_J$ discussed above, we can reformulate this
condition as $\sigma'(h)^{-1}h \in \overline{B}'_J w
\overline{B}'_J$, or equivalently as $h^{-1}\sigma'(h) \in \overline{B}'_J
w^{-1} \overline{B}'_J$, where we denote the image of $h$ in
$(\mathbf P'_J)^{\rm red} = \overline{G}'_J$ again by $h$. 
This proves our claim.

We can subsume this discussion in the following commutative diagram over $k =
\overline{\mathbb F}_p$ (we omit the subscript $k$) extending the local
model diagram (\ref{eq:29}).
\[ 
\xymatrix{
    \overline{G}'_J / \overline{B}'_J \ar[d]^{=} & (\mathbf P'_{J})^{\rm
    red} \ar[l] \ar[r] & \mathbf P_{J}^{\rm red} & \\
    \mathbf P'_{J} / I' \ar[d]^{\cong}& \mathbf P'_{J} \ar[u] \ar[l]
    \ar[d] \ar[r]^\beta & \mathbf P_{J} \ar[u]
    \ar[d]^{\cdot\tau}\\
    \pi^{-1}_{J,I}((A_i)_{i\in J}) \ar[d] &
    \widetilde{\pi^{-1}_{J,I}((A_i)_{i\in J})} \ar[l]
    \ar[r] \ar[d] &  (\mathbf P_J/I)\tau \ar[r] \ar[d] & W_J\tau \ar[d] \\
    \mathcal A_I & \widetilde{\mathcal A}_I \ar[l] \ar[r] &  M^{\rm
    loc} \ar[r] \ar[d] & {\rm Adm}(\mu) \ar[d] \\
     & & \Flag_{GSp_{2g}} \ar[r] & \wt W
}\]
The maps in the first row are just those induced from the second row. In
the second row, $\beta$ is the map
$\mathbf P'_{J}\rightarrow \mathbf P'_{J}$, $h \mapsto \sigma'(h)^{-1}h$,
followed by our identification $\mathbf P'_{J} = \mathbf P_{J}$. The fourth
row is essentially the local model diagram, and the third row is its
restriction to $\pi_{J,I}^{-1}(x_J)$ (i.~e.~we define
$\widetilde{\pi^{-1}_{J,I}(x_J)}$ as the inverse image of
$\pi_{J,I}^{-1}(x_J)$ in $\widetilde{\mathcal A}_I$.
\qed
\end{proof}

The following lemma will allow us to put all these pieces together in order
to obtain a description of the whole stratum $\mathcal A_w$.

\begin{lm}\label{degeneration_to_ss}
Let $J \subseteq I$ be a non-empty Frobenius-stable subset, and let $w\in W_J$. Let
$\mathcal A^0_{w\tau}$ be a connected component of the KR stratum $\mathcal
A_{w\tau}$.
Then the closure of $\mathcal A^0_{w\tau}$ in $\mathcal A_I$ meets the minimal KR
stratum.
\end{lm}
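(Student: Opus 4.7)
The plan is to reduce the lemma to a statement about closures of connected components of Deligne--Lusztig varieties inside $\ol G'_J/\ol B'_J$, using Theorem~\ref{sskr_are_dl} as the bridge. First I show that the projection $\pi_{J,I}$ carries $\calA_{w\tau}$ entirely into $\pi_{J,I}(\calA_\tau)$. For $w\in W_J$ the extended alcove $w\tau$ agrees with $\tau$ at every index $j\in J$ (the lemma preceding Definition~\ref{43} generalises without change from $\{i,g-i\}$ to any Frobenius-stable $J$), so Lemma~\ref{36} forces the \dieu module $M_j$ to be superspecial for every $j\in J$ and every chain $(\ul A_\bullet)\in \calA_{w\tau}$. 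By Theorem~\ref{34}, such a partial superspecial chain $(\ul A_j)_{j\in J}$ can always be completed to a full chain in $\calA_\tau$ by inserting superspecial \dieu modules at the missing indices (the isotropic Grassmannians parametrising those insertions are non-empty over $k$). Since $\calA_\tau$ is zero-dimensional, $\pi_{J,I}(\calA_\tau)\subset \calA_J$ is finite, so the connected image $\pi_{J,I}(\calA^0_{w\tau})$ collapses to a single point $x_J$, and $\calA^0_{w\tau}\subset \pi_{J,I}^{-1}(x_J)$.

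Invoking Proposition~\ref{maxl_ss_kr} identifies $\pi_{J,I}^{-1}(x_J)$ with $\ol G'_J/\ol B'_J$, and Theorem~\ref{sskr_are_dl} matches $\calA_{w\tau}\cap \pi_{J,I}^{-1}(x_J)$ with $X(w^{-1})$ and (applied with $w=1$) $\calA_\tau\cap \pi_{J,I}^{-1}(x_J)$ with $X(1)$. Since $\pi_{J,I}$ is proper, the fiber $\pi_{J,I}^{-1}(x_J)$ is closed in $\calA_I$, so taking closure in $\calA_I$ agrees with taking closure inside $\ol G'_J/\ol B'_J$, and $\calA^0_{w\tau}$ corresponds to a connected component of $X(w^{-1})$. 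The lemma is thus reduced to the purely group-theoretic statement that the closure in $\ol G'_J/\ol B'_J$ of every connected component of $X(w^{-1})$ meets $X(1)$.

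To prove this I follow the analysis used in the proof of Corollary~\ref{num_conn_comp_of_DL}. Let $J'\subseteq S$ be the minimal $\sigma$-stable standard parabolic subset of simple reflections of $\ol G'_J$ with $w^{-1}\in W_{J'}$, and consider $\mathbf p\colon \ol G'_J/\ol B'_J \to \ol G'_J/P_{J'}$. Each connected component of $X(w^{-1})$ is the preimage under $\mathbf p$ of a single $\sigma$-fixed point $y$ of $\ol G'_J/P_{J'}$; Lang's theorem supplies an $\F_q$-rational representative for $y$ and a $\sigma$-equivariant identification of $\mathbf p^{-1}(y)$ with the flag variety of the reductive quotient $M_{J'}$ of $P_{J'}$. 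Inside this fiber our component becomes the Deligne--Lusztig variety for $w^{-1}$ in $M_{J'}$, irreducible by minimality of $J'$ (Fact~\ref{52}); its closure in $\mathbf p^{-1}(y)$ contains the corresponding $X(1)$ by the standard closure relation $\ol{X(v')}=\bigsqcup_{v\le v'} X(v)$ applied inside $M_{J'}$, and this is exactly the set of $\sigma$-fixed points of $\mathbf p^{-1}(y)$, i.e.\ the $\F_q$-rational points of a flag variety of a connected reductive group over $\F_q$, which are non-empty by Lang. Translating back through the identifications above produces a point of $\calA_\tau$ in the closure of $\calA^0_{w\tau}$. The main difficulty is this per-component closure statement: while $X(1)\subset \ol{X(w^{-1})}$ globally is automatic from the Bruhat order, isolating a closure point on each individual connected component is exactly what forces one to use the fibration structure provided by Corollary~\ref{num_conn_comp_of_DL}.
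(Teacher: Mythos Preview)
Your argument is correct and follows the same overall strategy as the paper: land $\calA^0_{w\tau}$ in a single fiber $\pi_{J,I}^{-1}(x_J)\cong \overline G'_J/\overline B'_J$ and then appeal to closure relations for Deligne--Lusztig varieties. The execution differs in two places. First, instead of proving $\pi_{J,I}(\calA_{w\tau})\subset\pi_{J,I}(\calA_\tau)$ and using connectedness plus finiteness, the paper picks a single point $z\in\calA^0_{w\tau}$ and directly identifies the partial Dieudonn\'e chain $(M(B_j))_{j\in J}$ with the standard chain $(\Lambda_{-j})_{j\in J}$ (as Dieudonn\'e modules with Frobenius $b\sigma$); the completion to a point of $\calA_\tau$ is then simply the full standard lattice chain, which is more direct than your route through Theorem~\ref{34} and isotropic Grassmannians. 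Second, the paper closes with the bare remark that in the fiber ``the closure relations of KR strata correspond to the Bruhat order, and the lemma follows'', whereas your third paragraph spells out, via the fibration of Corollary~\ref{num_conn_comp_of_DL}, why the closure of \emph{each connected component} of $X(w^{-1})$ (and not merely of $X(w^{-1})$ as a whole) meets $X(1)$; this is a point the paper leaves implicit, so your version is the more careful one here.
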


\begin{proof}
Let $z \in \mathcal A^0_{w\tau}$, and let $(B_i)_i$ be the corresponding chain of
abelian varieties. By assumption, we can identify the partial chain
$(M(B_i))_{i\in J}$ of \dieu modules for $i\in J$ with the chain
$(\stdp_{-i})_{i\in J}$, as \dieu modules, where Frobenius on $\stdp_{-i}$
is given by $b\sigma$ with $b$ as above.  Once we fix such an
identification, there exists a unique chain $(A_i)_i$ of abelian varieties
such that $A_i = B_i$ for $i\in J$ (compatibly with the isogenies between
these), and such that $M(A_i) = \stdp_{-i}$ for all $i$ (as \dieu modules).
In particular we can identify the chain $\omega(A_i) \subset H^1_{DR}(A_i)$
of Hodge filtrations with the chain $\ol\stdp_{-i-g}\subset\ol\stdp_{-i}$.
So the chain $(A_i)_i$ gives rise to a point in the minimal KR stratum,
and $z \in \pi_{J,I}^{-1}((A_i)_{i\in J})$. Since
$\pi_{J,I}^{-1}((A_i)_{i\in J})$ is isomorphic to a flag variety, where by the
preceding theorem the KR strata correspond to Deligne-Lusztig varieties, the
closure relations of KR strata correspond to the Bruhat order, and the lemma
follows.
\qed
\end{proof}

In fact more generally
we expect that whenever we take a connected component of a KR stratum, then
its closure meets the minimal KR stratum.
Altogether, we obtain the following description of supersingular KR strata:

\begin{cor} \label{desc_ss_kr_strata}
Let $J \subseteq I$ be a non-empty Frobenius-stable subset, and
let $w \in W_J$.
We have an isomorphism
\[
\mathcal A_{w\tau} \isomarrow \coprod_{x \in \pi_{J,I}(\mathcal A_\tau)}
X(w^{-1}).
\]
\end{cor}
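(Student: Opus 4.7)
The plan is to assemble the corollary from the three preceding results: Proposition~\ref{maxl_ss_kr}, Theorem~\ref{sskr_are_dl}, and Lemma~\ref{degeneration_to_ss}. The only real content beyond these ingredients is showing that the restriction $\pi_{J,I}|_{\mathcal A_{w\tau}}$ factors through the finite (zero-dimensional) subscheme $\pi_{J,I}(\mathcal A_\tau)\subseteq \mathcal A_J$; once this is in place, the fiberwise result of Theorem~\ref{sskr_are_dl} assembles everything.

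For the factorization step, I would take a $k$-point $y = (B_i)_{i\in I}\in \mathcal A_{w\tau}$ and exploit the hypothesis $w\in W_J$: for every $i\in J$ we have $(w\tau)_i = \tau_i$, so on lattice chains the Iwahori orbit at position $i\in J$ is the standard one, and the partial chain $(M(B_i))_{i\in J}$ can be identified with the standard lattice chain $(\stdp_{-i})_{i\in J}$. This is precisely the input used in the proof of Lemma~\ref{degeneration_to_ss}, and it produces a chain $(A_i)_{i\in I}\in \mathcal A_\tau(k)$ with $A_i = B_i$ for all $i\in J$. Hence $\pi_{J,I}(y) = \pi_{J,I}((A_i)_i)\in \pi_{J,I}(\mathcal A_\tau)$ on $k$-points. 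Because $\mathcal A_\tau$ is zero-dimensional, $\pi_{J,I}(\mathcal A_\tau)$ is a finite reduced closed subscheme of $\mathcal A_J$, and the set-theoretic inclusion on $k$-points is enough to conclude that $\pi_{J,I}|_{\mathcal A_{w\tau}}$ factors scheme-theoretically through $\pi_{J,I}(\mathcal A_\tau)$.

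For each $x\in \pi_{J,I}(\mathcal A_\tau)$ I would then lift $x$ to a chain $(A_i)_i\in \mathcal A_\tau$ (every such $x$ admits a lift by construction) and apply Theorem~\ref{sskr_are_dl}, which gives an isomorphism
\[
\mathcal A_{w\tau}\cap \pi_{J,I}^{-1}(x) \isomarrow X(w^{-1})
\]
inside the flag variety $\overline{G}'_J/\overline{B}'_J$. Taking the disjoint union over $x\in \pi_{J,I}(\mathcal A_\tau)$ assembles to the claimed isomorphism
\[
\mathcal A_{w\tau} \;=\; \coprod_{x\in \pi_{J,I}(\mathcal A_\tau)}\bigl(\mathcal A_{w\tau}\cap \pi_{J,I}^{-1}(x)\bigr) \;\isomarrow\; \coprod_{x\in \pi_{J,I}(\mathcal A_\tau)} X(w^{-1}).
\]

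The main obstacle is really the factorization step, which relies on the observation that the Iwahori-invariant description of the lattice at positions $i\in J$ is the same for every point of $\mathcal A_{w\tau}$. This is the precise content of the hypothesis $w\in W_J$ and is what forces the projection to $\mathcal A_J$ to be locally constant. No further smoothness or separability analysis is needed, since Theorem~\ref{sskr_are_dl} already yields an isomorphism of schemes on each fiber, and the fibers are mutually disjoint as $x$ ranges over a finite reduced set of points.
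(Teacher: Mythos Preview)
Your proposal is correct and follows essentially the same approach as the paper: assemble the fiberwise isomorphisms of Theorem~\ref{sskr_are_dl} over the finite set $\pi_{J,I}(\mathcal A_\tau)$, using the argument from the proof of Lemma~\ref{degeneration_to_ss} to show that every point of $\mathcal A_{w\tau}$ lies in one of these fibers. The paper states this last step more tersely (``the lemma above implies that the right hand side indeed is all of $\mathcal A_{w\tau}$''), relying on the reader to notice that it is the \emph{proof} of Lemma~\ref{degeneration_to_ss}, not its stated conclusion about closures, that gives the needed factorization $\pi_{J,I}(\mathcal A_{w\tau})\subseteq \pi_{J,I}(\mathcal A_\tau)$; you make this explicit, which is arguably cleaner.
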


\begin{proof}
We obtain this isomorphism by putting together all the isomorphisms of the
previous theorem. The lemma above implies that the right hand side indeed
is all of $\mathcal A_{w\tau}$.
\qed
\end{proof}

As a further consequence, we obtain that KR strata $\mathcal A_{w\tau}$
with $w$ as above are always quasi-affine, and are affine if $p \ge 2g$,
which is an upper bound for the Coxeter numbers of the groups
$\overline{G}'_{\{i,g-i\}}$ (see Section \ref{DL_affine}). We show in
\cite{goertz-yu:kreo} that all KR strata are quasi-affine.

\subsection{Number of connected components}

We compute the number of connected components of each $i$-superspecial
KR stratum. 

\begin{cor} \label{num_conn_comp}
Let $J \subsetneq I$ be a Frobenius-stable subset, and let $w \in W_J$.
We assume that $J$ is minimal with the property that it is Frobenius-stable
and $w\in W_J$. Then the number of connected components of $\mathcal
A_{w\tau}$ is
\[
\#\Lambda_{g,1,N} \cdot \#\overline{G}'_{\{0,g\}}/B'_{\{0,g\}}(\mathbb
F_p)
(\#\overline{G}'_J/\overline{B}'_J(\mathbb F_p))^{-1}.
\]
\end{cor}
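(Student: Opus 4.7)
The plan is to combine Corollary~\ref{desc_ss_kr_strata} with the irreducibility criterion for Deligne--Lusztig varieties (Fact~\ref{52}) and the point-count of the minimal stratum from Corollary~\ref{35}. Corollary~\ref{desc_ss_kr_strata} already supplies an isomorphism
\[
\mathcal A_{w\tau} \;\isomarrow\; \coprod_{x \in \pi_{J,I}(\mathcal A_\tau)} X(w^{-1}),
\]
so to count connected components it suffices to prove two things: (i) each copy of $X(w^{-1})$ is connected, and (ii) the index set $\pi_{J,I}(\mathcal A_\tau)$ has the stated cardinality.

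For (i), I will invoke the minimality hypothesis on $J$. The standard parabolic subgroups of $W_J$ (i.e., of the Weyl group of $\overline{G}'_J$) are the groups $W_{J'}$ with $J' \supseteq J$, and the Frobenius-stable ones among them are exactly those for which $J'$ itself is Frobenius-stable. The minimality of $J$ thus says that $w$, hence also $w^{-1}$, lies in no proper Frobenius-stable standard parabolic subgroup of $W_J$. Applying Fact~\ref{52} inside the flag variety $\overline{G}'_J/\overline{B}'_J$ then yields that $X(w^{-1})$ is irreducible, hence connected.

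For (ii), I will carry out a fiber count for the restriction of $\pi_{J,I}$ to $\mathcal A_\tau$. By Proposition~\ref{maxl_ss_kr}, for any $y \in \pi_{J,I}(\mathcal A_\tau)$ the fiber $\pi_{J,I}^{-1}(y)$ is isomorphic to $\overline{G}'_J/\overline{B}'_J$. Now specialize Theorem~\ref{sskr_are_dl} to $w = e \in W_J$ (so $w\tau = \tau$): the theorem identifies $\mathcal A_\tau \cap \pi_{J,I}^{-1}(y)$ with the Deligne--Lusztig variety $X(e)$, which is simply the set $(\overline{G}'_J/\overline{B}'_J)(\mathbb F_p)$ of $\mathbb F_p$-rational points of the flag variety. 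Consequently every fiber of $\pi_{J,I}|_{\mathcal A_\tau}$ is finite of the same cardinality $\#(\overline{G}'_J/\overline{B}'_J)(\mathbb F_p)$, and combining with the count
\[
\#\mathcal A_\tau(k) \;=\; \#\Lambda_{g,1,N}\cdot\#(\overline{G}'_{\{0,g\}}/\overline{B}'_{\{0,g\}})(\mathbb F_p)
\]
from Corollary~\ref{35}(2) gives
\[
\#\pi_{J,I}(\mathcal A_\tau) \;=\; \#\Lambda_{g,1,N}\cdot\#(\overline{G}'_{\{0,g\}}/\overline{B}'_{\{0,g\}})(\mathbb F_p)\cdot\bigl(\#(\overline{G}'_J/\overline{B}'_J)(\mathbb F_p)\bigr)^{-1}.
\]

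The only real friction is bookkeeping: one must check that the minimality of $J$ translates correctly into the hypothesis of Fact~\ref{52} for the (non-split) group $\overline{G}'_J$, where the Frobenius acts on the Dynkin diagram by the involution $i \mapsto g-i$, and one must be careful that Proposition~\ref{maxl_ss_kr} and Theorem~\ref{sskr_are_dl} are applied with matching identifications at the base point $x \in \pi_{J,I}(\mathcal A_\tau)$. Once those compatibilities are in place, the computation is immediate from the results already established.
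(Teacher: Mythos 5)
Your proposal is correct and follows essentially the same route as the paper: invoke Corollary~\ref{desc_ss_kr_strata} and the irreducibility of $X(w^{-1})$ (via the minimality of $J$, i.e.~Fact~\ref{52}), then count $\#\pi_{J,I}(\mathcal A_\tau)$ by identifying each fiber of $\pi_{J,I}|_{\mathcal A_\tau}$ with $(\overline{G}'_J/\overline{B}'_J)(\mathbb F_p)$ via Theorem~\ref{sskr_are_dl} applied to $w=e$, and divide the total $\#\mathcal A_\tau(k)$ from Corollary~\ref{35}(2). You spell out a few details the paper leaves implicit (the reduction of the minimality hypothesis to Lusztig's criterion, and the interpretation $X(e)=(\overline{G}'_J/\overline{B}'_J)(\mathbb F_p)$), but the argument is the same.
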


Note that $\overline{G}'_{\{0,g\}}$ is the group denoted $\overline{\mathbf
G}_{0}$ in Section 3, where explicit formulas for the first two factors
were given. We will come back to making the whole formula explicit in
\cite{goertz-yu:mass}.

\begin{proof}
By Corollary \ref{desc_ss_kr_strata}, the number of connected
components is $\# \pi_{J,I}(\mathcal A_\tau)$, because the
Deligne-Lusztig variety $X(w^{-1})$ 
is connected by our assumption on $J$; see Corollary \ref{35}. Now
each fiber of the 
map $\mathcal A_\tau \rightarrow \pi_{J,I}(\mathcal A_\tau)$ can be
identified with $\overline{G}'_J/\overline{B}'_J(\mathbb F_p)$, as we
see from 
Theorem \ref{sskr_are_dl}. Since the term in the numerator
is $\# \mathcal A_\tau(k)$ by Cor.~\ref{35}, the corollary follows.
\qed
\end{proof}

\section{The unitary case}
\label{sec:07}
It is an obvious question whether the results about supersingular KR strata
generalize to other Shimura varieties of PEL type.  We are convinced that
by the same method, one obtains a geometric description of KR strata which
are entirely contained in the basic locus (let us call these the
\emph{basic KR strata}) in other cases, too, and we intend to come back to
this question in a future paper.  For the moment, we will restrict
ourselves to pointing out that nevertheless the Siegel case is particularly
well-adapted to this method. The reason is that in general, one would
expect that the basic KR strata make up only a very small part of the basic
locus. For an extreme case, let us consider the fake unitary case,
associated to a unitary group which splits over an unramified extension of
$\mathbb Q_p$. Let $(r,s)$ be the signature of this unitary group over
$\mathbb R$. The extended Dynkin diagram is a circle with $r+s$ vertices,
and Frobenius acts on it by a shift by $r$ steps (or, depending on the
setup, by a shift by $s$ steps). Now if $r$ and $s$ are coprime, then the
only non-empty Frobenius-stable subset of the set of vertices is the set of
all vertices. As a consequence, there are no parahoric subgroups as in
Section 6.1 except for the Iwahori subgroup itself, and the only KR stratum
we get by our method is the zero-dimensional one. Assuming that the
analogue of Theorem \ref{thm_ss_KR_strata} holds, this is the only
basic KR stratum.

\begin{thank}
This paper originated while the second named author was at the
Max-Planck-Institut 
f\"ur Mathematik in Bonn. He wants to thank the MPI for its kind
hospitality and good working conditions.
We gave talks about this subject at the \emph{Arbeitsgemeinschaft
Arithmetische Geometrie} at the Universit\"at Bonn, and would like to thank
the participants who made helpful comments. In particular we thank Michael
Rapoport for his steady interest and many inspiring suggestions and
questions, as well as for his remarks on a previous version of this text.
\end{thank}

\end{document}